\documentclass[11pt]{article}

\usepackage[margin=1in]{geometry}

\usepackage[round]{natbib}

\usepackage{amsmath,amssymb,amsthm,mathtools}
\usepackage{booktabs}
\usepackage{graphicx}
\usepackage{xcolor}
\usepackage{tikz}
\usepackage{url}
\usepackage{hyperref}

\usetikzlibrary{arrows.meta,calc,positioning,fit}
\usepackage{microtype}
\usepackage{enumitem}
\newcommand{\median}{\operatorname{median}}

\newtheorem{theorem}{Theorem}
\newtheorem{proposition}[theorem]{Proposition}
\newtheorem{lemma}[theorem]{Lemma}
\newtheorem{corollary}[theorem]{Corollary}
\newtheorem{assumption}[theorem]{Assumption}
\theoremstyle{definition}

\theoremstyle{remark}
\newtheorem{remark}[theorem]{Remark}

\newcommand{\E}{\mathbb{E}}
\newcommand{\Prob}{\mathbb{P}}
\newcommand{\R}{\mathbb{R}}
\newcommand{\cG}{\mathcal{G}}
\newcommand{\cR}{\mathcal{R}}
\newcommand{\Kaff}{\mathfrak{K}^{\mathrm{aff}}_T(f;n)}
\newcommand{\ind}{\iota}
\newcommand{\prox}{\operatorname{prox}}

\definecolor{carpblue}{RGB}{45,105,160}
\definecolor{carpteal}{RGB}{42,135,125}
\definecolor{carporange}{RGB}{213,120,45}
\definecolor{carppurple}{RGB}{122,88,160}
\definecolor{carplight}{RGB}{239,244,248}
\newcounter{algorithm}
\renewcommand{\thealgorithm}{\arabic{algorithm}}
\newenvironment{algorithmblock}[1]{%
  \refstepcounter{algorithm}\begin{figure}[t]\small
  \noindent\textbf{Algorithm \thealgorithm: #1}\par\smallskip\hrule\smallskip
}{\smallskip\hrule\end{figure}}

\begin{document}

\title{Adaptive Restart under Unknown KL Geometry in Stochastic Weakly-Convex Optimization}
\author{
Binh T. Nguyen$^{1}$ \qquad
Khoa D. Nguyen$^{1}$ \qquad
Bao Thai Huynh$^{2}$\\[1em]
\small $^{1}$University of Science,
Vietnam National University Ho Chi Minh City, Vietnam\\[0.4em]
\small $^{2}$University of Information Technology,
Vietnam National University Ho Chi Minh City, Vietnam
}
\date{}
\maketitle
\begin{abstract}
KL geometry accelerates restart schedules in stochastic weakly-convex optimization, but the schedule depends jointly on an unknown and potentially nonunique geometric description. We introduce CARP-KL, a certified portfolio that adapts over valid KL descriptions while using the same primitive stochastic-subgradient oracle for both optimization and candidate selection. Over declared polynomial parameter ranges, CARP-KL competes with every affordable valid description and preserves the known-description polynomial Moreau-stationarity rate up to logarithmic factors, without objective values, an exact proximal map, direct Moreau-gradient access, or a bounded-domain assumption. Controlled experiments quantify the finite-budget price of this universality. The portfolio repeatedly contains strong candidates, while certification consumes a substantial share of the oracle budget and finite-sample ranking limits their recovery. A common conservative gap bound further reduces the portfolio from $O(\log^3 T)$ to $O(\log^2 T)$ candidates with an explicit scale-mismatch penalty. The theory and diagnostics together establish certified unknown-geometry adaptation and pinpoint selection as the principal practical frontier of the current construction.
\end{abstract}

% BEGIN sections/01_introduction
\section{Introduction}
\label{sec:intro}

Stochastic subgradient methods for general weakly-convex objectives attain
$O(T^{-1/4})$ RMS Moreau stationarity after $T$ oracle calls
\citep{davis2019stochastic}. Additional KL geometry can yield faster restart
rates. In the nonsmooth stochastic weakly-convex setting, the recent ERdA
analysis of \citet{elhishi2026restart} attains
\[
  T^{-\nu_\theta},
  \qquad
  \nu_\theta=\frac{\theta}{4\theta-1}\in[1/3,1/2],
\]
and shows substantial robustness to KL-dependent schedule misspecification.

We study a complementary adaptation problem. A KL restart schedule depends on
a description $\kappa=(\theta,c,\Delta)$ comprising the exponent, KL constant,
and an initial-gap bound. Such descriptions may be unknown and nonunique, with
different valid descriptions inducing different finite-horizon guarantees.
Moreover, after running several schedules, their outputs cannot be ranked
directly from the primitive stochastic-subgradient oracle: Moreau stationarity
would require proximal information, while objective gaps require $f^\star$.
Thus adaptation requires both competition over unknown descriptions and
observable candidate selection under the original oracle budget.

We introduce \emph{CARP-KL}, a certified adaptive restart portfolio. It uses a
one-sided grid of KL descriptions, generates their restart candidates, and
ranks them using fresh Moreau-stationarity certificates constructed from the
same primitive oracle. Candidate generation, certification, and selection are
all charged to one horizon of $T$ stochastic-subgradient calls.

\paragraph{Contributions.}
\begin{enumerate}[leftmargin=*,itemsep=1pt,topsep=2pt]

\item \textbf{Competition over nonunique KL descriptions.}
CARP-KL competes with every valid affordable
$\kappa=(\theta,c,\Delta)$ rather than estimating a distinguished ``true''
exponent. A validity-preserving one-sided grid has
$O((\log T)^3)$ candidates over polynomial parameter ranges. If a common
conservative initial-gap bound is available, the gap coordinate can instead
be shared, reducing the portfolio to $O((\log T)^2)$ candidates with an
explicit scale-mismatch penalty.

\item \textbf{Same-oracle stationarity certification.}
Using fresh projected stochastic-subgradient calls, we obtain
$O(L/\sqrt q)$ Moreau-stationarity certificates without bounded-domain,
objective-value, $f^\star$, exact-proximal, or Moreau-gradient access. Median
amplification enables simultaneous comparison of the random portfolio.

\item \textbf{One-budget adaptive oracle inequality.}
Charging optimization and certification to the same $T$ primitive calls,
CARP-KL preserves, for every fixed valid affordable description, the
known-description polynomial rate up to logarithmic factors:
\[
  \E\|\nabla f_{1/(2\rho)}(\widehat x_T)\|
  =
  \widetilde O\!\left(T^{-\theta/(4\theta-1)}\right).
\]
Thus the contribution is not a new tuned KL exponent, but an executable
same-oracle mechanism for adapting over unknown KL descriptions.
\end{enumerate}

Controlled experiments diagnose the finite-budget cost of this adaptation.
They show substantial budget-dependent behavior: reducing portfolio size
reallocates calls to optimization and certification, while certified selection
remains a principal observed finite-budget bottleneck. We therefore use the
experiments to probe the mechanisms in the oracle inequality, not to claim
empirical verification of its asymptotic exponent.
% END sections/01_introduction

% BEGIN sections/02_related_work
\section{Related Work and Positioning}
\label{sec:related}

\paragraph{Weakly-convex optimization and KL restart.}
For general stochastic weakly-convex objectives, stochastic subgradient
methods attain $O(T^{-1/4})$ RMS Moreau stationarity
\citep{davis2019stochastic}; adaptive methods such as AdaGrad address other
forms of unknown scale but do not directly exploit KL geometry
\citep{ward2019adagrad}. Restart and sharpness have long accelerated convex
nonsmooth optimization \citep{yang2018rsg}, and KL geometry yields faster
rates under additional structure \citep{fatkhullin2022sharp}. Closest to our
setting, ERdA \citep{elhishi2026restart} obtains the optimal
known-geometry polynomial rate for nonsmooth stochastic weakly-convex
optimization and shows substantial robustness to KL-dependent schedule
misspecification, including near-adaptivity with only a $\log\log T$ loss.

Our question is complementary. We consider unknown and potentially nonunique
KL descriptions and ask how to select among their candidate outputs when
objective gaps, $f^\star$, exact proximal maps, and direct Moreau gradients
are unavailable. In particular, the ERdA constant-restart prescription uses
the current Moreau-envelope gap to $f^\star$, whereas CARP-KL uses only the
primitive stochastic-subgradient oracle for both optimization and fresh
candidate certification, charging all calls to one horizon.

\paragraph{Unknown geometry and parameter-free adaptation.}
Unknown sharpness and growth parameters can be handled by search, restart, or
parallel schemes in convex optimization
\citep{roulet2017sharpness,renegar2022restart,fercoq2019adaptive,xu2017adaptive},
including joint uncertainty in several sharpness parameters
\citep{adcock2025restarts}. More general parameter-free methods use
hyperparameter search, certification, or scale adaptation
\citep{carmon2022making,attia2024free,orabona2016coin,jun2019noise}, and
adaptation itself can incur nonconstant cost
\citep{carmon2024price}. These precedents make neither restart nor unknown
parameter adaptation new by itself. The additional issue here is the
validation interface: KL descriptions need not be unique, and Moreau
stationarity is not directly observable from the primitive oracle.

\paragraph{Positioning of CARP-KL.}
CARP-KL competes directly with the best valid affordable KL description,
without requiring a distinguished ``true'' exponent to be identified.
Candidate generation, fresh certification, and final selection all use the
same primitive stochastic-subgradient oracle and are charged to a single
total budget. The resulting oracle inequality preserves the
known-description polynomial stationarity rate up to logarithmic factors.
When a common conservative initial-gap bound is available, CARP-KL shares
this bound across schedules, reducing the portfolio from
$O((\log T)^3)$ to $O((\log T)^2)$ candidates with an explicit
scale-mismatch penalty. Together, these results establish an executable,
same-oracle framework for observable competition among unknown and
potentially nonunique KL descriptions.

\paragraph{Empirical comparators.}
Our experiments include geometry-informed Oracle-KL, tuned SGD, LogSearch
motivated by parameter-free hyperparameter search \citep{attia2024free},
AdaGrad-Norm, and a Gao--Deng specialization for stochastic weakly-convex
optimization \citep{gao2024weaklyconvex}. Related methods such as U-DoG and
Free-FISTA study different convex adaptation regimes
\citep{kreisler2024udog,aujol2024freefista} and are contextual rather than
like-for-like comparators.
% END sections/02_related_work

% BEGIN sections/03_setting
\section{Setting and Why KL Adaptation Is Nontrivial}
\label{sec:setting}

Let $X\subseteq\R^d$ be nonempty, closed, and convex, possibly unbounded, and
consider
\[
  \min_{x\in X} f(x),\qquad f=g+\ind_X,
\]
where $g$ is proper, lower semicontinuous, and $\rho$-weakly convex. The
primitive stochastic oracle satisfies, uniformly on $X$,
\begin{equation}
  \E_\xi G(x,\xi)\in\partial g(x),
  \qquad
  \E_\xi\|G(x,\xi)\|^2\le L^2.
  \tag{O}\label{eq:oracle}
\end{equation}
We assume $\rho,L$, and the horizon $T$ are known. Define
\[
  F=f_{1/(2\rho)},\qquad
  S(x)=\|\nabla F(x)\|,\qquad
  \Delta_0=F(x_0)-f^\star .
\]
Writing $\lambda=1/(2\rho)$,
\[
  F(x)=\min_{y\in X}\{g(y)+\rho\|y-x\|^2\},
  \nabla F(x)=2\rho(x-\prox_{\lambda f}(x)).
\]
Weak convexity makes the proximal subproblem $\rho$-strongly convex. These
quantities define the performance criterion, not an additional oracle:
CARP-KL uses only \eqref{eq:oracle} for candidate construction and selection;
projections onto $X$ are deterministic operations.

\begin{assumption}[Common-region pointwise KL geometry]
\label{ass:klc}
There exists $\mathcal V\subseteq X$ containing $x_0$ and every iterate of
each neighboring-grid restart trajectory used in the oracle comparison. For
at least one $\kappa=(\theta,c,\Delta)$ with
\[
  \theta\in[1/2,1],\qquad c>0,\qquad
  0<\Delta_0\le\Delta\le1,
\]
we have
\[
  S(x)\ge c(F(x)-f^\star)^\theta
  \qquad\text{for all }x\in\mathcal V .
\]
\end{assumption}

The normalization $\Delta\le1$ fixes the safe discretization directions:
along the expected-gap restart induction, increasing $\theta$, decreasing
$c$, and increasing the gap upper bound preserve the recursion. This is an
expectation-level property and does not assert a realized gap bound.
The common-region condition is needed because neighboring descriptions induce
different random trajectories; it is automatic when the KL inequality holds
globally on $X$. CARP-KL does not test region membership.

\paragraph{Comparator class.}
A triple $\kappa=(\theta,c,\Delta)$ is a \emph{valid KL description} if it
satisfies the parameter restrictions above and the KL inequality on
$\mathcal V$, including the common-region requirement for its neighboring
grid trajectory. An objective may admit many valid descriptions, so our
comparator is a class of descriptions rather than a distinguished parameter.
The algorithm receives declared ranges
\[
  \theta\in[1/2,1],\qquad
  c\in[c_{\min,T},c_{\max,T}],\qquad
  \Delta\in[\Delta_{\min,T},1],
\]
with
$
  c_{\max,T}/c_{\min,T}\le T^{r_c},
$ $
  \Delta_{\min,T}=T^{-r_\Delta},
$ and $
  r_c,r_\Delta\ge0,
$
where $r_c$ and $r_\Delta$ are fixed range-growth exponents. These
polynomial ranges ensure logarithmic grid sizes. The declared box may contain
invalid descriptions; validity is an analytical comparator property, not
algorithmic input.

A valid description is additionally \emph{affordable} when its relevant
first-stage cost fits the per-candidate training cap, as formalized in
Section~\ref{sec:results}. For every fixed admissible description this
restriction eventually disappears as $T\to\infty$.

\paragraph{Training and certification.}
Candidate outputs are measurable with respect to a training sigma-field
$\mathcal T$; certification uses fresh samples conditional on $\mathcal T$.
This yields simultaneous conditional concentration without requiring
independence among candidate training runs.

\subsection{Why adaptation is nontrivial}
\label{sec:setting:failure}

Four issues make adaptation nontrivial. First, while $\theta$ determines the
asymptotic exponent, $c$ and $\Delta$ control finite-horizon restart scales:
optimistic values can invalidate the recursion, whereas conservative ones can
be unaffordable. Adaptation must therefore cover the joint description
$(\theta,c,\Delta)$.

Second, discretization must preserve the restart recursion. For gaps in
$(0,1)$, the safe directions are $\theta$ upward, $c$ downward, and $\Delta$
upward, so ordinary nearest-neighbor rounding is insufficient.

Third, candidate quality is not directly observable from \eqref{eq:oracle}:
objective gaps require $f^\star$, while $S(x)$ requires unavailable
Moreau-gradient or exact-proximal information. CARP-KL instead uses fresh
same-oracle stationarity certificates.

Finally, all adaptation must fit within the original horizon. CARP-KL charges
candidate construction, certification, and selection to at most $T$ primitive
calls, rather than granting each candidate a separate budget or a stronger
validation oracle.

The resulting oracle comparator therefore ranges over \emph{valid and
affordable} KL descriptions: those preserving the required recursion and
supporting an executable finite-horizon restart guarantee.
% END sections/03_setting

% BEGIN sections/04_algorithm
\section{CARP-KL}
\label{sec:algorithm}

CARP-KL turns the oracle-competition problem of
Section~\ref{sec:setting} into two coupled operations: construct a portfolio
that contains a useful one-sided approximation to every affordable valid KL
description, and select among the resulting random candidates using only the
primitive oracle. Both operations share the same horizon.

\paragraph{One-sided portfolio.}
Let
\[
  \eta_T=\frac{1}{\lceil\log(eT)\rceil}.
\]
We round $\theta$ upward on an $\eta_T$-mesh, $c$ downward on a dyadic grid,
and $\Delta$ upward on a dyadic grid. Let $\cG_T$ denote the product of these
declared grids and let $K_T=|\cG_T|$. Under the polynomial endpoint ranges of
Section~\ref{sec:setting},
\[
  K_T=O((\log T)^3).
\]
The directions of rounding are chosen to preserve the restart recursion, not
merely to approximate the numerical values of $(\theta,c,\Delta)$.

For confidence level $\delta\in(0,1/2)$, CARP-KL allocates half of the
primitive-call horizon to candidate construction and half to certification,
and shares each half uniformly across the portfolio:
\begin{align*}
  n=m&=\left\lfloor\frac{T}{2K_T}\right\rfloor,
  B_K=2\left\lceil4\log(K_T/\delta)\right\rceil+1,\\
  q_T&=\left\lfloor\frac{m}{B_K}\right\rfloor.
\end{align*}

\begin{algorithmblock}{CARP-KL: certified adaptive restart portfolio}
\label{alg:carp}
\textbf{Input:} $x_0,T,\delta,\rho,L$ and declared ranges for
$(\theta,c,\Delta)$.\\
1. Construct the one-sided joint grid $\cG_T$.\\
2. For every $\kappa_j\in\cG_T$, execute each complete tuned restart stage
whose prescribed calls fit under cap $n$, and return the last completed output
$x_j$ (or $x_0$ if no stage fits).\\
3. For each $x_j$, use fresh samples to run $B_K$ independent certification
blocks of $q_T$ calls. Let
\[
  Z_{j,b}=2\rho\|x_j-y_{q_T}^{(j,b)}\|,
  \qquad
  \widehat S_j=\median_{1\le b\le B_K} Z_{j,b}.
\]
4. Choose
\[
  \widehat j\in
  \arg\min_{1\le j\le K_T}
  \bigl\{\widehat S_j+r_{q_T}\bigr\},
\]
and return $\widehat x_T=x_{\widehat j}$.
\end{algorithmblock}

\paragraph{Candidate construction.}
For a grid description $\kappa_j$, Step~2 executes restart stages atomically:
a stage begins only if its full prescribed integer length fits within the
training cap $n$. Hence, every grid point has a well-defined returned
candidate, including $x_0$ when no stage fits. CARP-KL never tests whether
$\kappa_j$ is valid. Invalid or unaffordable descriptions may generate poor
outputs; the analysis requires only that the portfolio contain the
recursion-valid affordable neighbor of each comparator description.

\paragraph{Same-oracle certification.}
For certification block $b$ centered at $x_j$, initialize
$y_0^{(j,b)}=x_j$ and iterate
\[
\begin{aligned}
  y_{t+1}^{(j,b)}
  =
  \Pi_X\!\Biggl[
    y_t^{(j,b)}
    -\frac{2}{\rho(t+32)}
    \Bigl(
      &G(y_t^{(j,b)},\xi_t) \\
      &+2\rho\bigl(y_t^{(j,b)}-x_j\bigr)
    \Bigr)
  \Biggr].
\end{aligned}
\]
for $t=0,\ldots,q_T-1$. The block output
\[
  Z_{j,b}
  =
  2\rho\|x_j-y_{q_T}^{(j,b)}\|
\]
approximates
\[
  S(x_j)
  =
  2\rho
  \bigl\|x_j-\prox_{f/(2\rho)}(x_j)\bigr\|.
\]
The proximal point on the right is never evaluated by the algorithm; it
defines the target estimated through the stochastic proximal subproblem.
With
\[
  r_q=\frac{8\sqrt{66}\,L}{\sqrt{q+32}},
\]
the median construction yields simultaneous candidate-wise confidence
intervals from fresh primitive-oracle calls.

On this simultaneous certificate event, the upper-confidence selector obeys
the deterministic comparison
$
  S(x_{\widehat j})
  \le
  S(x_j)+2r_{q_T}
$, $
  \text{for every }j.
$
Thus, selection reduces the adaptive problem to two quantities: the quality of
the best candidate generated by the portfolio and the resolution
$r_{q_T}$ with which the same oracle can distinguish among candidates.

\paragraph{Exact primitive-call accounting.}
The portfolio and its validation fit inside the original horizon. Since
$B_Kq_T\le m$,
\begin{equation}
  K_Tn+K_TB_Kq_T
  \le
  K_T(n+m)
  \le T.
  \label{eq:budget}
\end{equation}
Unused restart-cap calls and certification remainders are deliberately not
reassigned. Consequently, every term in the final oracle inequality
corresponds to calls available to the stated algorithm rather than to an
external validation resource.

The certification blocks are independent across $b$ conditional on the
training sigma-field $\mathcal T$; the candidate training runs themselves need
not be independent. This is the only sample-splitting condition required by
the selection argument. The construction is horizon-aware and assumes
$q_T\ge1$, equivalently the finite-budget condition $m\ge B_K$ appearing in
Theorem~\ref{thm:master}.

\subsection{Proof-critical design choices}
\label{sec:algorithm:invariants}

The following choices are part of the theoretical construction rather than
implementation conventions.

\paragraph{Atomic restart stages.}
The external cap is not a stopping time within a restart stage. Truncating a
stage when the cap is reached would change the output rule controlled by the
base restart lemma. CARP-KL therefore checks the prescribed stage length
before execution and runs only complete stages; residual calls may remain
unused.

\paragraph{Validity-agnostic execution.}
The product grid intentionally contains descriptions that may not satisfy the
KL inequality for the current objective. CARP-KL executes them without a
validity test. The oracle analysis identifies a recursion-valid neighboring
description only for comparison, and does not require the remaining portfolio
members to be valid. This is essential to competition over a nonunique
description class: the algorithm searches outputs, not KL parameters.

\paragraph{Fresh conditional certification.}
After candidate construction, condition on the entire trained portfolio.
Each certification block then has a fixed center $x_j$ and fresh oracle
variables. Blocks may be executed in parallel, but the present proof does not
reuse training samples for certification. The odd block count $B_K$ ensures
that whenever a strict majority of blocks are accurate, their scalar median is
accurate as well.

\paragraph{Scalar rather than vector certification.}
Selection requires only the stationarity magnitude $S(x_j)$, not the direction
of the Moreau gradient. Estimating the scalar distance
$
  2\rho\|x_j-y^\star_j\|
$
therefore avoids dimension-dependent vector concentration. This reduction is
also compatible with the second-moment oracle assumption
\eqref{eq:oracle}: the certificate uses constant-probability block control
followed by median amplification, rather than assuming sub-Gaussian oracle
noise.

\paragraph{Upper-confidence selection.}
The stated algorithm assigns the same certification budget to every candidate,
so all radii equal $r_{q_T}$. Writing the selector in upper-confidence form
makes the proof interface more general: if candidate $j$ had a valid radius
$r_j$, the same deterministic argument would give
$
  S(x_{\widehat j})
  \le
  S(x_j)+2r_j,
  \text{ for every }j
$
on the joint certificate event. Unequal or adaptive certification allocations
could therefore replace uniform allocation provided they preserve valid
candidate-specific radii and the total primitive-call budget.

\begin{table}[t]
\caption{Proof interfaces in the end-to-end CARP-KL guarantee.}
\label{tab:interfaces}
\centering
\small
\begin{tabular}{@{}p{0.23\columnwidth}p{0.31\columnwidth}p{0.34\columnwidth}@{}}
\toprule
Module & Input & Exported guarantee \\
\midrule
Restart
  & valid $\kappa$, cap $n$
  & RMS bound $\cR_n(\kappa)$ \\
Grid
  & continuous $\kappa$
  & valid affordable neighbor \\
Certificate
  & $x_j$, fresh calls
  & radius $r_{q_T}$ \\
Selection
  & simultaneous event
  & best candidate $+\,2r_{q_T}$ \\
Accounting
  & floors and caps
  & at most $T$ calls \\
\bottomrule
\end{tabular}
\end{table}
% END sections/04_algorithm

% BEGIN sections/05_main_results
\section{Main Results}
\label{sec:results}

For $\kappa=(\theta,c,\Delta)$, define
\[
p_\theta=4\theta-1,\quad
b_\theta=e^{1-1/(4\theta)}-1,\quad
\nu_\theta=\frac{\theta}{4\theta-1},
\]
and
\[
A_\kappa=16e\,c^{-4}\rho L^2\Delta^{-p_\theta},
\cR_u(\kappa)
=e^{1/4}c\Delta^\theta
\left[1+\frac{b_\theta u}{1+A_\kappa}\right]^{-\nu_\theta}.
\]
Here $A_\kappa$ is the first-stage scale and
$\cR_u(\kappa)=O(u^{-\nu_\theta})$ is the known-description bound under cap
$u$. Retaining this nonasymptotic form is essential because affordability and
portfolio sharing depend on the full description, not only on its exponent.

\begin{proposition}[Capped restart]
\label{prop:restart}
If the first tuned stage for $\kappa$ fits under cap $u$, let
$N(u,\kappa)$ denote the number of completed restart stages. The last completed
restart output satisfies
\[
  \bigl(\E S(x_{N(u,\kappa)})^2\bigr)^{1/2}
  \le \cR_u(\kappa).
\]
\end{proposition}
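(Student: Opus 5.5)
The plan is to analyze a single tuned restart stage with the standard stochastic proximal-subgradient (model-based) bound of \citet{davis2019stochastic}, and then chain stages through the KL inequality of Assumption~\ref{ass:klc} in expectation. Write $\Delta_k$ for an upper bound on $\E[F(x_k)-f^\star]$ at the start of stage $k$, with $\Delta_0\le\Delta$. Stage $k$ runs the projected stochastic subgradient method for $T_k$ calls with a constant step $\alpha_k$ and returns a uniformly sampled (or suitably weighted) iterate. The one-stage estimate from \citet{davis2019stochastic} gives
\[
  \E S(x_{k+1})^2 \le \frac{C_1\,\E[F(x_k)-f^\star]}{\alpha_k T_k}+C_2\rho L^2\alpha_k ,
\]
with explicit constants $C_1,C_2$ (e.g.\ $C_1=4\rho\cdot$const, matching the factor $16e$ in $A_\kappa$). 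Since every iterate lies in $\mathcal V$, the KL inequality $S\ge c(F-f^\star)^\theta$ applies. Combined with Jensen's inequality (valid because $2\theta\ge1$), it gives
\[
  \bigl(\E[F(x_{k+1})-f^\star]\bigr)^{2\theta}\le c^{-2}\,\E S(x_{k+1})^2 .
\]
This closes the recursion for the expected gap. This is the "expectation-level" property the setting paragraph refers to.

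Next I choose the tuned schedule so that the one-stage bound contracts the gap by a fixed factor. I pick $\alpha_k$ and $T_k$ to balance the two terms at a target $\E S^2\le c^2\Delta_{k+1}^{2\theta}$ with $\Delta_{k+1}=e^{-1/(4\theta)}\Delta_k$, or whatever geometric factor reproduces $b_\theta$. Solving gives $\alpha_k\propto c^2\Delta_{k+1}^{2\theta}/(\rho L^2)$ and
\[
  T_k\propto \frac{\Delta_k}{\alpha_k c^2\Delta_{k+1}^{2\theta}}\propto c^{-4}\rho L^2\Delta_k^{-(4\theta-1)} .
\]
The first stage therefore has length of order $A_\kappa=16e\,c^{-4}\rho L^2\Delta^{-p_\theta}$, and later stages grow geometrically with ratio $e^{p_\theta/(4\theta)}=e^{1-1/(4\theta)}=1+b_\theta$. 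The normalization $\Delta\le1$ together with $\theta\ge1/2$ ensures the monotonicity needed when the bounds are replaced by conservative ones, as in Section~\ref{sec:setting}. By induction, after $N$ completed stages
\[
  \bigl(\E S(x_N)^2\bigr)^{1/2}\le c\,\Delta_N^{\theta}\cdot e^{1/4}
  \quad\text{with}\quad
  \Delta_N=e^{-N/(4\theta)}\Delta ,
\]
where the $e^{1/4}$ accounts for the last stage's target being one contraction step ahead.

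Finally I convert the stage count into the cap. Atomic execution means $N=N(u,\kappa)$ is the largest integer with $\sum_{k<N}T_k\le u$. With $T_k\approx A_\kappa(1+b_\theta)^k$ (ceilings absorbed by the ``$1+$'' terms), the geometric sum gives
\[
  (1+b_\theta)^N\ge 1+\frac{b_\theta u}{1+A_\kappa}
\]
up to the rounding conventions. Substituting into
\[
  e^{-N/(4\theta)}=\bigl((1+b_\theta)^{N}\bigr)^{-1/p_\theta}
  \quad\text{and}\quad
  \Delta_N^{\theta}=\Delta^\theta e^{-N/4}=\Delta^{\theta}\bigl((1+b_\theta)^N\bigr)^{-\nu_\theta}
\]
yields exactly $\cR_u(\kappa)$. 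The step I expect to be the main obstacle is this last bookkeeping. Making the integer rounding of $T_k$ and the incomplete final stage (which is discarded) consistent with the clean form $1+b_\theta u/(1+A_\kappa)$ requires care. The $+1$ in $1+A_\kappa$ must absorb ceilings, and the relation must also hold when exactly one stage fits. I would first try bounding $T_k\le(1+A_\kappa)(1+b_\theta)^k$ and verify that the cumulative cost of $N+1$ stages exceeds $u$, which gives the required lower bound on $(1+b_\theta)^{N}$. A secondary subtlety is that the cited one-stage bound controls a randomly selected iterate. The ``last completed output'' must therefore be defined as that stage's selected output, so that $x_{N}$ inherits the stage bound.
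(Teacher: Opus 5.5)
Your route is the paper's: the one-stage projected stochastic-subgradient bound with deterministic gap targets $\Delta_k=e^{-k/(4\theta)}\Delta$, Jensen with $2\theta\ge1$ to turn pointwise KL into the expected-gap recursion, stage lengths $\lceil A_\kappa(1+b_\theta)^k\rceil$, and a cap conversion through maximality of $N=N(u,\kappa)$. The last step, which you flagged yourself, contains a false inequality, and the factor $e^{1/4}$ is spent in the wrong place. Maximality of $N$ only says that the first $N+1$ stages overflow the cap. With $T_k\le(1+A_\kappa)(1+b_\theta)^k$ this gives
\[
  u<\sum_{k=0}^{N}T_k\le(1+A_\kappa)\frac{(1+b_\theta)^{N+1}-1}{b_\theta},
  \qquad\text{i.e.}\qquad
  (1+b_\theta)^{N+1}>Q_u:=1+\frac{b_\theta u}{1+A_\kappa}.
\]
The inequality you state, $(1+b_\theta)^N\ge Q_u$, is false in general, and not because of rounding. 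The cap $u$ can be as large as the cost of $N+1$ stages minus one, and then $Q_u$ exceeds $(1+b_\theta)^N$ by nearly the factor $1+b_\theta$. For example, $\theta=1/2$ with stage lengths $\lceil 100e^{k/2}\rceil$ and $u=264$ gives $N=1$, $(1+b_\theta)^N\approx1.65$, and $Q_u\approx2.70$. This loss is exactly the price of the discarded incomplete stage.

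Conversely, your stage bound needs no $e^{1/4}$. The schedule targets $\E S(x_{k+1})^2\le c^2\Delta_{k+1}^{2\theta}$ at the stage output itself, so $(\E S(x_N)^2)^{1/2}\le c\Delta^\theta e^{-N/4}$ for every $N\ge1$. The hypothesis that the first stage fits is precisely what guarantees $N\ge1$; say this explicitly. If you kept the extra $e^{1/4}$ and combined it with the correct cap inequality, you would only get $e^{1/2}c\Delta^\theta Q_u^{-\nu_\theta}$, so $\cR_u(\kappa)$ is out of reach in your accounting. The correct finish is
\[
  e^{-N/4}=e^{1/4}e^{-(N+1)/4}=e^{1/4}\bigl((1+b_\theta)^{N+1}\bigr)^{-\nu_\theta}<e^{1/4}Q_u^{-\nu_\theta},
\]
using $\nu_\theta\log(1+b_\theta)=1/4$. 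This index shift from $N+1$ back to $N$ is exactly where the $e^{1/4}$ in $\cR_u(\kappa)$ comes from in Lemma~\ref{lem:external-cap}.
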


The capped statement accounts for integer stage lengths and is proved in
Appendix~\ref{app:restart}.

\begin{lemma}[KL-grid approximation]
\label{lem:grid}
Under the declared polynomial parameter ranges,
$K_T=O((\log T)^3)$, and every valid description $\kappa$ has a
recursion-valid neighbor $\bar\kappa\in\cG_T$ satisfying
\[
  A_{\bar\kappa}\le C_AA_\kappa,
  \qquad
  \cR_u(\bar\kappa)\le C_{\rm grid}\cR_u(\kappa),
\]
where
$
  C_A=16e^{4r_\Delta}
$ and $
  C_{\rm grid}\le8e^{2+2r_\Delta}.
$
\end{lemma}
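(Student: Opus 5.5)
The proof has three parts: count the grid, construct a one-sided neighbor $\bar\kappa$ for each valid $\kappa$, and compare $A_{\bar\kappa}$ and $\cR_u(\bar\kappa)$ with their values at $\kappa$ coordinate by coordinate.

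\textbf{Counting.} The $\theta$-mesh on $[1/2,1]$ with step $\eta_T$ has at most $\lceil\log(eT)\rceil/2+2=O(\log T)$ points. The dyadic $c$-grid on $[c_{\min,T},c_{\max,T}]$ has at most $r_c\log_2 T+2$ points. The dyadic $\Delta$-grid on $[T^{-r_\Delta},1]$ has at most $r_\Delta\log_2T+2$ points. The product gives $K_T=O((\log T)^3)$.

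\textbf{Neighbor.} Given a valid $\kappa$, define $\bar\kappa=(\bar\theta,\bar c,\bar\Delta)$ by rounding one-sidedly:
\begin{itemize}
\item $\bar\theta$ is the smallest mesh point $\ge\theta$, so $\bar\theta-\theta\le\eta_T$;
\item $\bar c$ is the largest dyadic point $\le c$, so $c/2<\bar c\le c$;
\item $\bar\Delta$ is the smallest dyadic point $\ge\Delta$, capped at $1$, so $\Delta\le\bar\Delta\le 2\Delta$ and $\bar\Delta\le1$.
\end{itemize}
Recursion-validity follows from the safe directions recorded after Assumption~\ref{ass:klc}. Since gaps lie in $(0,1]$, we have $(F-f^\star)^{\bar\theta}\le (F-f^\star)^\theta$ and $\bar c\le c$, so the KL inequality transfers from $\kappa$ to $\bar\kappa$ on $\mathcal V$. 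Also $\Delta_0\le\bar\Delta\le1$ holds, and the common-region requirement is part of the definition of validity.

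\textbf{Bounding $A$.} Write
\[
A_{\bar\kappa}/A_\kappa=(c/\bar c)^4\,\Delta^{p_\theta}\bar\Delta^{-p_{\bar\theta}}.
\]
The first factor is at most $16$. For the second, split
\[
\Delta^{p_\theta}\bar\Delta^{-p_{\bar\theta}}=(\Delta/\bar\Delta)^{p_\theta}\,\bar\Delta^{-4(\bar\theta-\theta)}.
\]
The first piece is $\le1$. The second is at most
\[
\bar\Delta^{-4\eta_T}\le (T^{-r_\Delta})^{-4/\log(eT)}\le e^{4r_\Delta},
\]
using $\bar\Delta\ge\Delta\ge T^{-r_\Delta}$ (if $\Delta$ lies in the declared range). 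This gives $C_A=16e^{4r_\Delta}$.

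\textbf{Bounding $\cR_u$, prefactor.} The prefactor ratio is
\[
\bar c\bar\Delta^{\bar\theta}/(c\Delta^\theta)\le \bar\Delta^{\bar\theta}/\Delta^\theta\le 2^{\bar\theta}\,\Delta^{\bar\theta-\theta}\le 2,
\]
since $\Delta\le1$.

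\textbf{Bounding $\cR_u$, bracket.} The bracket $[1+b_\theta u/(1+A_\kappa)]^{-\nu_\theta}$ is the main obstacle, because the exponent $\nu_\theta$ and the coefficient $b_\theta$ both move with $\theta$, and $\nu_\theta$ is decreasing in $\theta$. First use $1+A_{\bar\kappa}\le C_A(1+A_\kappa)$. Next, $b_\theta$ is increasing in $\theta$, so $b_{\bar\theta}\ge b_\theta$. This gives
\[
1+b_{\bar\theta}u/(1+A_{\bar\kappa})\ge C_A^{-1}\bigl(1+b_\theta u/(1+A_\kappa)\bigr).
\]
Raising to the power $-\nu_{\bar\theta}$ with $\nu\le1/2$ costs a factor $C_A^{1/2}=4e^{2r_\Delta}$.

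\textbf{Bounding $\cR_u$, exponent shift.} It remains to replace $-\nu_{\bar\theta}$ by $-\nu_\theta$. Write $Y=1+b_\theta u/(1+A_\kappa)\ge1$. Then
\[
Y^{-\nu_{\bar\theta}}=Y^{-\nu_\theta}\,Y^{\nu_\theta-\nu_{\bar\theta}},\qquad 0\le\nu_\theta-\nu_{\bar\theta}\le|\nu'|\eta_T\le\eta_T,
\]
since $|\nu'|=1/(4\theta-1)^2\le1$. The factor $Y^{\eta_T}$ is $O(1)$ only if $\log Y=O(\log T)$, which requires $u\le T$. This holds because the cap used is $n\le T$ and $A_\kappa\ge0$, $b_\theta\le e$. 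With $\eta_T=1/\lceil\log(eT)\rceil$, we get $Y^{\eta_T}\le e^{\log(1+eT)/\log(eT)}\le e^{1+o(1)}$.

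Collecting the factors $2\cdot4e^{2r_\Delta}\cdot e$ and absorbing the slack in the last exponent bound gives the constant $C_{\rm grid}\le8e^{2+2r_\Delta}$. I will check this exponent-shift step most carefully, including whether the statement must be restricted to $u\le T$, which is implicit in its use.
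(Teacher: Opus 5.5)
\textbf{The gap is in your recursion-validity step.} You argue that the KL inequality transfers pointwise from $(\theta,c)$ to $(\bar\theta,\underline c)$ (your $\bar c$) on $\mathcal V$ because ``gaps lie in $(0,1]$''. Nothing in the setting provides this. The normalization $\Delta\le1$ bounds only the initial gap $\Delta_0$ and the deterministic expectation targets of the restart recursion. The paper stresses that it ``does not assert a realized gap bound.''

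Points of $\mathcal V$ can have $F(x)-f^\star>1$: KL together with $S\le4L$ only gives $F(x)-f^\star\le(4L/c)^{1/\theta}$. Suppose the original inequality is tight at such a point, $c$ is already a grid point (so $\underline c=c$), and $\bar\theta>\theta$. Then $\underline c\,(F(x)-f^\star)^{\bar\theta}>S(x)$. So $\bar\kappa$ need not be a valid description, and you cannot invoke Proposition~\ref{prop:restart} for it. Yet that invocation is what makes $\cR_u(\bar\kappa)$ an actual guarantee for the rounded schedule.

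The missing idea is to keep $\kappa$ as the geometric certificate and close the rounded induction in expectation. Set $\bar D_k=\bar\Delta e^{-k/(4\bar\theta)}$ and use the rounded stage lengths. Lemma~\ref{lem:deterministic-target} and the cancellation of Lemma~\ref{lem:restart-induction} give $\E S(x_{k+1})^2\le\underline c^{\,2}\bar D_{k+1}^{2\bar\theta}$. Jensen with the original $(\theta,c)$, via Lemma~\ref{lem:stochastic-kl} on the common region, then yields
\[
  \E[F(x_{k+1})-f^\star]\le\Bigl(\frac{\underline c}{c}\Bigr)^{1/\theta}\bar D_{k+1}^{\bar\theta/\theta}\le\bar D_{k+1}.
\]
The last step uses only that the deterministic target satisfies $\bar D_{k+1}\le\bar\Delta\le1$. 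With the base case $\Delta_0\le\Delta\le\bar\Delta$, this closes the recursion. The completed-stage and cap arguments then give $(\E S(x_u(\bar\kappa))^2)^{1/2}\le\cR_u(\bar\kappa)$ without any pointwise validity of $\bar\kappa$. This is exactly what the paper's restart-transfer lemma (Lemma~\ref{lem:grid-restart-transfer}) does.

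The rest of your proposal is sound and essentially the paper's argument:
\begin{enumerate}
\item The cardinality count matches.
\item Your factorization of $A_{\bar\kappa}/A_\kappa$ is correct. Splitting off $\bar\Delta^{-4(\bar\theta-\theta)}$ is slightly tighter than the paper's $\Delta^{-4d}$.
\item The prefactor bound $2$ matches.
\item The bracket comparison via $1+A_{\bar\kappa}\le C_A(1+A_\kappa)$ and $b_{\bar\theta}\ge b_\theta$ matches.
\item The exponent shift via $|\nu_\theta'|\le1$ matches.
\end{enumerate}
You are right that the $\cR_u$ comparison needs $u\le T$; the paper states it for $1\le u\le T$. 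Your ``$o(1)$'' is uniformly at most $\log2$, since $1+eT\le2eT$. So the collected factor $2\cdot4e^{2r_\Delta}\cdot2e$ does lie below $8e^{2+2r_\Delta}$. Using $b_\theta\le e^{3/4}-1$ gives the factor $e$ directly, as in the paper.
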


The one-sided grid preserves recursion validity, while the
$O(1/\log T)$ exponent mesh prevents a polynomial discretization loss;
see Appendix~\ref{app:grid}.

\begin{proposition}[Primitive-oracle Moreau certificate]
\label{prop:certificate}
For any candidate $x$, using
\[
  B(\varepsilon)=2\lceil4\log(1/\varepsilon)\rceil+1
\]
independent blocks of $q$ fresh primitive calls gives
\[
  \Prob\!\left(
  |\widehat S(x)-S(x)|
  >
  \frac{8\sqrt{66}L}{\sqrt{q+32}}
  \,\middle|\,x
  \right)
  \le\varepsilon.
\]
No boundedness of $X$, exact proximal map, or direct Moreau-gradient query is
required.
\end{proposition}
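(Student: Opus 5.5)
The plan is to analyze a single certification block first and then amplify with the median. Fix the candidate $x$; all statements are conditional on $x$, which is legitimate because the block samples are fresh. Define the proximal subproblem
\[
  \phi_x(y)=g(y)+\rho\|y-x\|^2+\ind_X(y),
  \qquad
  y^\star=\prox_{\lambda f}(x),
\]
so that $S(x)=2\rho\|x-y^\star\|$.

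\textbf{Subproblem structure.} Since $g$ is $\rho$-weakly convex, $\phi_x$ is $\rho$-strongly convex on $X$. The block recursion is projected stochastic subgradient descent on $\phi_x$, using the stochastic subgradient $G(y_t,\xi_t)+2\rho(y_t-x)$ and step size $\alpha_t=2/(\rho(t+32))$.

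\textbf{Step 1: distance recursion.} By the triangle inequality,
\[
  |Z-S(x)| = 2\rho\,\bigl|\|x-y_q\|-\|x-y^\star\|\bigr| \le 2\rho\|y_q-y^\star\|,
\]
so it suffices to bound $\E\|y_q-y^\star\|^2$. Nonexpansiveness of $\Pi_X$, strong convexity of $\phi_x$, and unbiasedness give the standard recursion
\[
  \E\|y_{t+1}-y^\star\|^2\le(1-\rho\alpha_t)\E\|y_t-y^\star\|^2+\alpha_t^2\E\|\hat g_t\|^2 .
\]
The point needing care is the second moment of the stochastic subgradient on an unbounded $X$:
\[
  \E\|\hat g_t\|^2\le 2L^2+8\rho^2\E\|y_t-x\|^2\le 2L^2+16\rho^2\E\|y_t-y^\star\|^2+16\rho^2\|x-y^\star\|^2 .
\]
Two facts close this. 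First, the offset $32$ in the step size makes $16\rho^2\alpha_t^2\le\rho\alpha_t/2$, so the variable term is absorbed into the contraction. Second, the bias $\|x-y^\star\|$ is controlled by optimality of $y^\star$: $2\rho(x-y^\star)\in\partial(g+\ind_X)(y^\star)$, and $\|\nabla F\|\le L$, which gives $\|x-y^\star\|\le L/(2\rho)$. Using $y_0=x$ for the initial distance, the effective variance proxy becomes $O(L^2)$ with no bounded-domain assumption.

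\textbf{Step 2: unrolling.} With $\alpha_t=2/(\rho(t+32))$ and contraction factor roughly $1-1/(t+32)$, I will prove by induction that
\[
  \E\|y_t-y^\star\|^2\le \frac{C L^2}{\rho^2(t+32)},
\]
for an explicit constant $C$ (consistent with $66$ times some factor). The base case holds because $\|y_0-y^\star\|\le L/(2\rho)$. This yields
\[
  \E|Z-S(x)|^2\le \frac{4C L^2}{t+32}\quad\text{at }t=q .
\]

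\textbf{Step 3: constant-probability block bound.} Chebyshev/Markov gives, for each block,
\[
  \Prob\bigl(|Z_b-S(x)|>r_q\mid x\bigr)\le 1/4
\]
once $r_q$ is twice the RMS bound, i.e.\ $r_q=2\sqrt{4C}\,L/\sqrt{q+32}$. Matching $8\sqrt{66}$ fixes $C$.

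\textbf{Step 4: median amplification.} The blocks are conditionally independent given $x$. If more than half the blocks are accurate, the median is accurate, because $B$ is odd and the accurate values form an interval around $S(x)$. By Hoeffding's inequality, the number of inaccurate blocks exceeds $B/2$ with probability at most $\exp(-2B(1/4)^2)=\exp(-B/8)$. Since $B\ge 8\log(1/\varepsilon)$, this is at most $\varepsilon$.

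\textbf{Main obstacle.} I expect the hard part to be Step 1–2 bookkeeping: absorbing the unbounded-domain term $\rho^2\|y_t-x\|^2$ into the contraction via the step-size offset, and bounding $\|x-y^\star\|$ by $L/(2\rho)$. The latter follows from $S(x)\le L$, which in turn follows from $\partial g$ having norm at most $L$ (the expectation bound in \eqref{eq:oracle} and Jensen), together with the normal-cone argument that projection does not increase the relevant distance. Getting the exact constant $8\sqrt{66}$ is then a matter of tracking these factors.
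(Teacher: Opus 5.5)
\textbf{The induction in Steps 1--2 does not close as written.} Your overall architecture matches the paper's, and Steps 3--4 are correct: proximal self-localization, an affine second-moment bound, a $1/(t+32)$ induction, Markov at level $1/4$, then Hoeffding plus the median. The problem is the contraction bookkeeping in Steps 1--2, which cannot produce an $O(1/(q+32))$ mean-square bound.

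Write $z=t+32$, so $\rho\alpha_t=2/z$ and $16\rho^2\alpha_t^2=64/z^2$. Your recursion starts from the factor $1-\rho\alpha_t=1-2/z$ and adds $64/z^2$ from the state-dependent variance. The absorption $64/z^2\le\rho\alpha_t/2=1/z$ that you invoke holds only for $z\ge 64$; at $t=0$ the true coefficient $1-2/z+64/z^2$ is exactly $1$.

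Even granting the absorption, the resulting factor $1-1/z$ is the critical case for $1/t$ step sizes. With $\E D_{t+1}\le(1-1/z)\E D_t+a/z^2$, the ansatz $\E D_t\le C/z$ requires $(Cz-C+a)(z+1)\le Cz^2$, i.e.\ $a(z+1)\le C$, which fails for large $z$. The recursion with equality grows like $a\log z/z$. So the radius $8\sqrt{66}L/\sqrt{q+32}$ does not follow from your plan.

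\textbf{The missing ingredient is the full drift.} You need $\langle h_x(y),y-y^\star\rangle\ge\rho\|y-y^\star\|^2$, where $h_x(y)=\E_\xi G(y,\xi)+2\rho(y-x)$.
\begin{itemize}
\item Add the weak-convexity inequality for $g$ at $y$ to the exact expansion of $\rho\|y^\star-x\|^2$. This gives $\langle h_x(y),y-y^\star\rangle\ge\Psi_x(y)-\Psi_x(y^\star)+\frac{\rho}{2}\|y-y^\star\|^2$.
\item Do not drop the gap $\Psi_x(y)-\Psi_x(y^\star)$. Bound it below by quadratic growth at the constrained minimizer, $\Psi_x(y)-\Psi_x(y^\star)\ge\frac{\rho}{2}\|y-y^\star\|^2$.
\item This doubles the contraction to $1-2\rho\alpha_t=1-4/z$. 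The absorption $64/z^2\le 2/z$ then holds exactly when $z\ge 32$, which is why the offset is $32$, leaving $1-2/z$.
\item The induction with constant $264$ now closes, because $(1-2/z)/z+1/z^2=(z-1)/z^2\le 1/(z+1)$.
\end{itemize}

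\textbf{Secondary point on localization.} Your bound $\|x-y^\star\|\le L/(2\rho)$ is true, but not for the reason you give. The oracle assumption \eqref{eq:oracle} with Jensen bounds only the particular element $\E_\xi G(y,\xi)\in\partial g(y)$, not all of $\partial g(y^\star)$. At a boundary point, the normal-cone decomposition of $2\rho(x-y^\star)$ may use an element of $\partial g(y^\star)$ of norm larger than $L$. You would need to show that \eqref{eq:oracle} makes $g$ $L$-Lipschitz relative to $X$, and then compare $\Psi_x$ along the feasible segment from $y^\star$ to $x$.

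The paper avoids this by comparing $\Psi_x(y^\star)\le\Psi_x(x)=g(x)$ with weak convexity at $x$, using only $\E_\xi G(x,\xi)$. That gives the cruder $\|y^\star-x\|\le 2L/\rho$, which is exactly where $66=2+64$ comes from. Once the drift is fixed, either localization yields a constant at most $264$.
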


The conditional statement applies to random trained candidates; the
self-localizing proximal recursion and median amplification are proved in
Appendix~\ref{app:certificate}.

\paragraph{Affordable comparators.}
Let
\[
  \Kaff=
  \left\{
  \begin{aligned}
    \kappa:\;&
    \kappa\text{ is valid and in the declared ranges},\\
    &C_A(1+A_\kappa)\le n
  \end{aligned}
  \right\}.
\]
The factor $C_A$ ensures that the grid neighbor of each comparator can complete
its first restart stage within cap $n$.

\begin{theorem}[Same-oracle adaptation to unknown KL geometry]
\label{thm:master}
Under Section~\ref{sec:setting} and Assumption~\ref{ass:klc}, suppose the
declared ranges are polynomial, $\delta\in(0,1/2)$,
\[
  T\ge4K_T,\qquad m\ge B_K,\qquad \Kaff\ne\varnothing.
\]
Then Algorithm~\ref{alg:carp} uses at most $T$ primitive stochastic-subgradient
calls and
\[
\begin{aligned}
  \E S(\widehat x_T)
  &\le
  C_{\rm grid}
  \inf_{\kappa\in\Kaff}
  (4K_T)^{\nu_\theta}\cR_T(\kappa)
  \\
  &\quad
  +\frac{16\sqrt{66}L}{\sqrt{q_T+32}}
  +4L\delta .
\end{aligned}
\]
\end{theorem}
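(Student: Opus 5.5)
The proof combines the four modules of Table~\ref{tab:interfaces}. The budget claim is just \eqref{eq:budget}. For the rate, fix $\kappa\in\Kaff$ and split $\E S(\widehat x_T)$ on the simultaneous certificate event
\[
  \mathcal E=\bigcap_{j\le K_T}\bigl\{|\widehat S_j-S(x_j)|\le r_{q_T}\bigr\}.
\]

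\textbf{Step 1 (certification event).} Conditioning on the training sigma-field $\mathcal T$ fixes every $x_j$. Apply Proposition~\ref{prop:certificate} with $\varepsilon=\delta/K_T$, so that $B(\varepsilon)=B_K$ and $q=q_T\ge1$ (this uses $m\ge B_K$). A union bound over the $K_T$ candidates gives $\Prob(\mathcal E^c\mid\mathcal T)\le\delta$, and the same bound holds unconditionally.

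\textbf{Step 2 (selection on $\mathcal E$).} On $\mathcal E$, minimality of $\widehat j$ gives, for every $j$,
\[
  S(\widehat x_T)\le\widehat S_{\widehat j}+r_{q_T}\le\widehat S_j+r_{q_T}\le S(x_j)+2r_{q_T}.
\]
Take $j=\bar j$, the index of the grid neighbor $\bar\kappa$ from Lemma~\ref{lem:grid}. Since $2r_{q_T}=16\sqrt{66}L/\sqrt{q_T+32}$, this yields
\[
  \E[S(\widehat x_T)\mathbf 1_{\mathcal E}]\le\E S(x_{\bar j})+2r_{q_T}.
\]
By Lemma~\ref{lem:grid}, $C_A(1+A_\kappa)\le n$ implies $1+A_{\bar\kappa}\le n$, so the first stage of $\bar\kappa$ fits under cap $n$. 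Proposition~\ref{prop:restart} then applies, and with Jensen and the lemma,
\[
  \E S(x_{\bar j})\le(\E S(x_{\bar j})^2)^{1/2}\le\cR_n(\bar\kappa)\le C_{\rm grid}\cR_n(\kappa).
\]

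\textbf{Step 3 (cap conversion).} Compare $\cR_n(\kappa)$ with $\cR_T(\kappa)$. Since $T\ge4K_T$, the floor gives $n=\lfloor T/(2K_T)\rfloor\ge T/(4K_T)$, so $u=T$ and $u=n$ differ by at most a factor $4K_T$. For $s\ge1$ we have $1+su\le s(1+u)$. Applying this with $s=4K_T$ and $u=b_\theta n/(1+A_\kappa)$ gives
\[
  \Bigl[1+\frac{b_\theta n}{1+A_\kappa}\Bigr]^{-\nu_\theta}\le(4K_T)^{\nu_\theta}\Bigl[1+\frac{b_\theta T}{1+A_\kappa}\Bigr]^{-\nu_\theta},
\]
that is, $\cR_n(\kappa)\le(4K_T)^{\nu_\theta}\cR_T(\kappa)$. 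Taking the infimum over $\kappa\in\Kaff$ produces the first term of the theorem.

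\textbf{Step 4 (failure event, the main obstacle).} We need $\E[S(\widehat x_T)\mathbf 1_{\mathcal E^c}]\le4L\delta$, and there is no boundedness to lean on. The plan is a uniform deterministic bound $S(x)\le 2L$ on $X$. Let $y=\prox_{\lambda f}(x)$. Optimality gives $2\rho(x-y)\in\partial g(y)+N_X(y)$, while the oracle assumption only provides $\E G(y,\xi)\in\partial g(y)$ with norm at most $L$.

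First, use the $\rho$-strong convexity of the proximal subproblem together with the subgradient inequality at $y$ for $v=\E G(y,\xi)$. Combining $g(x)\ge g(y)+\langle v,x-y\rangle-\tfrac\rho2\|x-y\|^2$ with $g(x)\ge g(y)+\rho\|y-x\|^2+\tfrac\rho2\|x-y\|^2$, I expect to get $2\rho\|x-y\|^2\le\langle v,x-y\rangle$. This would give $S(x)=2\rho\|x-y\|\le\|v\|\le L$, or at worst $2L$ after constants from the weak-convexity inequality. I would derive the needed subgradient inequality directly from $\E G(y,\xi)\in\partial g(y)$ and the definition of weak convexity, restricted to $X$.

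Then, since $S\le2L$ pointwise and $\Prob(\mathcal E^c)\le\delta$ from Step 1,
\[
  \E[S(\widehat x_T)\mathbf 1_{\mathcal E^c}]\le2L\,\Prob(\mathcal E^c)\le2L\delta\le4L\delta.
\]
The factor $4$ leaves slack for the constant in the pointwise bound.

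Summing Steps 2--4 gives the stated inequality.
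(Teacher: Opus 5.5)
Your Steps 1--3 follow the paper's proof (Theorem~\ref{thm:master-cap} together with Lemmas~\ref{lem:master-budget} and~\ref{lem:master-budget-sharing}) essentially line by line. There is one small imprecision. Proposition~\ref{prop:restart} is a statement about \emph{valid} descriptions, and $\bar\kappa$ need not be valid. The bound $(\E S(x_{\bar j})^2)^{1/2}\le\cR_n(\bar\kappa)$ should therefore be attributed to the recursion-validity transfer, Lemma~\ref{lem:grid-restart-transfer}. That lemma closes the expected-gap induction for the rounded schedule using the KL inequality of the original $\kappa$ on the common region.

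The genuine gap is in Step 4. The two inequalities you propose to combine are both \emph{lower} bounds on $g(x)-g(y)$: one is weak convexity at $y$ with $v\in\partial g(y)$, the other is strong convexity of $\Psi_x$ at its minimizer. No elimination is possible, so $2\rho\|x-y\|^2\le\langle v,x-y\rangle$ does not follow. In fact that inequality is false. Take $g(y)=|y|$ on $X=\R$ with the deterministic oracle $G(y,\xi)=\operatorname{sign}(y)$ and $\operatorname{sign}(0)=0$, so that $L=1$ and $v=0\in\partial g(0)$. For $x=1/(4\rho)$ the proximal point is $y=0$, so the right side equals $0$ while the left side equals $1/(8\rho)>0$. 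The oracle guarantees only that \emph{one} element of $\partial g(y)$ is short, and it need not be the element that appears in the optimality condition at $y$. Replacing $|y|$ by $M|y|$ shows that the oracle information at the proximal point alone cannot bound $S(x)$ at all.

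The missing idea is to use the oracle mean at the candidate itself, $\bar G(x)=\E G(x,\xi)\in\partial g(x)$, with $\|\bar G(x)\|\le L$ by Jensen's inequality.
\begin{itemize}
\item Weak convexity at $x$, evaluated at $y$, gives the \emph{upper} bound $g(x)-g(y)\le\langle\bar G(x),x-y\rangle+\frac\rho2\|x-y\|^2$.
\item The comparison $\Psi_x(y)\le\Psi_x(x)$ gives the lower bound $g(x)-g(y)\ge\rho\|x-y\|^2$.
\item Together these yield $\frac\rho2\|x-y\|^2\le L\|x-y\|$, that is, $S(x)=2\rho\|x-y\|\le 4L$.
\end{itemize}
This is exactly Lemma~\ref{lem:prox-localization}, and it produces the $4L\delta$ term with no slack required. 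If you also use the strong-convexity gain $\Psi_x(x)\ge\Psi_x(y)+\frac\rho2\|x-y\|^2$, the same argument sharpens the bound to $2L$.
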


The three terms correspond respectively to optimization with portfolio
sharing, certified selection, and certificate failure. Importantly, the
infimum ranges over complete KL descriptions rather than a canonical exponent:
different valid descriptions may trade a better asymptotic exponent against a
smaller finite-horizon restart scale. The theorem therefore performs the
comparison at the level of executable finite-horizon guarantees. Details of
conditional selection and the cap-to-horizon comparison appear in
Appendix~\ref{app:master}.

\begin{corollary}[Rate adaptation]
\label{cor:rate}
For every fixed admissible $\kappa=(\theta,c,\Delta)$ that eventually lies in
the declared ranges, with $\delta=T^{-1}$,
\[
  \E\|\nabla f_{1/(2\rho)}(\widehat x_T)\|
  =
  \widetilde O\!\left(
  T^{-\theta/(4\theta-1)}
  \right).
\]
\end{corollary}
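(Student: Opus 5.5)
The plan is to specialize Theorem~\ref{thm:master} to a single fixed admissible $\kappa=(\theta,c,\Delta)$ with $\delta=T^{-1}$ and bound each of its three terms. The steps are: check the finite-budget hypotheses for large $T$; show that $\kappa\in\Kaff$ eventually, so the infimum is at most the value at $\kappa$; bound that value by $\widetilde O(T^{-\nu_\theta})$; and show that the selection and failure terms are of lower order.

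\textbf{Hypotheses and affordability.} By Lemma~\ref{lem:grid}, $K_T=O((\log T)^3)$. With $\delta=T^{-1}$ we have $B_K=O(\log(K_TT))=O(\log T)$. Since $n=m=\lfloor T/(2K_T)\rfloor=\Omega(T/(\log T)^3)$, the conditions $T\ge4K_T$ and $m\ge B_K$ hold for all large $T$. Moreover
\[
  q_T=\lfloor m/B_K\rfloor=\Omega\bigl(T/(\log T)^4\bigr).
\]
Because $\kappa$ is fixed and eventually lies in the declared ranges, $A_\kappa$ is a constant independent of $T$. The constant $C_A=16e^{4r_\Delta}$ is also independent of $T$, since $r_\Delta$ is a fixed exponent. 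Hence $C_A(1+A_\kappa)\le n$ once $T$ is large, so $\kappa\in\Kaff$ and in particular $\Kaff\ne\varnothing$.

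\textbf{Optimization term.} Since the infimum is at most its value at $\kappa$, this term is at most
\[
  C_{\rm grid}(4K_T)^{\nu_\theta}\cR_T(\kappa).
\]
From the explicit formula, $\cR_T(\kappa)\le e^{1/4}c\Delta^\theta\bigl(b_\theta T/(1+A_\kappa)\bigr)^{-\nu_\theta}$, where $b_\theta>0$ because $\theta\ge1/2$. This is $O(T^{-\nu_\theta})$, with constants depending only on $\kappa,\rho,L$.

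The bound $C_{\rm grid}\le8e^{2+2r_\Delta}$ is a constant. The factor $(4K_T)^{\nu_\theta}$ is $O((\log T)^{3/2})$ because $\nu_\theta\le1/2$. The optimization term is therefore $O\bigl((\log T)^{3/2}T^{-\nu_\theta}\bigr)=\widetilde O(T^{-\nu_\theta})$.

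\textbf{Selection and failure terms.} The selection term satisfies
\[
  \frac{16\sqrt{66}L}{\sqrt{q_T+32}}=O\bigl(L(\log T)^2T^{-1/2}\bigr).
\]
Since $\nu_\theta\le1/2$, this is $\widetilde O(T^{-\nu_\theta})$. The failure term is $4L\delta=4L/T=o(T^{-\nu_\theta})$.

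Summing the three terms gives $\E S(\widehat x_T)=\widetilde O(T^{-\theta/(4\theta-1)})$. The claim follows because $S(\widehat x_T)=\|\nabla f_{1/(2\rho)}(\widehat x_T)\|$ by definition.

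\textbf{Main obstacle.} The one delicate point is that the affordability constraint and the grid constants are $T$-uniform. If $C_A$ or $C_{\rm grid}$ grew with $T$, the comparator could drop out of $\Kaff$ or pick up a polynomial loss. This cannot happen here, because Lemma~\ref{lem:grid} expresses both constants through the fixed exponent $r_\Delta$ alone. I would state this explicitly, and also note that the $\widetilde O$ hides only powers of $\log T$ and constants depending on $\kappa,\rho,L,r_c,r_\Delta$.
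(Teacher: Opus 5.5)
Your proposal is correct and follows essentially the same route as the paper's proof. The shared steps are: eventual affordability of the fixed $\kappa$ via the $T$-independent constant $C_A$; the bound $\cR_T(\kappa)=O(T^{-\nu_\theta})$ multiplied by the logarithmic sharing factor $(4K_T)^{\nu_\theta}$; and $q_T=\Omega(T/(\log T)^4)$, which makes the certificate term $O(L(\log T)^2/\sqrt T)$ and $4L\delta=O(L/T)$ of no worse polynomial order. Your explicit check that $T\ge 4K_T$ and $m\ge B_K$ hold eventually, and your cruder bound $(4K_T)^{\nu_\theta}=O((\log T)^{3/2})$ in place of $O((\log T)^{3\nu_\theta})$, are harmless variations.
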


Indeed,
$
  q_T=\Omega(T/\log^4T),
$
so certification is $O(L\log^2T/\sqrt T)$, while portfolio-shared optimization
is
$
  O\!\left(
  \log^{3\nu_\theta}T\,T^{-\nu_\theta}
  \right).
$
Since $\nu_\theta\in[1/3,1/2]$, certification does not worsen the polynomial
exponent. Thus adaptation preserves the known-description polynomial rate;
its additional cost appears only through logarithmic portfolio-sharing and
certification factors.

\paragraph{Sharing a common initial-gap bound.}
The gap coordinate need not be searched independently when a common
conservative bound is available.

\begin{corollary}[Reduced portfolio under a common gap bound]
\label{cor:reduced}
Suppose $\Delta_+\in(0,1]$ is a common conservative bound and let
$\cG_T^{\rm red}$ be the two-dimensional one-sided grid over $(\theta,c)$
obtained by sharing $\Delta_+$ across all candidates. Then
\[
  K_T^{\rm red}=|\cG_T^{\rm red}|=O((\log T)^2).
\]
For a valid reduced-affordable
$\kappa=(\theta,c,\Delta)$ with $\Delta\le\Delta_+$, let
$\Gamma_\Delta=\Delta_+/\Delta$ and let $\bar\theta$ be the upper grid
neighbor of $\theta$. The optimization term in the same-oracle inequality
becomes
\[
  4e^{2+2r_\Delta}
  \inf_{\kappa}
  \Gamma_\Delta^{\bar\theta}
  (4K_T^{\rm red})^{\nu_\theta}
  \cR_T(\kappa),
\]
where the infimum is over the reduced affordable comparator class.
If $\Gamma_\Delta=O(1)$, the portfolio-sharing factor improves from
\[
  O(\log^{3\nu_\theta}T)
  \quad\text{to}\quad
  O(\log^{2\nu_\theta}T),
\]
and, for $\delta=T^{-1}$, certification is
$O(L\log^{3/2}T/\sqrt T)$.
\end{corollary}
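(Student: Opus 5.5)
The plan is to rerun the proof of Theorem~\ref{thm:master} with the two-dimensional grid $\cG_T^{\rm red}$ in place of $\cG_T$. Only the grid-approximation module (Lemma~\ref{lem:grid}) changes; the certificate module (Proposition~\ref{prop:certificate}), the upper-confidence selection argument and the accounting \eqref{eq:budget} are unchanged once $K_T$ is replaced by $K_T^{\rm red}$.

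\emph{Counting.} The $\theta$-mesh on $[1/2,1]$ has spacing $\eta_T$ and so has $O(\log T)$ points. The dyadic $c$-grid on $[c_{\min,T},c_{\max,T}]$ has $O(r_c\log T)$ points, since $c_{\max,T}/c_{\min,T}\le T^{r_c}$. With $\Delta$ fixed at $\Delta_+$, this gives $K_T^{\rm red}=O((\log T)^2)$.

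\emph{Reduced grid neighbor.} Given a valid $\kappa=(\theta,c,\Delta)$ with $\Delta\le\Delta_+$, set $\bar\kappa=(\bar\theta,\bar c,\Delta_+)$, where $\bar\theta$ is the upper $\theta$-neighbor and $\bar c$ the lower dyadic neighbor, so $c/2\le\bar c\le c$. Validity follows from $\Delta_0\le\Delta\le\Delta_+\le1$ and the safe directions in Section~\ref{sec:setting}. For gaps $F-f^\star\le1$ we have
\[
S\ge c(F-f^\star)^\theta\ge\bar c(F-f^\star)^{\bar\theta},
\]
and raising the gap bound to $\Delta_+$ keeps the expected-gap recursion valid. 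Hence $\bar\kappa$ is a recursion-valid description on the same region $\mathcal V$.

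\emph{Scale comparison.} The prefactor of $\cR_u$ satisfies
\[
e^{1/4}\bar c\,\Delta_+^{\bar\theta}\le e^{1/4}c\,\Delta^{\bar\theta}\,\Gamma_\Delta^{\bar\theta}.
\]
Since $\Delta\ge\Delta_{\min,T}=T^{-r_\Delta}$ and $\bar\theta-\theta\le\eta_T\le 1/\log(eT)$,
\[
\Delta^{\bar\theta-\theta}\le e^{r_\Delta}.
\]
For the bracket term, note first that $\bar c^{-4}\le16c^{-4}$. The factor $\Delta_+^{-p_{\bar\theta}}$ may be larger than $\Delta^{-p_\theta}$. The reduced affordability condition will be defined with the constant needed so that the first stage of $\bar\kappa$ fits under $n$; this is the sense of ``reduced-affordable''. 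The remaining discrepancy between $A_{\bar\kappa}$ and $A_\kappa$ inside $[1+b u/(1+A)]^{-\nu}$ is at most a polynomial factor in $A_{\bar\kappa}/A_\kappa$ raised to $\nu_{\bar\theta}\le1/2$. The exponent mismatches $\nu_\theta$ versus $\nu_{\bar\theta}$ and $b_\theta$ versus $b_{\bar\theta}$ cost only $e^{O(1)}$ factors, as in the original grid lemma, because $u^{\eta_T}\le e$ for $u\le T$. Collecting constants should give
\[
\cR_u(\bar\kappa)\le4e^{2+2r_\Delta}\,\Gamma_\Delta^{\bar\theta}\,\cR_u(\kappa).
\]

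\emph{Assembly.} Apply Proposition~\ref{prop:restart} to $\bar\kappa$ with cap $n=\lfloor T/(2K_T^{\rm red})\rfloor\ge T/(4K_T^{\rm red})$. The cap-to-horizon step from Theorem~\ref{thm:master} then gives
\[
\cR_n(\kappa)\le(4K_T^{\rm red})^{\nu_\theta}\,\cR_T(\kappa).
\]
Selection adds $2r_{q_T}+4L\delta$ exactly as in the master proof. If $\Gamma_\Delta=O(1)$, the sharing factor becomes $O(\log^{2\nu_\theta}T)$. With $\delta=T^{-1}$ we have $B_K=O(\log T)$, so
\[
q_T=\Omega\bigl(T/(K_T^{\rm red}\log T)\bigr)=\Omega(T/\log^3T),
\]
and therefore $r_{q_T}=O(L\log^{3/2}T/\sqrt T)$.

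\emph{Main obstacle.} The hard step is the bracket comparison, because $A_{\bar\kappa}$ involves $\Delta_+^{-p_{\bar\theta}}$ and not $\Delta^{-p_\theta}$. A larger gap bound makes $A$ smaller, which helps, but the change of exponent and the smaller $c$ must still be absorbed. I would first handle this by monotonicity: show that the map $(c,\Delta)\mapsto\cR_u$ scales at most like $c\,\Delta^{\theta}$ when $u\gg A$, and like $c\,\Delta^\theta$ times a bounded factor otherwise. All of the $\Gamma_\Delta$-dependence then sits in the prefactor $\Gamma_\Delta^{\bar\theta}$.
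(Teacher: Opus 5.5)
Your overall structure matches the paper's: the reduced neighbor $(\bar\theta,\bar c,\Delta_+)$, a prefactor bound by $\Gamma_\Delta^{\bar\theta}$, and a rerun of the master composition with $K_T^{\rm red}$, $n\ge T/(4K_T^{\rm red})$ and $q=\Omega(T/\log^3T)$. The counting and the certification rate are fine.

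\textbf{The bracket comparison is not done.} You flag this step as the main obstacle and leave it at ``collecting constants should give''. As a result, neither the reduced-affordability condition nor the constant $4e^{2+2r_\Delta}$ is actually established. The proposed case split on $u\gg A$ is also unnecessary. What is missing is the exact cancellation
\[
\frac{A_{\bar\kappa}}{A_\kappa}=\Bigl(\frac{c}{\bar c}\Bigr)^{4}\,\Gamma_\Delta^{-(4\bar\theta-1)}\,\Delta^{-4(\bar\theta-\theta)}.
\]
Here the gap mismatch enters with a favorable sign, since $\Gamma_\Delta\ge1$. The exponent change costs only $\Delta^{-4(\bar\theta-\theta)}\le T^{4r_\Delta\eta_T}\le e^{4r_\Delta}$, because of the shrinking mesh and the polynomial floor $\Delta\ge T^{-r_\Delta}$. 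This gives $1+A_{\bar\kappa}\le 16e^{4r_\Delta}(1+A_\kappa)$, which has two consequences:
\begin{enumerate}
\item It is exactly what reduced affordability must require: $16e^{4r_\Delta}(1+A_\kappa)\le n$ forces $\lceil A_{\bar\kappa}\rceil\le n$.
\item Writing $Q_u(\kappa)=1+b_\theta u/(1+A_\kappa)$ and using $b_{\bar\theta}\ge b_\theta$, it gives $Q_u(\bar\kappa)\ge Q_u(\kappa)/(16e^{4r_\Delta})$. Hence
\[
Q_u(\bar\kappa)^{-\nu_{\bar\theta}}\le(16e^{4r_\Delta})^{\nu_{\bar\theta}}\,Q_u(\kappa)^{-\nu_\theta}\,Q_u(\kappa)^{\nu_\theta-\nu_{\bar\theta}}\le 4e^{1+2r_\Delta}\,Q_u(\kappa)^{-\nu_\theta},
\]
using $\nu_{\bar\theta}\le1/2$ and, for $u\le T$, $Q_u(\kappa)\le e^{3/4}T$ with $\nu_\theta-\nu_{\bar\theta}\le\eta_T$.
\end{enumerate}
For the prefactor, use $\Delta\le1$ directly: $\bar c\,\Delta_+^{\bar\theta}/(c\Delta^\theta)\le\Gamma_\Delta^{\bar\theta}\Delta^{\bar\theta-\theta}\le\Gamma_\Delta^{\bar\theta}$. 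Your estimate $\Delta^{\bar\theta-\theta}\le e^{r_\Delta}$ points the wrong way there and only inflates the constant; that type of estimate belongs in the $A$-ratio. Together these give $\cR_u(\bar\kappa)\le4e^{1+2r_\Delta}\Gamma_\Delta^{\bar\theta}\cR_u(\kappa)$, within the stated constant. Your monotonicity idea works once made precise in two steps. First round $(\theta,c)$ at fixed $\Delta$. Then raise $\Delta$ to $\Delta_+$: this multiplies $A$ by $\Gamma_\Delta^{-(4\bar\theta-1)}\le1$ and the prefactor by $\Gamma_\Delta^{\bar\theta}$, so the bracket only improves.

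\textbf{The validity step fails as written.} The pointwise nesting $c z^\theta\ge\bar c z^{\bar\theta}$ needs $z=F(x)-f^\star\le1$ at realized points. The setting explicitly imposes no realized-gap bound. So you cannot conclude that $\bar\kappa$ is itself a valid pointwise KL description on $\mathcal V$ and then apply Proposition~\ref{prop:restart} to it. Instead, close the induction at the expectation level using the original $(\theta,c)$:
\begin{enumerate}
\item Take deterministic targets $\bar D_k=\Delta_+e^{-k/(4\bar\theta)}\le1$, with base case $\Delta_0\le\Delta\le\Delta_+$.
\item The rounded stage gives $(\E S(x_{k+1})^2)^{1/2}\le\bar c\,\bar D_{k+1}^{\bar\theta}$.
\item The stochastic KL conversion for $(\theta,c)$ at $x_{k+1}\in\mathcal V$ then gives
\[
c\,\bigl(\E[F(x_{k+1})-f^\star]\bigr)^\theta\le\bar c\,\bar D_{k+1}^{\bar\theta}\le c\,\bar D_{k+1}^{\theta},
\]
that is, $\E[F(x_{k+1})-f^\star]\le\bar D_{k+1}$.
\end{enumerate}
This is the recursion-validity transfer the paper uses. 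With it and the $A$-ratio bound above, the rest of your assembly goes through.
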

Thus, a shared gap bound trades independent gap adaptation for a smaller
portfolio, while paying the explicit mismatch
$\Gamma_\Delta^{\bar\theta}$. The reduction is therefore structural rather
than uniformly beneficial: a looser shared bound decreases portfolio size but
can simultaneously worsen the comparator through the mismatch factor. The
exact comparison is in Appendix~\ref{app:grid}.

\paragraph{Scope.}
The guarantee is horizon-aware and controls expected stationarity with known
$\rho$ and $L$. High-probability tuned optimization, unknown oracle-scale
adaptation, and minimax optimality of the adaptive overhead remain outside the
present result.
% END sections/05_main_results

% BEGIN sections/06_proof_ideas
\section{Proof Ideas and Rate Comparison}
\label{sec:proofideas}

The proof composes four interfaces; complete arguments appear in
Appendices~\ref{app:restart}--\ref{app:master}. First, the known-description
restart bound is written under an external call cap $u$, so atomic execution
yields $\cR_u(\kappa)$ for every affordable valid description. This cap-level
form is essential because each portfolio candidate receives only a fraction
of the full horizon. Second, one-sided rounding
$
  \bar\theta\ge\theta,
  \bar c\le c,
  \bar\Delta\ge\Delta
$
preserves the restart recursion, while the $O(1/\log T)$ exponent mesh gives
\[
  A_{\bar\kappa}\le C_AA_\kappa,
  \qquad
  \cR_u(\bar\kappa)\le C_{\rm grid}\cR_u(\kappa).
\]
Thus, every affordable continuous comparator has a recursion-valid grid
neighbor whose finite-horizon guarantee changes by only a constant factor.

Third, conditioned on a trained candidate, fresh stochastic projected descent
on its strongly convex proximal subproblem, with proximal solution
$y_x^\star=\operatorname{prox}_{f/(2\rho)}(x)$, gives
\[
  \E\|y_q-y_x^\star\|^2
  =O\!\left(\frac{L^2}{\rho^2q}\right),
\]
without boundedness of $X$. Because certification uses fresh samples
conditional on the trained portfolio, the candidates themselves may be random;
median amplification and a union bound then yield simultaneous certificates.
Finally, on this event,
$
  S(x_{\widehat j})\le S(x_j)+2r_{q_T}
$
for every candidate $j$. Combining this deterministic selection inequality
with the grid comparison and $n\ge T/(4K_T)$ yields
Theorem~\ref{thm:master} and
$\widetilde O(T^{-\theta/(4\theta-1)})$ stationarity for every fixed valid
affordable description.

The reduced portfolio applies the same argument with a shared conservative
gap bound $\Delta_+$. Writing
$\Gamma_\Delta=\Delta_+/\Delta$, removing the independent gap grid reduces
the portfolio from $O((\log T)^3)$ to $O((\log T)^2)$ candidates, with the
scale mismatch appearing explicitly as
$\Gamma_\Delta^{\bar\theta}$ in Corollary~\ref{cor:reduced}. The reduction
therefore converts controlled prior scale information into larger
per-candidate optimization and certification budgets, while retaining an
explicit accounting of the gap mismatch.

For rate positioning, generic stochastic subgradient methods provide
$O(T^{-1/4})$ Moreau stationarity, while known-geometry ERdA achieves the
KL-dependent rate $O(T^{-\nu_\theta})$. CARP-KL extends this rate guarantee
to observable competition over unknown and potentially nonunique KL
descriptions, preserving the known-geometry polynomial exponent up to
logarithmic factors while charging candidate construction, certification,
and selection to a single primitive-oracle budget.
% END sections/06_proof_ideas

% BEGIN sections/07_experiments
\section{Empirical Diagnostics}
\label{sec:experiments}

Our experiments study the finite-budget behavior behind the oracle inequality:
how competitive the returned CARP-KL point is as the primitive-call horizon
grows, how much is lost relative to a geometry-informed oracle, and how
portfolio size affects the resources available for optimization and
certification. The goal is diagnostic rather than asymptotic rate
verification. All protocols are frozen before evaluation, every optimization
and certification call is charged to the primitive stochastic-subgradient
budget, and prespecified negative outcomes are retained.

We consider
\[
  f_p(x)=\frac{|x|^p}{p},
  \qquad
  p\in\{2,3,4\},
\]
together with a near-flat endpoint (FLAT), giving a controlled progression of
KL geometries. Appendix~\ref{app:experiments} specifies the objectives,
stochastic oracle, frozen hyperparameters, evaluation seeds, offline
stationarity metric, and additional diagnostics.

\paragraph{Competitive behavior across budgets.}
We compare CARP-KL with Oracle-KL, a geometry-informed known-description
benchmark, together with tuned SGD, Attia--Koren-style LogSearch,
AdaGrad-Norm, and a source-policy-faithful Gao--Deng specialization. The
prospectively frozen budget grid is
$
  \log_2T\in\{15,17,19,20,21,23,25\},
$
with the same 20 evaluation seeds at every budget. Comparator hyperparameters
are selected using disjoint tuning seeds and then frozen before evaluation.
All methods are compared under primitive stochastic-subgradient call
accounting; CARP-KL additionally pays for its certification calls within the
same budget.

Figure~\ref{fig:competitive-scaling} shows the resulting six-method
trajectories. Here, P2--P4 denote the power objectives
\(f_p(x)=|x|^p/p\) with \(p\in\{2,3,4\}\), respectively.
The trajectories reveal a clear finite-budget regime structure. CARP-KL
exhibits a pronounced small-budget transition, followed on P2--P4 by
generally decreasing absolute stationarity across the moderate-to-large
tested budgets, while the tuned and adaptive comparators remain competitive
throughout the sweep. These results characterize the returned performance
of the complete adaptive procedure, with portfolio sharing, fresh
certification, and final selection all charged to the same primitive-oracle
budget. The comparison therefore evaluates the operational cost of
unknown-geometry adaptation rather than an isolated restart schedule.

\begin{figure}[t]
\centering
\includegraphics[width=.81\columnwidth]
{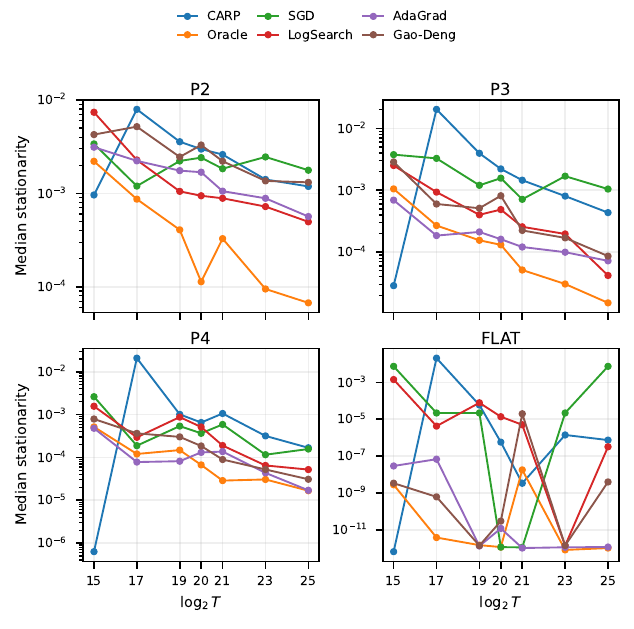}
\caption{\textbf{Competitive behavior across primitive-oracle budgets.}
Median offline Moreau stationarity over 20 frozen evaluation seeds for six
methods at
$\log_2T\in\{15,17,19,20,21,23,25\}$
(lower is better; logarithmic vertical scale).
CARP-KL exhibits a pronounced small-budget transition and is not globally
monotone; its absolute stationarity generally decreases on P2--P4 over the
moderate-to-large tested budgets. FLAT approaches numerical floors and is
interpreted together with dispersion diagnostics in
Appendix~\ref{app:experiments}.}
\label{fig:competitive-scaling}
\end{figure}

\paragraph{Gap to geometry-informed restart.}
To isolate the empirical cost of adaptation from differences among practical
baselines, we use the prespecified seed-paired diagnostic
\[
  G_p(T)
  =
  \operatorname{median}_s
  \log
  \frac{S_{\rm CARP}(T,s)}
       {S_{\rm Oracle}(T,s)}.
\]
The CARP-KL/Oracle-KL gap remains substantial and does not uniformly contract
across the seven tested budgets. The finite-budget results therefore
characterize a persistent separation between the returned adaptive solution
and the geometry-informed oracle over the tested range. This behavior is
consistent with Corollary~\ref{cor:rate}, which preserves the comparator's
\emph{polynomial stationarity exponent} up to logarithmic factors and
finite-horizon constants. The guarantee concerns rate preservation, not
convergence of the CARP-KL/Oracle-KL ratio to one.

For FLAT, numerical floors make absolute stationarity and seed dispersion
essential complements to paired ratios. Appendix~\ref{app:experiments}
reports the complete paired-gap trajectory together with these diagnostics.
The finite-budget experiments characterize the behavior of the complete
adaptive procedure; the KL-dependent asymptotic exponent is established
by the theoretical analysis.

\paragraph{Portfolio size and resource reallocation.}
Corollary~\ref{cor:reduced} predicts a structural change in resource
allocation when a common conservative initial-gap bound removes one portfolio
coordinate. We probe this consequence directly by comparing the full
three-dimensional CARP-KL portfolio with its reduced two-dimensional
counterpart on
$
  \log_2T\in\{15,17,19,20\},
$
using 20 evaluation seeds and otherwise matched accounting.

At $T=2^{20}$, the portfolio size decreases from $81$ to $27$ candidates,
leaving substantially more primitive calls for both optimization and
certification of each candidate. Median stationarity is lower for the reduced
portfolio on P2--P4 and higher on FLAT at this budget; across the four tested
budgets, however, the effect varies by problem.
Proposition~\ref{prop:reduced-tradeoff} characterizes the two competing
effects: reducing the portfolio improves both the per-candidate optimization
budget and the certification resolution, while replacing the independently
adapted gap scale by the shared bound incurs the explicit penalty
$\Gamma_\Delta^{\bar\theta}$. In particular, the proposition gives a
sufficient condition under which the reduced-portfolio bound for the same
comparator is no larger than its full-portfolio counterpart. The experiments
illustrate the complementary finite-budget regime, in which the resource
gain need not dominate the scale-mismatch cost uniformly across problems.

\paragraph{Where is the remaining finite-budget loss?}
A separate mechanism diagnostic at $T=2^{22}$ distinguishes candidate
generation from candidate recovery. Strong candidates are present in the
CARP-KL portfolio, but the finally selected candidate can remain far from the
best realized rank, while certification consumes roughly half of the
primitive-call budget. Replay analyses, selector ablations, and heavy-tail
stress tests reinforce the same finite-budget picture: in the tested
construction, certified selection is a principal \emph{observed} bottleneck
once strong candidates are available.

Proposition~\ref{prop:selection-resolution} formalizes this distinction.
Under uniform certification, the selected point competes directly with the
best realized candidate up to a resolution term of order
$
  L\sqrt{K_T\log(K_T/\delta)/T}.
$
Moreover, on the simultaneous certificate event, candidates separated from
the realized best by more than the corresponding certificate resolution
cannot be selected. Thus the presence of a strong candidate does not by
itself guarantee its recovery when candidate gaps lie at or below the
finite-budget resolution scale. This explains the observed separation between
candidate quality and returned performance and motivates allocating
certification effort more selectively across candidates.
Appendix~\ref{app:experiments} reports the full evidence, including negative
and nonmonotone outcomes.
% END sections/07_experiments

% BEGIN sections/08_limitations
\section{Limitations and Conclusion}
\label{sec:limitations}

CARP-KL provides same-oracle, budget-accounted adaptation to unknown and
potentially nonunique KL descriptions in stochastic weakly-convex
optimization. It competes with valid affordable descriptions and preserves
their known-description polynomial Moreau-stationarity rates up to logarithmic
factors. With a common conservative initial-gap bound, the portfolio reduces
from $O((\log T)^3)$ to $O((\log T)^2)$ candidates, with an explicit
scale-mismatch penalty.

The guarantee is horizon-aware and projected, under known $(\rho,L,T)$,
declared parameter ranges, a nonempty affordable comparator class, and
common-region KL validity. Within this scope, CARP-KL preserves the
known-description polynomial exponent while making competition among unknown
KL descriptions observable and executable under the same primitive-oracle
budget. The experiments show that strong candidates can coexist with imperfect
recovery and that smaller portfolios reallocate budget without uniformly
improving the returned point. These findings motivate adaptive certification
and portfolio allocation. More broadly, when the relevant geometry is
nonunique and candidate quality is not directly observable,
\emph{certified selection} is itself part of the adaptation problem.
% END sections/08_limitations

\bibliography{references}

\clearpage
\appendix
\section*{Supplementary Analysis}

% BEGIN appendices/app_a_related_notation
\section{Extended Related Work and Notation}
\label{app:related}

This appendix develops the positioning summarized in
Section~\ref{sec:related} and fixes the notation used throughout the
supplementary analysis. The key distinction is between robustness or
adaptation of a prescribed optimization schedule and observable competition
among multiple candidate KL descriptions. Theorem~\ref{thm:master} addresses
the latter under a single primitive-oracle budget: candidate generation,
fresh stationarity certification, and final selection all use the same
primitive stochastic-subgradient oracle, with every oracle call charged to
the common horizon.

\subsection{Extended positioning}

\begin{table}[!htp]
\caption{Methodological precedents and their relation to the present result.
The comparison records the adaptation mechanism, information model, and
performance target relevant to Theorem~\ref{thm:master}.}
\label{tab:extended-related}
\centering

\resizebox{0.63\linewidth}{!}{%
\begin{tabular}{@{}p{0.17\textwidth}p{0.20\textwidth}p{0.26\textwidth}p{0.48\textwidth}@{}}
\toprule
Work
& Adaptation device
& Setting and reported target
& Relation to the present theorem \\
\midrule

\citet{roulet2017sharpness}
& search over unknown sharpness parameters
& convex first-order optimization; objective error
& establishes a precedent for adapting restart schemes to unknown sharpness;
CARP-KL instead targets Moreau stationarity in stochastic weakly-convex
optimization and certifies competing candidate outputs through the primitive
oracle
\\
\midrule

\citet{adcock2025restarts}
& search over multiple sharpness parameters
& convex optimization under unknown sharpness descriptions
& demonstrates joint adaptation to geometric parameters in a convex setting;
CARP-KL addresses KL-description competition under stochastic weak convexity
and an explicit certification budget
\\
\midrule

\citet{renegar2022restart}
& restart based on observable progress
& convex optimization; objective-based progress
& provides an observable restart mechanism when objective-based progress is
available; CARP-KL constructs a stationarity certificate from the primitive
stochastic-subgradient oracle instead
\\
\midrule

\citet{fercoq2019adaptive}
& adaptive or variable-period restart
& convex smooth/composite optimization with observable progress measures
& develops adaptive restart from observable optimization progress; the
present setting replaces such progress information by fresh same-oracle
Moreau-stationarity certification
\\
\midrule

\citet{xu2017adaptive}
& search over unknown growth parameters
& convex finite-sum optimization under error-bound structure
& provides an unknown-geometry adaptation precedent; CARP-KL treats a joint
KL description and selects among the resulting stochastic candidate outputs
\\
\midrule

\citet{yang2018rsg}
& restarted stochastic subgradient stages
& convex stochastic subgradient optimization under local error bounds
& establishes stochastic restart acceleration under growth structure;
CARP-KL extends the adaptation question to weakly-convex Moreau stationarity
with unknown KL descriptions and certified output selection
\\
\midrule

\citet{fatkhullin2022sharp}
& KL/sharpness-based stochastic acceleration
& stochastic optimization under additional regularity or oracle structure
& demonstrates KL-dependent stochastic acceleration; the present theorem
focuses on adaptation and observable candidate competition under the stated
primitive-oracle model
\\
\midrule

\citet{xie2026adaptive}
& online adaptation of the proximal parameter
& stochastic weakly-convex optimization with unknown weak-convexity
parameter; Moreau-envelope stationarity
& adapts to an unknown weak-convexity scale using stochastic candidate and
function-difference information; CARP-KL instead assumes $\rho$ known and
adapts over unknown KL descriptions using only the primitive
stochastic-subgradient oracle for both optimization and certification
\\
\midrule

\citet{li2026optimal}
& deterministic oracle-complexity lower bound
& Lipschitz weakly-convex optimization; Moreau-envelope stationarity
& establishes the generic $\varepsilon^{-4}$ deterministic complexity under
an oracle returning function values and full subdifferentials; it provides
context for generic weakly-convex stationarity rather than a lower bound for
KL adaptation or certified selection
\\
\midrule

\citet{atenas2026proximal}
& proximal stochastic subgradient with adaptive sampling and line search
& nonsmooth nonconvex stochastic optimization under KL geometry; trajectory
and function-value convergence
& exploits KL geometry to obtain whole-trajectory convergence and explicit
rates under a sample-average and line-search framework; CARP-KL studies
finite-horizon competition among unknown KL descriptions under a fixed
primitive-oracle budget
\\
\midrule

\citet{jiang2026nonsmooth}
& Bregman proximal and model-based stochastic minimization
& nonsmooth relatively weakly-convex composite optimization; stationarity
& develops deterministic and stochastic methods for a broader relative
weak-convexity model, including generic $\varepsilon^{-4}$ stochastic
complexity; the present theorem addresses KL-description adaptation and
same-oracle candidate certification
\\
\midrule

\citet{elhishi2026restart}
& KL-dependent restart with robustness to schedule misspecification
& nonsmooth stochastic weakly-convex optimization; KL-accelerated convergence
& establishes KL-dependent restart acceleration and near-adaptivity through
robustness to substantial schedule misspecification; CARP-KL addresses the
complementary problem of observable competition among unknown and potentially
nonunique KL descriptions, with training and certification charged to one
primitive-oracle budget
\\
\midrule

\citet{carmon2022making}
& certification for parameter-free optimization
& stochastic optimization with unknown problem parameters
& establishes certification as a mechanism for parameter-free adaptation;
CARP-KL develops the corresponding primitive-oracle certificate for
Moreau stationarity and couples it to a validity-preserving KL portfolio
\\
\midrule

\citet{attia2024free}
& generic hyperparameter search
& parameter-free stochastic optimization
& provides a general search paradigm for unknown algorithmic parameters;
CARP-KL specializes the adaptation mechanism to one-sided KL discretization
and same-oracle Moreau-stationarity selection
\\
\midrule

\citet{carmon2024price}
& lower bounds for adaptation
& stochastic convex optimization
& establishes that adaptation can carry nonconstant cost in other stochastic
optimization settings and motivates explicit accounting of the portfolio and
certification overhead in the present theorem
\\

\bottomrule
\end{tabular}%
}

\end{table}

\paragraph{Robust schedules and observable competition.}
Robustness to schedule misspecification and observable competition solve
distinct adaptation problems. A KL-dependent restart rule can remain effective
under inaccurate geometric parameters without identifying which of several
random candidate outputs should ultimately be returned. CARP-KL addresses this
second problem directly. Multiple KL descriptions generate candidate
trajectories, while neither the objective gap nor
\[
  S(x)=\|\nabla f_{1/(2\rho)}(x)\|
\]
is supplied by the primitive oracle. Theorem~\ref{thm:master} therefore analyzes the complete interface
from candidate generation through fresh certification to final selection,
with all primitive calls charged to a single horizon. This complements
robustness-based KL restart results by making competition among candidate
descriptions observable and explicitly budget-accounted.

\paragraph{Adaptation over nonunique KL descriptions.}
The comparator is a KL description rather than an estimated latent parameter.
An objective may satisfy the required KL inequality for several triples
\[
  \kappa=(\theta,c,\Delta)
\]
with different exponents, constants, stage scales, and finite-horizon
guarantees. Accordingly, Theorem~\ref{thm:master} competes over the complete
class of valid affordable descriptions. This formulation naturally captures
finite-horizon crossings: two valid descriptions can exchange their ordering
as the horizon changes because both their polynomial exponents and their
finite-budget constants enter the restart guarantee.

\paragraph{Oracle and performance-metric separation.}
The information model is part of the comparison. The primitive
oracle~\eqref{eq:oracle} returns a stochastic subgradient with a uniform
second-moment bound. Models that additionally expose function values,
function differences, finite-sum components, proximal maps, or stronger
gradient information can support different adaptation and validation
mechanisms. Likewise, objective error, distance to a solution set, gradient
mappings, function-value convergence, and Moreau stationarity encode
different performance criteria. The comparisons above therefore align each
method with its stated oracle and target before identifying its relation to
Theorem~\ref{thm:master}.

\paragraph{Sharpness terminology and exponent conventions.}
KL, PL or gradient-domination, H\"olderian error-bound, and sharpness
conditions are expressed through different quantities and exponent
conventions across the literature. In this manuscript, the geometric
description is fixed by
\[
  S(x)\geq
  c\bigl(F(x)-f^\star\bigr)^\theta,
  \qquad
  \theta\in[1/2,1],
\]
and the resulting restart exponent is
\[
  \nu_\theta
  =
  \frac{\theta}{4\theta-1}.
\]
All rate comparisons are interpreted through this definition together with
the corresponding oracle model and performance metric.

\subsection{Notation and proof conventions}

Throughout,
\[
  F=f_{1/(2\rho)},\qquad
  S(x)=\|\nabla F(x)\|,\qquad
  \Delta_0=F(x_0)-f^\star.
\]
A continuous KL description is written
\[
  \kappa=(\theta,c,\Delta).
\]
Its one-sided full-grid neighbor is
\[
  \bar\kappa
  =
  (\bar\theta,\underline c,\bar\Delta),
\]
with
\[
  \bar\theta\geq\theta,\qquad
  \underline c\leq c,\qquad
  \bar\Delta\geq\Delta.
\]
The full product grid is denoted by
\[
  \cG_T,
  \qquad
  K_T=|\cG_T|.
\]

For a primitive-call cap $u$, the known-description restart guarantee is denoted by $\cR_u(\kappa),$
and
\[
  \nu_\theta=\frac{\theta}{4\theta-1}
\]
denotes its polynomial exponent. The class
$\Kaff$ contains the valid continuous descriptions whose one-sided neighbors
are affordable under the per-candidate training allocation. Appendix
\ref{app:restart} establishes the known-description cap guarantee, while
Appendix~\ref{app:grid} transfers it to the corresponding recursion-valid
grid neighbor.

For a portfolio of $K$ candidates, $n$ and $m$ denote the per-candidate
training and certification allocations. Certification uses $B_K$ independent
blocks of length $q_T$, and $r_{q_T}$ denotes the simultaneous confidence
radius. For candidate $j$, $x_j$ denotes its trained output and $\widehat S_j$
its fresh same-oracle stationarity certificate. The selected output is
\[
  \widehat x_T=x_{\widehat j}.
\]
Candidate training runs may be dependent or share randomness. The
concentration argument requires freshness of the certification samples
conditional on the trained portfolio.

For the reduced construction, a common conservative gap bound
$\Delta_+$ satisfies
\[
  \Delta_+\geq\Delta,
\]
and
\[
  \Gamma_\Delta
  =
  \frac{\Delta_+}{\Delta}
\]
denotes the resulting scale mismatch. The reduced neighbor is written
\[
  \widetilde\kappa
  =
  (\bar\theta,\underline c,\Delta_+),
\]
and the corresponding two-dimensional grid and its cardinality are
\[
  \cG_T^{\rm red},
  \qquad
  K_T^{\rm red}=|\cG_T^{\rm red}|.
\]

These conventions separate the three objects used by the proof:
the valid continuous description $\kappa$, its executable one-sided grid
neighbor, and the random candidate output generated by that neighbor. This
separation is maintained throughout the restart, discretization,
certification, and portfolio-selection arguments.
% END appendices/app_a_related_notation

% BEGIN appendices/app_b_restart
\section{Known-Description Restart Analysis}
\label{app:restart}

This appendix proves Proposition~\ref{prop:restart} and establishes the
known-description restart interface used throughout the portfolio analysis.
The construction follows a deterministic upper-gap trajectory, implements
each restart stage with an integer number of primitive oracle calls, and
converts any external per-candidate call cap into an explicit
Moreau-stationarity guarantee. The resulting bound depends only on the KL
description and the allocated primitive-oracle budget; neither objective
values nor realized envelope gaps are required during execution.

Throughout this appendix, fix a valid description
\[
  \kappa=(\theta,c,\Delta),\qquad
  \theta\in[1/2,1],\quad c>0,\quad
  0<\Delta_0\leq\Delta\leq1.
\]
The same analysis applies when $\Delta_0=0$ and a positive upper bound
$\Delta$ is used. If $\Delta_0=\Delta=0$, then $x_0$ minimizes the smooth
envelope $F$, so $S(x_0)=0$ and the stationarity target is already attained.

Let $\mathcal F_k$ denote the sigma-field generated by all randomness before
stage $k$. Conditional on $\mathcal F_k$, the stage starts from $x_k$ and runs
$t_k$ projected stochastic-subgradient calls with constant step
\[
  \alpha_k=\frac{\beta_k}{\sqrt{t_k+1}}.
\]
As in the standard weakly-convex stochastic-subgradient construction
\citep[Corollaries 2.2 and 2.6]{davis2019stochastic}, the output $x_{k+1}$ is
a randomly selected inner iterate with probabilities proportional to the
step sizes. This randomization is part of the base method and is retained by
every portfolio candidate.

\subsection{Conditional one-stage inequality}
\label{app:restart:onestage}

The restart analysis begins from the projected stochastic-subgradient
interface. The projected setting permits arbitrary $\beta_k>0$, which is
exactly the flexibility required by the deterministic upper-gap schedule.

\begin{lemma}[Conditional projected-SGD bound]
\label{lem:base-sgd}
Suppose \eqref{eq:oracle} holds and $x_k$ is $\mathcal F_k$-measurable. For
any integer $t_k\geq1$ and any $\mathcal F_k$-measurable $\beta_k>0$, the
stage output obeys
\begin{equation}
 \E\!\left[S(x_{k+1})^2\mid\mathcal F_k\right]
 \leq
 \frac{2(F(x_k)-f^\star)}{\beta_k\sqrt{t_k+1}}
 +\frac{2\rho L^2\beta_k}{\sqrt{t_k+1}}.
 \label{eq:base-stage}
\end{equation}
\end{lemma}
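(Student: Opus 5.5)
We follow the Davis--Drusvyatskiy Lyapunov argument, using the Moreau envelope $F=f_{1/(2\rho)}$ as potential. Everything is done conditionally on $\mathcal F_k$, so $x_k$ and $\beta_k$ act as constants. Write $z_0=x_k$, $z_{i+1}=\Pi_X[z_i-\alpha_k G(z_i,\xi_i)]$ for $i=0,\dots,t_k-1$, and $\hat z_i=\prox_{\lambda f}(z_i)$ with $\lambda=1/(2\rho)$.

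\textbf{One-step descent.} By definition of the envelope,
\[
F(z_{i+1})\le g(\hat z_i)+\rho\|\hat z_i-z_{i+1}\|^2 .
\]
Since $\hat z_i\in X$ and projection is nonexpansive, $\|z_{i+1}-\hat z_i\|\le\|z_i-\alpha_kG_i-\hat z_i\|$. Expanding the square, taking conditional expectation, and using \eqref{eq:oracle} with $v_i=\E G_i\in\partial g(z_i)$ gives
\[
\E\|z_{i+1}-\hat z_i\|^2\le\|z_i-\hat z_i\|^2-2\alpha_k\langle v_i,z_i-\hat z_i\rangle+\alpha_k^2L^2 .
\]
Weak convexity of $g$ gives $\langle v_i,z_i-\hat z_i\rangle\ge g(z_i)-g(\hat z_i)-\tfrac{\rho}{2}\|z_i-\hat z_i\|^2$. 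The $\rho$-strong convexity of the proximal subproblem (already noted in Section~\ref{sec:setting}) gives
\[
g(z_i)-g(\hat z_i)-\rho\|z_i-\hat z_i\|^2\ge\rho\|z_i-\hat z_i\|^2 .
\]
Combining these and using $\|z_i-\hat z_i\|=S(z_i)/(2\rho)$ yields
\[
\E[F(z_{i+1})\mid z_i]\le F(z_i)-c_0\,\alpha_k S(z_i)^2+\rho\alpha_k^2L^2 .
\]
The constant $c_0$ depends on bookkeeping (Davis--Drusvyatskiy obtain $1/4$ with $\lambda=1/(2\rho)$). I would track it to reach the stated factor $2$; if it comes out as $c_0=1/2$, the target inequality follows exactly.

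\textbf{Telescoping and output rule.} Sum over $i=0,\dots,t_k-1$ with the constant step, and use $F(z_{t_k})\ge f^\star$. The output $x_{k+1}$ is the inner iterate chosen with probability proportional to the step sizes, which for a constant step means uniform on the iterates. Then
\[
\E[S(x_{k+1})^2\mid\mathcal F_k]\le\frac{F(x_k)-f^\star}{c_0\,\alpha_k N}+\frac{\rho L^2\alpha_k}{c_0},
\]
where $N$ is the number of iterates averaged. If the selection ranges over $t_k+1$ points (the inner indices including the last), then $N=t_k+1$.

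\textbf{Substituting the step.} Put $\alpha_k=\beta_k/\sqrt{t_k+1}$. Then $\alpha_kN=\beta_k\sqrt{t_k+1}$, and the two terms become $\frac{F(x_k)-f^\star}{c_0\beta_k\sqrt{t_k+1}}$ and $\frac{\rho L^2\beta_k}{c_0\sqrt{t_k+1}}$. With $c_0=1/2$ this is exactly \eqref{eq:base-stage}.

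\textbf{Main obstacle.} The delicate step is the constants and the index convention: it must come out as $2$ and $2\rho$ with $\sqrt{t_k+1}$, matching the range over which the output is sampled. If the sharp descent constant is only $1/4$, I would instead run the descent inequality with the envelope at parameter $\lambda$ and use its exact relation to $S$, or re-derive it via the three-point identity, to recover the factor $1/2$.

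Measurability is routine but should be stated. $\beta_k$ is $\mathcal F_k$-measurable, and the oracle noise within the stage is fresh. So the conditional expectations are justified by the tower property, and no boundedness of $X$ is needed, since only the projection contraction toward $\hat z_i\in X$ is used.
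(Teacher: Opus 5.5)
\textbf{Verdict: genuine gap, but both problems are repairable.} Your route is the paper's in substance. The paper just invokes the projected Davis--Drusvyatskiy bound at envelope parameter $1/(2\rho)$, conditionally on $\mathcal F_k$, and you are re-deriving it. But the lemma consists entirely of the constants $2$, $2\rho$ and $\sqrt{t_k+1}$, and the proposal as written does not establish them, for two concrete reasons.

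\textbf{First, your strong-convexity step is off by a factor of two.} The map $y\mapsto g(y)+\rho\|y-z_i\|^2+\iota_X(y)$ is only $\rho$-strongly convex. Growth at its minimizer therefore gives $g(z_i)-g(\hat z_i)-\rho\|z_i-\hat z_i\|^2\ge\frac{\rho}{2}\|z_i-\hat z_i\|^2$, not $\ge\rho\|z_i-\hat z_i\|^2$. For example, with $g(y)=-\frac{\rho}{2}y^2$ on $X=[-1,1]$ and $|z|\le 1/2$, one has $\hat z=2z$ and equality holds in the $\frac{\rho}{2}$ version. Combined with weak convexity, the correct bound is $\langle v_i,z_i-\hat z_i\rangle\ge\rho\|z_i-\hat z_i\|^2=S(z_i)^2/(4\rho)$, and your expansion then gives
\[
\E[F(z_{i+1})\mid z_i]\le F(z_i)-2\rho\alpha_k\langle v_i,z_i-\hat z_i\rangle+\rho\alpha_k^2L^2\le F(z_i)-\tfrac12\alpha_kS(z_i)^2+\rho\alpha_k^2L^2 .
\]
So $c_0=1/2$ exactly. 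This is the Davis--Drusvyatskiy descent factor $(\bar\rho-\rho)/\bar\rho$ at $\bar\rho=2\rho$; there is no loss to $1/4$, and your fallback plan is unnecessary. You need to carry out this computation rather than leave $c_0$ conditional.

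\textbf{Second, $N=t_k+1$ does not follow from what you telescope.} Summing over $i=0,\dots,t_k-1$ (that is, $t_k$ oracle calls) controls only $\frac{\alpha_k}{2}\sum_{i=0}^{t_k-1}\E S(z_i)^2$. The last point $z_{t_k}$ receives no $-S^2$ term, so an output sampled from $t_k+1$ points including $z_{t_k}$ is not covered. With $t_k$ calls your argument yields $N=t_k$, i.e.\ the first term of \eqref{eq:base-stage} multiplied by $(t_k+1)/t_k$.

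To obtain exactly $\sqrt{t_k+1}$, apply the descent step at all of $z_0,\dots,z_{t_k}$. That means making $t_k+1$ calls and sampling the output uniformly from those $t_k+1$ points. This is the indexing under which the cited Davis--Drusvyatskiy corollary carries $\sqrt{T+1}$ (steps $t=0,\dots,T$, here $T=t_k$). It is one call more than the ``$t_k$ calls'' per stage described before the lemma, so state the convention explicitly.

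Alternatively, the pointwise bound $F(z)-f^\star\ge S(z)^2/(4\rho)$ at $z=z_{t_k}$ lets you include the last iterate with only $t_k$ calls. This works only when $\alpha_k\le 1/(2\rho)$, which an arbitrary $\beta_k>0$ does not guarantee.

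With these two repairs your argument is a complete, self-contained version of the cited proof.
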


\begin{proof}
Conditional on $\mathcal F_k$, the starting point and step are fixed.
Applying the projected version of the standard weakly-convex stochastic
subgradient bound with envelope parameter $1/(2\rho)$ gives
\eqref{eq:base-stage}. The oracle samples inside the stage are fresh, so their
conditional law satisfies \eqref{eq:oracle}. The projected result permits
arbitrary positive $\beta_k$, completing the conditional application.
\end{proof}

The restart schedule uses a deterministic upper bound on the expected
envelope gap. This converts the conditional inequality into a stage guarantee
whose parameters are fixed before the stage is executed.

\begin{lemma}[One stage from a deterministic gap target]
\label{lem:deterministic-target}
Let $D_k>0$ be deterministic and suppose
\begin{equation}
  \E[F(x_k)-f^\star]\leq D_k.
  \label{eq:target-hypothesis}
\end{equation}
Choose
\begin{equation}
  \beta_k=\frac{\sqrt{D_k}}{\sqrt\rho L}.
  \label{eq:beta-target}
\end{equation}
Then
\begin{equation}
  \E S(x_{k+1})^2
  \leq
  \frac{4\sqrt\rho L\sqrt{D_k}}{\sqrt{t_k+1}}.
  \label{eq:one-stage-target}
\end{equation}
\end{lemma}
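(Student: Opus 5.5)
The plan is to take total expectations in the conditional bound of Lemma~\ref{lem:base-sgd} and then substitute the prescribed step. Since $D_k$ is deterministic, the choice \eqref{eq:beta-target} gives a deterministic, hence trivially $\mathcal F_k$-measurable, positive $\beta_k$. Lemma~\ref{lem:base-sgd} therefore applies with this $\beta_k$ and any integer $t_k\ge1$. It yields
\[
  \E\!\left[S(x_{k+1})^2\mid\mathcal F_k\right]
  \le
  \frac{2(F(x_k)-f^\star)}{\beta_k\sqrt{t_k+1}}
  +\frac{2\rho L^2\beta_k}{\sqrt{t_k+1}} .
\]

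Next I take expectations with the tower property. Because $\beta_k$ and $t_k$ are nonrandom, the right-hand side is affine in $F(x_k)-f^\star$ with deterministic coefficients. Its expectation is therefore $2\E[F(x_k)-f^\star]/(\beta_k\sqrt{t_k+1})+2\rho L^2\beta_k/\sqrt{t_k+1}$. The hypothesis \eqref{eq:target-hypothesis} bounds the first numerator by $2D_k$. Here I should note that $F(x_k)-f^\star\ge0$, since $F\ge f^\star$ pointwise, so the expectation is well defined in $[0,\infty]$ and the hypothesis makes it finite.

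Finally I insert $\beta_k=\sqrt{D_k}/(\sqrt\rho L)$. The first term becomes $2D_k\sqrt\rho L/(\sqrt{D_k}\sqrt{t_k+1})=2\sqrt\rho L\sqrt{D_k}/\sqrt{t_k+1}$. The second becomes $2\rho L^2\sqrt{D_k}/(\sqrt\rho L\sqrt{t_k+1})=2\sqrt\rho L\sqrt{D_k}/\sqrt{t_k+1}$. The two contributions are equal, which reflects that this $\beta_k$ balances the bias and variance terms. Their sum is exactly \eqref{eq:one-stage-target}.

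The only step needing care is the passage from the conditional to the unconditional statement. This is also the reason $\beta_k$ must be built from the deterministic target $D_k$ rather than the realized gap $F(x_k)-f^\star$. With a deterministic $\beta_k$ the conditional bound is linear in the gap, so Jensen-type issues or random step sizes never arise. A realized-gap step would require objective values that the algorithm does not have. Beyond this observation the argument is a direct substitution.
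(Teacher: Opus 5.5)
Your proposal is correct and matches the paper's proof: take total expectation in the conditional bound of Lemma~\ref{lem:base-sgd}, bound $\E[F(x_k)-f^\star]$ by $D_k$, and substitute $\beta_k=\sqrt{D_k}/(\sqrt\rho L)$ so that each term equals $2\sqrt\rho L\sqrt{D_k}/\sqrt{t_k+1}$. Your remarks on the nonnegativity of the gap and the deterministic choice of $\beta_k$ are extra care beyond what the paper writes, not a different route.
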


\begin{proof}
Taking total expectation in \eqref{eq:base-stage} and using
\eqref{eq:target-hypothesis}--\eqref{eq:beta-target} gives
\[
 \frac{\E[F(x_k)-f^\star]}{\beta_k}
 \leq
 \sqrt\rho L\sqrt{D_k},
 \qquad
 \rho L^2\beta_k
 =
 \sqrt\rho L\sqrt{D_k}.
\]
Substitution proves \eqref{eq:one-stage-target}. In particular, the stage
requires neither $F(x_k)$ nor $f^\star$ as algorithmic input.
\end{proof}

\subsection{Pointwise KL geometry and the stochastic recursion}
\label{app:restart:klconversion}

The pointwise KL inequality induces the expectation-level relation required by
the restart induction. This step uses both $\theta\geq1/2$ and the common-region
condition.

\begin{lemma}[Stochastic KL conversion]
\label{lem:stochastic-kl}
Let $X$ be a random point satisfying $X\in\mathcal V$ almost surely.
Under Assumption~\ref{ass:klc},
\begin{equation}
 \bigl(\E S(X)^2\bigr)^{1/2}
 \geq
 c\bigl(\E[F(X)-f^\star]\bigr)^\theta.
 \label{eq:stochastic-kl}
\end{equation}
Equivalently,
\begin{equation}
 \E[F(X)-f^\star]
 \leq
 c^{-1/\theta}
 \bigl(\E S(X)^2\bigr)^{1/(2\theta)}.
 \label{eq:stochastic-kl-inverse}
\end{equation}
\end{lemma}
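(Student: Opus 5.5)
The plan is to apply the pointwise inequality of Assumption~\ref{ass:klc} pathwise and then pass to expectations using two convexity facts; no optimization structure is needed. Since $X\in\mathcal V$ almost surely, we have $S(X)\ge c\,(F(X)-f^\star)^\theta$ almost surely. Here $F(X)-f^\star\ge0$ because $F$ is a Moreau envelope, so $\inf F=f^\star$. Both sides are nonnegative, so squaring preserves the inequality:
\[
  S(X)^2\ge c^2\,(F(X)-f^\star)^{2\theta}\qquad\text{a.s.}
\]
Taking expectations gives $\E S(X)^2\ge c^2\,\E\bigl[(F(X)-f^\star)^{2\theta}\bigr]$. If $\E S(X)^2=\infty$ there is nothing to prove, so assume it is finite.

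The key step is Jensen's inequality for the map $u\mapsto u^{2\theta}$ on $[0,\infty)$. This map is convex exactly because $2\theta\ge1$, and this is where the hypothesis $\theta\ge1/2$ enters. Jensen gives
\[
  \E\bigl[(F(X)-f^\star)^{2\theta}\bigr]\ge\bigl(\E[F(X)-f^\star]\bigr)^{2\theta}.
\]
Combining this with the previous display and taking square roots, which is monotone on $[0,\infty)$, yields \eqref{eq:stochastic-kl}.

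A minor integrability point needs care. If $\E[F(X)-f^\star]$ were infinite, Jensen would still force $\E[(F(X)-f^\star)^{2\theta}]=\infty$, contradicting finiteness of $\E S(X)^2$. So the right-hand side is finite whenever the left-hand side is, and the inequality is meaningful. Finally, \eqref{eq:stochastic-kl-inverse} follows by raising both sides of \eqref{eq:stochastic-kl} to the power $1/\theta>0$, which is monotone on nonnegative reals, and dividing by $c^{1/\theta}>0$.

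The only genuinely delicate step is the direction of Jensen. It requires $2\theta\ge1$; for $\theta<1/2$ the inequality would fail at the expectation level. The argument also uses the common-region condition, so that the pointwise KL bound applies on the whole support of $X$. I expect no further obstacles.
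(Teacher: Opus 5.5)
Your proposal is correct and follows the paper's own proof: apply the pointwise KL inequality almost surely on $\mathcal V$, square and take expectations, use Jensen for the convex map $z\mapsto z^{2\theta}$ (convex because $\theta\ge 1/2$), take square roots, and rearrange to get the inverse form. Your extra integrability remarks are sound but not needed here, since Lemma~\ref{lem:prox-localization} gives $S\le 4L$ and so $\E S(X)^2$ is always finite.
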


\begin{proof}
Pointwise KL and nonnegativity give
\[
 \E S(X)^2
 \geq
 c^2\E(F(X)-f^\star)^{2\theta}.
\]
Since $2\theta\geq1$, the map $z\mapsto z^{2\theta}$ is convex on
$\mathbb R_+$. Jensen's inequality therefore yields
\[
 \E(F(X)-f^\star)^{2\theta}
 \geq
 \bigl(\E[F(X)-f^\star]\bigr)^{2\theta}.
\]
Taking square roots proves \eqref{eq:stochastic-kl}; rearranging gives
\eqref{eq:stochastic-kl-inverse}.
\end{proof}

\begin{remark}[Common-region interface]
\label{rem:common-region-restart}
Assumption~\ref{ass:klc} places the random stage outputs in a region on which
the pointwise KL inequality holds almost surely. Lemma~\ref{lem:stochastic-kl}
then converts this pointwise geometry into the stochastic inequality consumed
by the restart recursion. Region membership is therefore an analytical
property of the comparator trajectory rather than an observable quantity
required by the algorithm.
\end{remark}

\subsection{Upper-gap induction with integer stages}
\label{app:restart:induction}

Set
\begin{equation}
 \gamma_\theta=\frac{1}{4\theta},\qquad
 p_\theta=4\theta-1,\qquad
 a_\theta=p_\theta\gamma_\theta
 =1-\frac{1}{4\theta},
 \label{eq:restart-exponents}
\end{equation}
and define
\begin{align}
 D_k&=\Delta e^{-\gamma_\theta k},
 &\beta_k&=\frac{\sqrt{D_k}}{\sqrt\rho L},
 \label{eq:restart-targets}\\
 A_\kappa&=16e\,c^{-4}\rho L^2\Delta^{-p_\theta},
 &t_k&=\left\lceil A_\kappa e^{a_\theta k}\right\rceil.
 \label{eq:restart-schedule-app}
\end{align}
The factor $e$ in $A_\kappa$ supplies the one-stage slack required to advance
the deterministic target by one restart level. Since $A_\kappa>0$, every
$t_k\geq1$. The quantities $D_k$, $\beta_k$, and $t_k$ are deterministic
functions of the candidate description and the stage index and are therefore
fixed independently of the realized objective gap.

\begin{lemma}[Upper-gap restart induction]
\label{lem:restart-induction}
For every integer $k\geq0$,
\begin{equation}
 \E[F(x_k)-f^\star]\leq D_k.
 \label{eq:gap-stage-app}
\end{equation}
Moreover, for every $k\geq1$,
\begin{equation}
 \bigl(\E S(x_k)^2\bigr)^{1/2}
 \leq
 c\Delta^\theta e^{-\theta\gamma_\theta k}.
 \label{eq:stat-stage-app}
\end{equation}
\end{lemma}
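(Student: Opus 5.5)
We argue by induction on $k$, proving \eqref{eq:gap-stage-app} for all $k\ge0$. The stationarity bound \eqref{eq:stat-stage-app} is extracted along the way from the one-stage estimate.

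\emph{Base case.} For $k=0$, $x_0$ is deterministic and $\E[F(x_0)-f^\star]=\Delta_0\le\Delta=D_0$ by validity of $\kappa$.

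\emph{Stationarity at the next stage.} Suppose $\E[F(x_k)-f^\star]\le D_k$. Since $D_k$ and $\beta_k=\sqrt{D_k}/(\sqrt\rho L)$ are deterministic, Lemma~\ref{lem:deterministic-target} applies with the integer length $t_k\ge A_\kappa e^{a_\theta k}$. It gives
\[
\E S(x_{k+1})^2\le\frac{4\sqrt\rho L\sqrt{D_k}}{\sqrt{t_k+1}}\le\frac{4\sqrt\rho L\,\Delta^{1/2}e^{-\gamma_\theta k/2}}{\sqrt{A_\kappa}\,e^{a_\theta k/2}}.
\]
Substituting $A_\kappa=16e\,c^{-4}\rho L^2\Delta^{-p_\theta}$, the factors of $\rho$ and $L$ cancel, and the right side becomes $e^{-1/2}c^2\Delta^{1/2+p_\theta/2}e^{-(\gamma_\theta+a_\theta)k/2}$. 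Two exponent identities finish this step:
\[
\tfrac12+\tfrac{p_\theta}{2}=2\theta,\qquad \gamma_\theta+a_\theta=1 .
\]
Hence $\E S(x_{k+1})^2\le e^{-1/2}c^2\Delta^{2\theta}e^{-k/2}$. Taking square roots,
\[
\bigl(\E S(x_{k+1})^2\bigr)^{1/2}\le e^{-1/4}c\Delta^\theta e^{-k/4}=c\Delta^\theta e^{-(k+1)/4},
\]
and since $\theta\gamma_\theta=1/4$ this is exactly \eqref{eq:stat-stage-app} at index $k+1$. This holds for every $k\ge0$, so it covers all indices $\ge1$.

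\emph{Closing the induction.} The output $x_{k+1}$ lies in $\mathcal V$ almost surely by the common-region part of Assumption~\ref{ass:klc}. Lemma~\ref{lem:stochastic-kl}, in the form \eqref{eq:stochastic-kl-inverse}, then gives
\[
\E[F(x_{k+1})-f^\star]\le c^{-1/\theta}\bigl(c\Delta^\theta e^{-(k+1)/4}\bigr)^{1/\theta}=\Delta e^{-(k+1)/(4\theta)}=\Delta e^{-\gamma_\theta(k+1)}=D_{k+1}.
\]
This completes the induction.

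The main point needing care is that the slack factor $e$ inside $A_\kappa$ produces exactly one extra factor $e^{-1/4}$ after taking the square root. That factor is what advances $e^{-k/4}$ to $e^{-(k+1)/4}$. If the constant bookkeeping were off, the induction would lose a constant factor at every stage and would not close. A second requirement is that the KL conversion uses only expectations, through Jensen's inequality with $2\theta\ge1$. This is why the targets $D_k$ must be deterministic and no realized gap bound is ever needed.
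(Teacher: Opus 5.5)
Your proof is correct and follows the paper's argument essentially step for step. It uses the same base case and the same application of Lemma~\ref{lem:deterministic-target} with $t_k+1\ge A_\kappa e^{a_\theta k}$, organizing the cancellation through $\tfrac12+\tfrac{p_\theta}{2}=2\theta$ and $\gamma_\theta+a_\theta=1$, and it closes the induction the same way, via the inverse stochastic KL inequality \eqref{eq:stochastic-kl-inverse} on the common region.
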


\begin{proof}
The base case follows from
\[
 F(x_0)-f^\star
 =
 \Delta_0
 \leq
 \Delta
 =
 D_0.
\]
Suppose \eqref{eq:gap-stage-app} holds at stage $k$.
Lemma~\ref{lem:deterministic-target} gives
\begin{equation}
 \E S(x_{k+1})^2
 \leq
 \frac{4\sqrt\rho L\Delta^{1/2}e^{-\gamma_\theta k/2}}
 {\sqrt{t_k+1}}.
 \label{eq:stage-before-length}
\end{equation}
% By \eqref{eq:restart-schedule-app},
% \begin{align*}
%  t_k+1
%  &\geq
%  A_\kappa e^{a_\theta k}\\
%  &=
%  16c^{-4}\rho L^2\Delta^{1-4\theta}
%  e^{4\theta\gamma_\theta}
%  e^{(4\theta-1)\gamma_\theta k}.
% \end{align*}
By \eqref{eq:restart-schedule-app},
\[
  t_k + 1 \ge A_\kappa e^{a_\theta k} = 16c^{-4}\rho L^2\Delta^{1-4\theta} e^{4\theta\gamma_\theta} e^{(4\theta-1)\gamma_\theta k}.
\]
Substituting into \eqref{eq:stage-before-length} gives
\begin{equation}
 \E S(x_{k+1})^2
 \leq
 c^2\Delta^{2\theta}
 e^{-2\theta\gamma_\theta(k+1)}.
 \label{eq:stage-stationarity-square}
\end{equation}
This proves \eqref{eq:stat-stage-app} at index $k+1$.
Lemma~\ref{lem:stochastic-kl} then yields
% \begin{align*}
%  \E[F(x_{k+1})-f^\star]
%  &\leq
%  c^{-1/\theta}
%  \left(
%  c^2\Delta^{2\theta}
%  e^{-2\theta\gamma_\theta(k+1)}
%  \right)^{1/(2\theta)}\\
%  &=
%  \Delta e^{-\gamma_\theta(k+1)}
%  =
%  D_{k+1}.
% \end{align*}
\[
  \mathbb{E}[F(x_{k+1})-f^\star]
  \le c^{-1/\theta} \left( c^2\Delta^{2\theta} e^{-2\theta\gamma_\theta(k+1)} \right)^{1/(2\theta)}
  = \Delta e^{-\gamma_\theta(k+1)}
  = D_{k+1}.
\]
The induction follows.
\end{proof}

\begin{remark}[Observable execution]
The schedule fixes $D_k$, $\beta_k$, and $t_k$ before stage $k$ is executed.
The expectation bound \eqref{eq:gap-stage-app} is used only in the analysis.
Consequently, the restart procedure requires no envelope-gap evaluation,
objective optimum, Moreau gradient, or realized stage-error test.
\end{remark}

\subsection{Completed-stage complexity}
\label{app:restart:completed}

We first express the guarantee in terms of the exact cost of completed restart
stages. Define
\[
 T_r(\kappa)
 =
 \sum_{k=0}^{r-1}t_k,\qquad
 T_0(\kappa)=0,
 \qquad
 b_\theta=e^{a_\theta}-1.
\]
Since $e^{a_\theta k}\geq1$ and $\lceil z\rceil\leq z+1$,
\begin{equation}
 A_\kappa e^{a_\theta k}
 \leq
 t_k
 \leq
 (1+A_\kappa)e^{a_\theta k}.
 \label{eq:integer-stage-sandwich}
\end{equation}
Hence, for every $r\geq1$,
\begin{equation}
 T_r(\kappa)
 \leq
 (1+A_\kappa)
 \frac{e^{a_\theta r}-1}{b_\theta}.
 \label{eq:completed-cost-upper}
\end{equation}
Combining this inequality with Lemma~\ref{lem:restart-induction} gives
\begin{align}
 \E[F(x_r)-f^\star]
 &\leq
 \Delta
 \left[
 1+\frac{b_\theta T_r(\kappa)}{1+A_\kappa}
 \right]^{-1/p_\theta},
 \label{eq:completed-gap}\\
 \bigl(\E S(x_r)^2\bigr)^{1/2}
 &\leq
 c\Delta^\theta
 \left[
 1+\frac{b_\theta T_r(\kappa)}{1+A_\kappa}
 \right]^{-\nu_\theta},
 \label{eq:completed-stat}
\end{align}
where
\[
 \nu_\theta=\frac{\theta}{p_\theta}.
\]
These bounds are indexed by the exact cost of the completed stages and
therefore incur no last-stage conversion factor.

\subsection{External-cap guarantee}
\label{app:restart:cap}

The portfolio assigns each candidate an external integer training cap. For
$u\geq0$, define
\begin{equation}
 N(u,\kappa)
 =
 \max\{r\geq0:T_r(\kappa)\leq u\}.
 \label{eq:completed-stage-count}
\end{equation}
The implementation executes complete stages only and returns the output of
stage $N(u,\kappa)$. Any calls remaining after the last completed stage stay
within the allocated cap.

\begin{lemma}[External-cap restart bound]
\label{lem:external-cap}
If
\begin{equation}
 t_0=\lceil A_\kappa\rceil\leq u,
 \label{eq:first-stage-affordable}
\end{equation}
then $N=N(u,\kappa)\geq1$ and
\begin{align}
 \E[F(x_N)-f^\star]
 &<
 e^{\gamma_\theta}\Delta
 \left[
 1+\frac{b_\theta u}{1+A_\kappa}
 \right]^{-1/p_\theta},
 \label{eq:external-gap}\\
 \bigl(\E S(x_N)^2\bigr)^{1/2}
 &<
 e^{1/4}c\Delta^\theta
 \left[
 1+\frac{b_\theta u}{1+A_\kappa}
 \right]^{-\nu_\theta}
 =
 \cR_u(\kappa).
 \label{eq:external-stat}
\end{align}
The corresponding non-strict inequalities hold as well and are used by the
main theorem.
\end{lemma}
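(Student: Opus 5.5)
The plan is to convert the completed-stage bounds \eqref{eq:completed-gap}--\eqref{eq:completed-stat} into cap-indexed bounds by losing exactly one restart level. First, \eqref{eq:first-stage-affordable} gives $T_1(\kappa)=t_0\le u$, so $N=N(u,\kappa)\ge1$ by \eqref{eq:completed-stage-count}. Finiteness of $N$ follows because $t_k\ge A_\kappa e^{a_\theta k}\to\infty$. By maximality of $N$, the next stage does not fit:
\[
  T_{N+1}(\kappa)>u .
\]

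Second, I will avoid routing through \eqref{eq:completed-stat}, which is indexed by $T_N$ and could be much smaller than $u$. Instead I use the stage-level bounds of Lemma~\ref{lem:restart-induction} directly:
\[
  \E[F(x_N)-f^\star]\le \Delta e^{-\gamma_\theta N},
  \qquad
  \bigl(\E S(x_N)^2\bigr)^{1/2}\le c\Delta^\theta e^{-\theta\gamma_\theta N}.
\]
The second bound is available since $N\ge1$. Applying \eqref{eq:completed-cost-upper} with $r=N+1$ gives
\[
  u<T_{N+1}(\kappa)\le(1+A_\kappa)\frac{e^{a_\theta(N+1)}-1}{b_\theta}.
\]
Rearranging, this becomes
\[
  e^{a_\theta(N+1)}>1+\frac{b_\theta u}{1+A_\kappa}.
\]
Raising to the power $-1/p_\theta$ and using $a_\theta/p_\theta=\gamma_\theta$ yields
\[
  e^{-\gamma_\theta N}<e^{\gamma_\theta}\Bigl[1+\frac{b_\theta u}{1+A_\kappa}\Bigr]^{-1/p_\theta}.
\]
Substituting into the gap bound proves \eqref{eq:external-gap}.

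Third, for stationarity, I raise the same inequality to the power $\theta$. Using $\theta\gamma_\theta=1/4$ and $\theta/p_\theta=\nu_\theta$, this gives
\[
  e^{-\theta\gamma_\theta N}<e^{1/4}\Bigl[1+\frac{b_\theta u}{1+A_\kappa}\Bigr]^{-\nu_\theta}.
\]
This produces \eqref{eq:external-stat} with the constant $e^{1/4}$, matching the definition of $\cR_u(\kappa)$. The non-strict versions follow trivially from the strict ones.

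The main point requiring care is the strictness and direction of the inequality chain. The strict inequality comes from $T_{N+1}>u$. It must survive the monotone decreasing maps $z\mapsto z^{-1/p_\theta}$ and $z\mapsto z^{-\nu_\theta}$, and I need to check that the exponents are positive. This holds because $p_\theta=4\theta-1\ge1$ for $\theta\in[1/2,1]$.

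A second point to verify is that the stage outputs lie in $\mathcal V$, so that Lemma~\ref{lem:restart-induction} applies through Lemma~\ref{lem:stochastic-kl}. This is guaranteed by Assumption~\ref{ass:klc} for the comparator trajectory.

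Finally, the returned point $x_N$ is random only through the run and not through $N$. Since $N$ is a deterministic function of $(u,\kappa)$ and the stage lengths $t_k$ are deterministic, the per-stage expectation bounds apply directly at index $N$ with no stopping-time issue.
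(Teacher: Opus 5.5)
Your proposal is correct and follows the paper's proof essentially step for step. Maximality of $N$ gives $u<T_{N+1}(\kappa)$, and the completed-cost bound \eqref{eq:completed-cost-upper} at $r=N+1$ yields $e^{a_\theta(N+1)}>1+b_\theta u/(1+A_\kappa)$. This is then transferred to the stage-level bounds of Lemma~\ref{lem:restart-induction} at index $N$ using $a_\theta=p_\theta\gamma_\theta$, $\theta\gamma_\theta=1/4$, and $\theta\gamma_\theta/a_\theta=\nu_\theta$. Your extra checks are also correct: positivity of the exponents and of $b_\theta$, and the fact that $N$ is deterministic, so no stopping-time issue arises. They make explicit what the paper's proof leaves implicit.
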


\begin{proof}
Condition \eqref{eq:first-stage-affordable} gives $N\geq1$.
By maximality in \eqref{eq:completed-stage-count},
\begin{equation}
 u<T_{N+1}(\kappa).
 \label{eq:cap-maximality}
\end{equation}
Applying \eqref{eq:completed-cost-upper} with $r=N+1$ gives
\[
 u<
 (1+A_\kappa)
 \frac{e^{a_\theta(N+1)}-1}{b_\theta},
\]
and hence
\begin{equation}
 e^{a_\theta(N+1)}
 >
 Q_u(\kappa)
 :=
 1+\frac{b_\theta u}{1+A_\kappa}.
 \label{eq:cap-Q}
\end{equation}
Since $a_\theta=p_\theta\gamma_\theta$,
\[
 e^{-\gamma_\theta N}
 =
 e^{\gamma_\theta}e^{-\gamma_\theta(N+1)}
 <
 e^{\gamma_\theta}Q_u(\kappa)^{-1/p_\theta}.
\]
Combining this inequality with \eqref{eq:gap-stage-app} proves
\eqref{eq:external-gap}. Similarly,
\[
 e^{-\theta\gamma_\theta N}
 <
 e^{\theta\gamma_\theta}
 Q_u(\kappa)^{-\theta\gamma_\theta/a_\theta}.
\]
Finally,
\[
 \theta\gamma_\theta=\frac14,
 \qquad
 \frac{\theta\gamma_\theta}{a_\theta}
 =
 \frac{\theta}{4\theta-1}
 =
 \nu_\theta.
\]
Equation \eqref{eq:stat-stage-app} now gives
\eqref{eq:external-stat}.
\end{proof}

Lemma~\ref{lem:external-cap} establishes Proposition~\ref{prop:restart} in the
form required by the portfolio analysis, including the $e^{1/4}$ cap-conversion
factor and the ceiling-safe denominator $1+A_\kappa$.

\subsection{Conservative gap bounds}
\label{app:restart:misspecification}

The restart construction remains valid under any deterministic upper bound on
the initial envelope gap. This property supports both dyadic gap
discretization and the reduced portfolio with a shared conservative gap bound.

Fix $(\theta,c)$ and let
\[
 \Delta_0\leq\Delta\leq\widetilde\Delta\leq1,
 \qquad
 \Gamma=\frac{\widetilde\Delta}{\Delta}\geq1.
\]
Then
\begin{equation}
 \frac{A_{(\theta,c,\widetilde\Delta)}}
      {A_{(\theta,c,\Delta)}}
 =
 \Gamma^{-p_\theta}
 \leq1.
 \label{eq:gap-scale-A-ratio}
\end{equation}
Thus increasing the valid gap upper bound weakly decreases the first-stage
scale. For every common cap $u$,
\[
 Q_u(\theta,c,\widetilde\Delta)
 =
 1+
 \frac{b_\theta u}
 {1+A_{(\theta,c,\widetilde\Delta)}}
 \geq
 Q_u(\theta,c,\Delta).
\]
It follows that affordability transfers from $\Delta$ to
$\widetilde\Delta$, and
\begin{align}
 \frac{\text{gap bound at }\widetilde\Delta}
      {\text{gap bound at }\Delta}
 &\leq
 \Gamma,
 \\
 \frac{\cR_u(\theta,c,\widetilde\Delta)}
      {\cR_u(\theta,c,\Delta)}
 &\leq
 \Gamma^\theta.
 \label{eq:gap-misspec-ratio}
\end{align}
In particular, a dyadic upper neighbor
$\Delta\leq\widetilde\Delta<2\Delta$ incurs at most a factor $2$ in the
envelope-gap bound and at most $2^\theta\leq2$ in RMS stationarity.

For a shared conservative bound $\Delta_+$, define
\[
 \Gamma_\Delta=\frac{\Delta_+}{\Delta}.
\]
Then
\begin{equation}
 \cR_u(\theta,c,\Delta_+)
 \leq
 \Gamma_\Delta^\theta
 \cR_u(\theta,c,\Delta).
 \label{eq:shared-gap-rate}
\end{equation}
Appendix~\ref{app:grid} combines this scale relation with one-sided rounding
of $\theta$ and $c$. The resulting reduced construction removes the gap
coordinate from the portfolio and quantifies its complete comparison through
the factor $\Gamma_\Delta^{\bar\theta}$.

\subsection{Boundary cases and scope}
\label{app:restart:corners}

\paragraph{Endpoint $\theta=1/2$.}
Here
\[
 p_\theta=1,\qquad
 \gamma_\theta=\frac12,\qquad
 a_\theta=\frac12,\qquad
 \nu_\theta=\frac12.
\]
All quantities are well defined, and \eqref{eq:external-stat} gives the
$u^{-1/2}$ stationarity exponent.

\paragraph{Endpoint $\theta=1$.}
Here
\[
 p_\theta=3,\qquad
 \gamma_\theta=\frac14,\qquad
 a_\theta=\frac34,\qquad
 \nu_\theta=\frac13.
\]
In particular, $b_\theta=e^{3/4}-1>0$, and
\eqref{eq:external-stat} gives the $u^{-1/3}$ endpoint exponent directly.

\paragraph{First-stage affordability.}
The cap guarantee applies once
\[
 \lceil A_\kappa\rceil\leq u.
\]
If this condition fails, the cap contains no complete restart stage and the
implementation returns $x_0$. The master theorem therefore compares against
the descriptions whose first stage fits the allocated training cap.

\paragraph{Unbounded feasible sets.}
The argument never invokes $\operatorname{diam}(X)$. It requires a closed
convex feasible set, the uniform second-moment oracle bound, and the
common-region KL condition along the comparator trajectory.

\paragraph{Description validity.}
The restart schedule is executable for any declared parameter triple. Its
theoretical guarantee is attached to valid descriptions, while
Appendix~\ref{app:grid} establishes the corresponding recursion-valid
interface for their one-sided grid neighbors.

\paragraph{Allocated and consumed calls.}
The cap $u$ is the quantity charged to the global portfolio budget. A
candidate may consume fewer calls when the remaining allocation cannot fit
the next complete stage. The external-cap bound already incorporates this
integer-stage effect, so no redistribution of unused calls is required by the
theory.

\subsection{Interface to the portfolio theorem}
\label{app:restart:contract}

For every valid description $\kappa$ whose first stage is affordable under an
integer cap $u$, define
\[
 x^{(\kappa,u)}
 :=
 x_{N(u,\kappa)}.
\]
The analysis above establishes
\begin{equation}
 \bigl(\E S(x^{(\kappa,u)})^2\bigr)^{1/2}
 \leq
 \cR_u(\kappa),
 \label{eq:restart-export-rms}
\end{equation}
where
\begin{equation}
 \cR_u(\kappa)
 =
 e^{1/4}c\Delta^\theta
 \left[
 1+
 \frac{(e^{1-1/(4\theta)}-1)u}
 {1+16e\,c^{-4}\rho L^2\Delta^{-(4\theta-1)}}
 \right]^{-\theta/(4\theta-1)}.
 \label{eq:restart-export-rate}
\end{equation}
Consequently,
\begin{equation}
 \E S(x^{(\kappa,u)})
 \leq
 \bigl(\E S(x^{(\kappa,u)})^2\bigr)^{1/2}
 \leq
 \cR_u(\kappa).
 \label{eq:restart-export-mean}
\end{equation}

Equations~\eqref{eq:restart-export-rms}--\eqref{eq:restart-export-mean} are
the complete known-description interface used by
Appendices~\ref{app:grid} and~\ref{app:master}. They provide an explicit
finite-cap stationarity guarantee from the primitive stochastic-subgradient
oracle, require no objective-value or $f^\star$ access during execution, and
remain valid on unbounded feasible sets under the stated oracle and
common-region assumptions.
% END appendices/app_b_restart

% BEGIN appendices/app_c_grid
\section{Joint KL Discretization}
\label{app:grid}

This appendix proves Lemma~\ref{lem:grid} and establishes the discretization
interface used by the master theorem. The construction rounds the KL exponent
upward, the KL constant downward, and the initial-gap upper bound upward. With
the horizon-dependent exponent mesh defined below, every valid continuous
description in the declared ranges admits a grid neighbor whose restart
trajectory inherits the known-description cap guarantee up to a uniform
constant factor. The resulting portfolio has $O((\log T)^3)$ candidates.

The proof separates geometric validity from schedule approximation. The
original KL description remains the pointwise geometric certificate on the
common region, while the one-sided grid neighbor specifies the executable
restart schedule. The one-sided parameter inequalities ensure that the
stationarity target produced by the rounded schedule is strong enough to close
the expected-gap induction through the original stochastic KL inequality.
No realized-gap bound is required.

Fix a horizon $T\geq1$ and declared ranges
\begin{equation}
 \theta\in[1/2,1],\qquad
 c\in[c_{\min,T},c_{\max,T}],\qquad
 \Delta\in[\Delta_{\min,T},1],
 \label{eq:grid-ranges}
\end{equation}
where
\begin{equation}
 \frac{c_{\max,T}}{c_{\min,T}}\leq T^{r_c},
 \qquad
 \Delta_{\min,T}=T^{-r_\Delta},
 \qquad
 r_c,r_\Delta\geq0.
 \label{eq:grid-polynomial-ranges}
\end{equation}
The constants $r_c,r_\Delta$ are declared and independent of the unknown KL
description.

\subsection{Expectation-level one-sided nesting}
\label{app:grid:nesting}

Let
\[
 Z(x)=F(x)-f^\star\geq0.
\]
Assumption~\ref{ass:klc} supplies the original pointwise KL inequality on the
common region. The normalization enters through the deterministic expectation
targets used by the restart recursion rather than through a pointwise bound on
realized envelope gaps.

\begin{lemma}[One-sided stochastic KL nesting]
\label{lem:grid-nesting}
Suppose $(\theta,c)$ satisfies the pointwise KL inequality on $\mathcal V$.
Let $X$ be a random point satisfying $X\in\mathcal V$ almost surely and
\[
 \E Z(X)\leq1.
\]
For any $\bar\theta\geq\theta$ and
$0<\underline c\leq c$,
\begin{equation}
 \bigl(\E S(X)^2\bigr)^{1/2}
 \geq
 \underline c\bigl(\E Z(X)\bigr)^{\bar\theta}.
 \label{eq:grid-stochastic-nesting}
\end{equation}
\end{lemma}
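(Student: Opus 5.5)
The plan is to reduce the claim to Lemma~\ref{lem:stochastic-kl} and then apply two elementary monotonicity facts. Lemma~\ref{lem:stochastic-kl} needs only the pointwise KL inequality for $(\theta,c)$ on $\mathcal V$ and the almost-sure membership $X\in\mathcal V$, both of which are hypotheses here. It gives
\[
 \bigl(\E S(X)^2\bigr)^{1/2}\ge c\bigl(\E Z(X)\bigr)^{\theta}.
\]
Since $Z\ge0$, the scalar $z:=\E Z(X)$ lies in $[0,1]$ by the normalization assumption.

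The exponent is then changed using only $z\in[0,1]$. For $z\in(0,1]$ the map $\vartheta\mapsto z^{\vartheta}=e^{\vartheta\log z}$ is nonincreasing, because $\log z\le0$. Hence $\bar\theta\ge\theta$ gives $z^{\theta}\ge z^{\bar\theta}$. For $z=0$ the right side of \eqref{eq:grid-stochastic-nesting} is $\underline c\cdot 0^{\bar\theta}=0$, using $\bar\theta\ge\theta\ge1/2>0$, so the inequality holds trivially because the left side is nonnegative. The constant is handled the same way: $0<\underline c\le c$ and $z^{\bar\theta}\ge0$ give $c z^{\bar\theta}\ge\underline c z^{\bar\theta}$. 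Chaining the three inequalities yields \eqref{eq:grid-stochastic-nesting}.

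No step is a real obstacle. The one point needing care is that the exponent comparison must be made after Jensen, at the level of the expectation $z$. Only the expectation is known to be at most $1$; realized gaps $Z(X)$ may exceed $1$, where pointwise upward rounding of $\theta$ would fail. This is why the argument passes through Lemma~\ref{lem:stochastic-kl} first instead of rounding the pointwise inequality. It is also why the hypothesis is stated as $\E Z(X)\le1$, which is the form later supplied by the deterministic targets $D_k\le\Delta\le1$.
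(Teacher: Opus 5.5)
Your proposal is correct and follows the paper's proof: apply Lemma~\ref{lem:stochastic-kl} to the original pair $(\theta,c)$, then use $z^{\bar\theta}\le z^\theta$ for $z=\E Z(X)\in[0,1]$ together with $\underline c\le c$. Your explicit handling of the case $z=0$, and your observation that the exponent can only be rounded after Jensen (at the level of the expectation), are correct refinements of the paper's shorter argument.
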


\begin{proof}
Lemma~\ref{lem:stochastic-kl}, applied to the original pointwise description,
gives
\[
 \bigl(\E S(X)^2\bigr)^{1/2}
 \geq
 c\bigl(\E Z(X)\bigr)^\theta.
\]
For $z\in[0,1]$ and $\bar\theta\geq\theta$,
$z^{\bar\theta}\leq z^\theta$. Together with
$\underline c\leq c$, this proves
\eqref{eq:grid-stochastic-nesting}.
\end{proof}

\begin{remark}[Pointwise geometry and stochastic nesting]
\label{rem:grid-common-region}
The original description $(\theta,c)$ supplies the pointwise KL inequality
along the neighboring trajectory in the common region $\mathcal V$.
Lemma~\ref{lem:grid-nesting} records the corresponding one-sided stochastic
nesting property whenever the expected gap is at most one. The restart
transfer below uses the stronger direct interface: the rounded schedule first
produces a stationarity target at the new stage output, and the original
pointwise description then converts that target into the next expected-gap
bound. Thus, no pointwise bound on realized envelope gaps is required.
\end{remark}

Let
\[
 \kappa=(\theta,c,\Delta),
 \qquad
 \bar\kappa=(\bar\theta,\underline c,\bar\Delta).
\]
We call $\bar\kappa$ \emph{recursion-valid relative to $\kappa$} when
\[
 \bar\theta\geq\theta,\qquad
 \underline c\leq c,\qquad
 \bar\Delta\geq\Delta,
\]
the original pointwise KL description $\kappa$ holds on a common region
containing the trajectory generated by $\bar\kappa$, and
$\bar\Delta\leq1$. Since $\Delta\geq\Delta_0$, the inequality
$\bar\Delta\geq\Delta$ also gives the induction base
$\Delta_0\leq\bar\Delta$.

\subsection{Explicit one-sided grids}
\label{app:grid:construction}

Set
\begin{equation}
 \eta_T=\frac{1}{\lceil\log(eT)\rceil}.
 \label{eq:grid-mesh}
\end{equation}
Define
\begin{align}
 \Theta_T
 &=
 \left\{
 \frac12+j\eta_T:
 0\leq j<
 \left\lceil\frac{1}{2\eta_T}\right\rceil
 \right\}
 \cup\{1\},
 \label{eq:theta-grid}\\
 \mathcal C_T
 &=
 \{c_{\min,T}2^j:0\leq j\leq J_c\},
 \qquad
 J_c=
 \left\lfloor
 \log_2\frac{c_{\max,T}}{c_{\min,T}}
 \right\rfloor,
 \label{eq:c-grid}\\
 \mathcal D_T
 &=
 \{\Delta_{\min,T}2^j:0\leq j\leq J_\Delta\}
 \cup\{1\},
 \qquad
 J_\Delta=
 \left\lfloor
 \log_2\frac1{\Delta_{\min,T}}
 \right\rfloor.
 \label{eq:delta-grid}
\end{align}
Duplicate endpoint $1$ in \eqref{eq:delta-grid} is removed. The endpoint
$c_{\max,T}$ need not be inserted: every $c$ in the declared interval has a
dyadic lower neighbor in $\mathcal C_T$.

For a continuous description
$\kappa=(\theta,c,\Delta)$ in \eqref{eq:grid-ranges}, define
\begin{equation}
 \bar\theta
 =
 \min\{v\in\Theta_T:v\geq\theta\},
 \qquad
 \underline c
 =
 \max\{v\in\mathcal C_T:v\leq c\},
 \qquad
 \bar\Delta
 =
 \min\{v\in\mathcal D_T:v\geq\Delta\}.
 \label{eq:grid-neighbor-definition}
\end{equation}
The endpoint conventions guarantee that all three neighbors are well defined.

\begin{lemma}[Neighbor directions and cardinality]
\label{lem:grid-cardinality}
The neighbor in \eqref{eq:grid-neighbor-definition} satisfies
\begin{equation}
 0\leq\bar\theta-\theta\leq\eta_T,
 \qquad
 \frac c2<\underline c\leq c,
 \qquad
 \Delta\leq\bar\Delta<2\Delta.
 \label{eq:grid-neighbor-directions}
\end{equation}
Moreover, for
\[
 \mathcal G_T
 =
 \Theta_T\times\mathcal C_T\times\mathcal D_T,
 \qquad
 K_T=|\mathcal G_T|,
\]
we have
\begin{equation}
 K_T=O((\log T)^3),
 \label{eq:grid-cardinality}
\end{equation}
where the hidden constant depends only on $(r_c,r_\Delta)$.
\end{lemma}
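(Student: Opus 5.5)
The proof splits into the three coordinatewise neighbor bounds in \eqref{eq:grid-neighbor-directions}, followed by a count of the three grid sizes. In each coordinate the plan is the same: first show the defining set in \eqref{eq:grid-neighbor-definition} is nonempty, which the endpoint conventions guarantee, and then bound the spacing between consecutive grid points.

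\emph{Exponent coordinate.} The points of $\Theta_T$ are $1/2+j\eta_T$ for $0\le j<J:=\lceil 1/(2\eta_T)\rceil$, together with the endpoint $1$. The largest regular point is $1/2+(J-1)\eta_T<1$, since $(J-1)\eta_T<1/2$ by the definition of the ceiling. For any $\theta\in[1/2,1]$, $\bar\theta$ is therefore either the smallest mesh point $\ge\theta$ or the endpoint $1$.
\begin{itemize}
\item In the first case, consecutive regular points differ by exactly $\eta_T$, so $\bar\theta-\theta\le\eta_T$.
\item In the second case $\theta>1/2+(J-1)\eta_T$. Using $J\eta_T\ge1/2$, this gives $1-\theta<1-1/2-(J-1)\eta_T\le\eta_T$.
\end{itemize}
Thus $0\le\bar\theta-\theta\le\eta_T$ in both cases.

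\emph{Constant coordinate.} Given $c\in[c_{\min,T},c_{\max,T}]$, choose $j=\lfloor\log_2(c/c_{\min,T})\rfloor$. This satisfies $0\le j\le J_c$ because $c/c_{\min,T}\le c_{\max,T}/c_{\min,T}$. Then $c_{\min,T}2^j\le c<c_{\min,T}2^{j+1}$. Since $\underline c$ is the largest grid point $\le c$, we get $c/2<c_{\min,T}2^j\le\underline c\le c$.

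\emph{Gap coordinate.} Given $\Delta\in[\Delta_{\min,T},1]$, take the smallest $j$ with $\Delta_{\min,T}2^j\ge\Delta$.
\begin{itemize}
\item If $j\le J_\Delta$, then either $j=0$ and $\Delta=\Delta_{\min,T}$, or $\Delta_{\min,T}2^{j-1}<\Delta$. In both cases $\bar\Delta<2\Delta$.
\item If no such $j\le J_\Delta$ exists, then $\bar\Delta=1$ and $\Delta>\Delta_{\min,T}2^{J_\Delta}$. Because $J_\Delta=\lfloor\log_2(1/\Delta_{\min,T})\rfloor$, we have $\Delta_{\min,T}2^{J_\Delta+1}>1$, so $\Delta>1/2$. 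Hence $1<2\Delta$.
\end{itemize}
This is the only step that needs care: we must confirm the top dyadic point lies within a factor $2$ of $1$, which the floor in $J_\Delta$ ensures.

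\emph{Cardinality.} Each grid is counted separately.
\begin{itemize}
\item $|\Theta_T|\le\lceil 1/(2\eta_T)\rceil+1\le\lceil\log(eT)\rceil/2+2=O(\log T)$.
\item $|\mathcal C_T|=J_c+1\le r_c\log_2T+1$.
\item $|\mathcal D_T|\le J_\Delta+2\le r_\Delta\log_2T+2$.
\end{itemize}
The product is at most $(\tfrac12\log(eT)+2)(r_c\log_2T+1)(r_\Delta\log_2T+2)$. This is $O((\log T)^3)$, with a constant depending only on $(r_c,r_\Delta)$; the bound holds for $T\ge2$, and small $T$ is absorbed into the constant.
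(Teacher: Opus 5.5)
Your proof is correct and follows the same route as the paper's: you bound the spacing of consecutive grid points for each one-sided neighbor, handling the inserted endpoint $1$ in $\Theta_T$ and $\mathcal D_T$, and then multiply the three grid sizes, verifying explicitly the endpoint cases that the paper only asserts. One trivial slip is that $\lceil\log(eT)\rceil/2+2$ is bounded by $\tfrac12\log(eT)+\tfrac52$ rather than $\tfrac12\log(eT)+2$, which does not affect the $O((\log T)^3)$ conclusion.
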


\begin{proof}
Successive nonterminal points of $\Theta_T$ differ by $\eta_T$, and insertion
of the endpoint $1$ can only shorten the final interval. This proves the
exponent inequalities. Consecutive dyadic points in $\mathcal C_T$ and
$\mathcal D_T$ have ratio two, giving the remaining inequalities, including
the capped endpoint of $\mathcal D_T$.

For cardinality,
\begin{align*}
 |\Theta_T|
 &\leq
 \left\lceil\frac{1}{2\eta_T}\right\rceil+1,\\
 |\mathcal C_T|
 &\leq
 1+\log_2(c_{\max,T}/c_{\min,T}),\\
 |\mathcal D_T|
 &\leq
 2+\log_2(1/\Delta_{\min,T}).
\end{align*}
Using \eqref{eq:grid-polynomial-ranges} and
$\eta_T^{-1}=\lceil\log(eT)\rceil$ proves
\eqref{eq:grid-cardinality}, including the degenerate cases
$r_c=0$ and $r_\Delta=0$.
\end{proof}

The one-sided inequalities in
Lemma~\ref{lem:grid-cardinality}, together with the original pointwise KL
description on the common region containing the neighboring trajectory,
make $\bar\kappa=(\bar\theta,\underline c,\bar\Delta)$ recursion-valid
relative to $\kappa$. Lemma~\ref{lem:grid-restart-transfer} below shows that
these conditions are sufficient for the rounded restart schedule to inherit
the capped guarantee.

\subsection{Stage-constant comparison}
\label{app:grid:stageconstant}

Recall from Appendix~\ref{app:restart} that
\[
 p_\theta=4\theta-1,
 \qquad
 A_\kappa
 =
 16e\,c^{-4}\rho L^2\Delta^{-p_\theta}.
\]
For the neighbor
$\bar\kappa=(\bar\theta,\underline c,\bar\Delta)$, write
\[
 u_c=\frac{c}{\underline c}\in[1,2),
 \qquad
 v_\Delta=\frac{\bar\Delta}{\Delta}\in[1,2),
 \qquad
 d=\bar\theta-\theta\in[0,\eta_T].
\]
Direct cancellation gives
\begin{equation}
 \frac{A_{\bar\kappa}}{A_\kappa}
 =
 u_c^4
 v_\Delta^{-(4\bar\theta-1)}
 \Delta^{-4d}.
 \label{eq:A-ratio-exact}
\end{equation}

\begin{lemma}[Uniform stage-constant comparison]
\label{lem:grid-A}
Let
\begin{equation}
 C_A=16e^{4r_\Delta}.
 \label{eq:CA-definition}
\end{equation}
Then
\begin{equation}
 A_{\bar\kappa}\leq C_AA_\kappa,
 \qquad
 1+A_{\bar\kappa}
 \leq
 C_A(1+A_\kappa).
 \label{eq:grid-A-comparison}
\end{equation}
\end{lemma}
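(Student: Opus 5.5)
The plan is to bound the three factors in the exact ratio \eqref{eq:A-ratio-exact} one at a time, and then pass from $A$ to $1+A$.

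\textbf{First factor.} By Lemma~\ref{lem:grid-cardinality}, $u_c=c/\underline c\in[1,2)$. Hence $u_c^4<16$.

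\textbf{Second factor.} We have $v_\Delta\ge1$ and $4\bar\theta-1\ge1>0$. Therefore $v_\Delta^{-(4\bar\theta-1)}\le1$. This is where the upward rounding of $\Delta$ helps: it only decreases the first-stage scale, consistent with \eqref{eq:gap-scale-A-ratio}.

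\textbf{Third factor, the exponent mismatch $\Delta^{-4d}$.} This is the only step that uses the horizon-dependent mesh. Since $\Delta\le1$ and $d\ge0$, we have $\Delta^{-4d}=\exp(4d\log(1/\Delta))$.

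\begin{itemize}
\item The declared range gives $\Delta\ge\Delta_{\min,T}=T^{-r_\Delta}$, so $\log(1/\Delta)\le r_\Delta\log T$.
\item The mesh gives $d\le\eta_T=1/\lceil\log(eT)\rceil\le 1/(1+\log T)$.
\end{itemize}

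Combining the two,
\[
 4d\log(1/\Delta)\le \frac{4r_\Delta\log T}{1+\log T}\le 4r_\Delta .
\]
This gives $\Delta^{-4d}\le e^{4r_\Delta}$. The one point to check is that the unknown $\Delta$ enters only through its declared lower endpoint, so the bound is uniform over valid descriptions. The degenerate case $r_\Delta=0$ forces $\Delta=1$, and then the factor equals $1$.

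Multiplying the three bounds yields $A_{\bar\kappa}\le 16e^{4r_\Delta}A_\kappa=C_AA_\kappa$.

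\textbf{From $A$ to $1+A$.} Since $C_A\ge16\ge1$, we get $1+A_{\bar\kappa}\le 1+C_AA_\kappa\le C_A(1+A_\kappa)$. This completes \eqref{eq:grid-A-comparison}.

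I expect no real obstacle. The only delicate point is the third factor, where one must use $\eta_T\log T\le1$ rather than a constant mesh; a constant mesh would leave a polynomial factor $T^{4r_\Delta\eta}$.
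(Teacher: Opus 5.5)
Your proposal is correct and follows the paper's own proof essentially step for step. Like the paper, you bound the three factors of the exact ratio by $u_c^4<16$, $v_\Delta^{-(4\bar\theta-1)}\le1$, and $\Delta^{-4d}\le\exp\bigl(4r_\Delta\log T/\lceil\log(eT)\rceil\bigr)\le e^{4r_\Delta}$, and then pass from $A$ to $1+A$ using $C_A\ge1$.
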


\begin{proof}
Because $v_\Delta\geq1$ and $4\bar\theta-1>0$, the
$v_\Delta$ factor in \eqref{eq:A-ratio-exact} is at most one. Also
$u_c^4<16$. Finally,
$\Delta\geq T^{-r_\Delta}$ and
$d\leq1/\lceil\log(eT)\rceil$ imply
% \begin{align*}
%  \Delta^{-4d}
%  &\leq
%  \exp\!\left(
%  \frac{4r_\Delta\log T}
%  {\lceil\log(eT)\rceil}
%  \right)\\
%  &\leq e^{4r_\Delta}.
% \end{align*}
\[
  \Delta^{-4d} \le \exp\!\left( \frac{4r_\Delta\log T}{\lceil\log(eT)\rceil} \right) \le e^{4r_\Delta}.
\]
Hence $A_{\bar\kappa}\leq C_AA_\kappa$. Since $C_A\geq1$,
\[
 1+A_{\bar\kappa}
 \leq
 1+C_AA_\kappa
 \leq
 C_A(1+A_\kappa),
\]
which proves the second inequality.
\end{proof}

The shrinking exponent mesh in \eqref{eq:grid-mesh} keeps the stage-constant
distortion uniform in $T$. In contrast, a fixed positive exponent mesh would
allow $\Delta^{-4d}$ to grow polynomially when the declared gap floor
decreases polynomially with the horizon.

\subsection{Restart guarantee for a recursion-valid neighbor}
\label{app:grid:restart-transfer}

The original KL description closes the rounded restart induction. The
one-sided parameter inequalities ensure that the stationarity target generated
by the rounded schedule is strong enough to recover the rounded expected-gap
target through the original stochastic KL inequality. The following lemma
formalizes this transfer under an arbitrary external cap.

\begin{lemma}[Restart bound under recursion validity]
\label{lem:grid-restart-transfer}
Let $\kappa=(\theta,c,\Delta)$ be a valid continuous description and let
$\bar\kappa=(\bar\theta,\underline c,\bar\Delta)$ be recursion-valid relative
to $\kappa$. Run the restart schedule of Appendix~\ref{app:restart} with
parameters $\bar\kappa$. If its first stage is affordable under an integer cap
$u$, then its completed-stage output $x_u(\bar\kappa)$ satisfies
\begin{equation}
 \bigl(\E S(x_u(\bar\kappa))^2\bigr)^{1/2}
 \leq
 \mathcal R_u(\bar\kappa),
 \label{eq:grid-recursion-restart}
\end{equation}
where
\begin{equation}
 \mathcal R_u(\bar\kappa)
 =
 e^{1/4}\underline c\,\bar\Delta^{\bar\theta}
 \left[
 1+
 \frac{b_{\bar\theta}u}
 {1+A_{\bar\kappa}}
 \right]^{-\nu_{\bar\theta}}.
 \label{eq:grid-recursion-rate}
\end{equation}
\end{lemma}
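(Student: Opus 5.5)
The plan is to rerun the proof of Lemma~\ref{lem:restart-induction} and Lemma~\ref{lem:external-cap} with the schedule built from $\bar\kappa$, but to close the induction through the \emph{original} pointwise description $\kappa$. Define the rounded targets $\bar D_k=\bar\Delta e^{-\gamma_{\bar\theta}k}$, steps $\bar\beta_k=\sqrt{\bar D_k}/(\sqrt\rho L)$ and lengths $\bar t_k=\lceil A_{\bar\kappa}e^{a_{\bar\theta}k}\rceil$; these are exactly what Algorithm~\ref{alg:carp} executes for the grid point $\bar\kappa$. I will prove by induction that $\E[F(x_k)-f^\star]\le\bar D_k$ for all $k$. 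The base case is $\Delta_0\le\Delta\le\bar\Delta=\bar D_0$, which holds by recursion validity.

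For the inductive step, assume $\E[F(x_k)-f^\star]\le\bar D_k$. Lemma~\ref{lem:deterministic-target} with $D_k=\bar D_k$ uses only the oracle \eqref{eq:oracle} and the deterministic choice $\bar\beta_k$, not KL. Inserting $\bar t_k+1\ge A_{\bar\kappa}e^{a_{\bar\theta}k}$ and repeating the algebra of Lemma~\ref{lem:restart-induction} verbatim with $(\bar\theta,\underline c,\bar\Delta)$ gives the stationarity target
\[
\bigl(\E S(x_{k+1})^2\bigr)^{1/2}\le \underline c\,\bar\Delta^{\bar\theta}e^{-\bar\theta\gamma_{\bar\theta}(k+1)}=\underline c\,\bar D_{k+1}^{\bar\theta}.
\]
To get the next gap bound, use the original description. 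Since $x_{k+1}\in\mathcal V$ almost surely by the common-region assumption, Lemma~\ref{lem:stochastic-kl} for $(\theta,c)$ gives $c\,(\E Z(x_{k+1}))^\theta\le \underline c\,\bar D_{k+1}^{\bar\theta}$. Hence
\[
\E Z(x_{k+1})\le (\underline c/c)^{1/\theta}\,\bar D_{k+1}^{\bar\theta/\theta}\le \bar D_{k+1},
\]
where the last inequality uses $\underline c\le c$, $\bar D_{k+1}\le\bar\Delta\le1$ and $\bar\theta/\theta\ge1$. This is the step I expect to carry the real content, and it is exactly where the normalization $\bar\Delta\le1$ and the one-sided directions are consumed, mirroring Lemma~\ref{lem:grid-nesting}. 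The point to check carefully is that the exponent comparison is applied to the deterministic number $\bar D_{k+1}\in(0,1]$. It is not applied to the realized gap, so no pointwise bound on realized gaps is needed.

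Once the induction holds, the stationarity bound $(\E S(x_k)^2)^{1/2}\le\underline c\,\bar\Delta^{\bar\theta}e^{-k/4}$ holds for $k\ge1$, because $\bar\theta\gamma_{\bar\theta}=1/4$. The cap conversion is then identical to Lemma~\ref{lem:external-cap}. First-stage affordability $\lceil A_{\bar\kappa}\rceil\le u$ gives $N\ge1$. Maximality gives $u<T_{N+1}(\bar\kappa)$, and the integer sandwich \eqref{eq:integer-stage-sandwich} for $\bar\kappa$ turns this into $e^{a_{\bar\theta}(N+1)}>Q_u(\bar\kappa)$. Therefore $e^{-N/4}<e^{1/4}Q_u(\bar\kappa)^{-\nu_{\bar\theta}}$, which yields \eqref{eq:grid-recursion-rate}. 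Nothing in these last steps depends on $\kappa$ beyond the induction, so the conversion itself is routine.
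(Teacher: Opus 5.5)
Your proposal is correct and follows the paper's proof essentially step for step. It uses the same rounded targets $\bar D_k$, the stage bound from Lemma~\ref{lem:deterministic-target} with the $A_{\bar\kappa}$ cancellation, closure of the gap induction through the original $(\theta,c)$ via Lemma~\ref{lem:stochastic-kl} applied to the deterministic number $\bar D_{k+1}\in(0,1]$, and then the unchanged external-cap conversion.
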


\begin{proof}
Set
\[
 \bar\gamma=\frac{1}{4\bar\theta},
 \qquad
 \bar D_k=\bar\Delta e^{-\bar\gamma k}.
\]
We prove
\[
 \E Z(x_k)\leq\bar D_k
\]
by induction over completed stages. The base case follows from
\[
 Z(x_0)=\Delta_0\leq\Delta\leq\bar\Delta=\bar D_0.
\]

Suppose that
\[
 \E Z(x_k)\leq\bar D_k.
\]
Applying Lemma~\ref{lem:deterministic-target} with the rounded schedule
parameters gives
\[
 \E S(x_{k+1})^2
 \leq
 \frac{4\sqrt{\rho}L\sqrt{\bar D_k}}
 {\sqrt{\bar t_k+1}},
\]
where
\[
 \bar t_k
 =
 \left\lceil
 A_{\bar\kappa}
 e^{(4\bar\theta-1)\bar\gamma k}
 \right\rceil.
\]
Using
\[
 A_{\bar\kappa}
 =
 16e\,\underline c^{-4}\rho L^2
 \bar\Delta^{-(4\bar\theta-1)}
\]
and the same cancellation as in
Lemma~\ref{lem:restart-induction} yields
\begin{equation}
 \E S(x_{k+1})^2
 \leq
 \underline c^{\,2}
 \bar\Delta^{2\bar\theta}
 e^{-2\bar\theta\bar\gamma(k+1)}
 =
 \underline c^{\,2}
 \bar D_{k+1}^{\,2\bar\theta}.
 \label{eq:grid-rounded-stage-stationarity}
\end{equation}

The original description $\kappa$ holds pointwise on the common region
containing the neighboring trajectory. Hence
Lemma~\ref{lem:stochastic-kl}, applied with the original parameters
$(\theta,c)$ at $x_{k+1}$, gives
\[
 \E Z(x_{k+1})
 \leq
 c^{-1/\theta}
 \bigl(\E S(x_{k+1})^2\bigr)^{1/(2\theta)}.
\]
Combining this inequality with
\eqref{eq:grid-rounded-stage-stationarity},
\[
 \E Z(x_{k+1})
 \leq
 \left(\frac{\underline c}{c}\right)^{1/\theta}
 \bar D_{k+1}^{\bar\theta/\theta}.
\]
Since
\[
 \underline c\leq c,
 \qquad
 \bar\theta\geq\theta,
 \qquad
 0<\bar D_{k+1}\leq\bar\Delta\leq1,
\]
we obtain
\[
 \E Z(x_{k+1})
 \leq
 \bar D_{k+1}.
\]
This closes the rounded expected-gap induction.

Equation~\eqref{eq:grid-rounded-stage-stationarity} also gives, for every
completed stage $k\geq1$,
\[
 \bigl(\E S(x_k)^2\bigr)^{1/2}
 \leq
 \underline c\,\bar\Delta^{\bar\theta}
 e^{-\bar\theta\bar\gamma k}.
\]
The completed-cost and external-cap arguments of
Appendix~\ref{app:restart} are algebraic in the stage schedule and therefore
apply with the rounded parameters. They yield
\eqref{eq:grid-recursion-restart}--\eqref{eq:grid-recursion-rate}.
\end{proof}

\begin{remark}[Description validity and recursion validity]
\label{rem:grid-no-pointwise-transfer}
The master theorem compares against valid continuous KL descriptions, while
the grid serves as an executable approximation device. The original
description supplies the pointwise geometric inequality on the common region;
the grid neighbor supplies the executable restart schedule. The proof of
Lemma~\ref{lem:grid-restart-transfer} shows directly that the rounded
stationarity target is sufficient to close the expected-gap recursion through
the original KL description. Thus pointwise KL validity of the rounded triple
itself is not required.
\end{remark}

\subsection{Constant-factor rate comparison}
\label{app:grid:comparison}

For a continuous description $\kappa$, define
\begin{equation}
 b_\theta=e^{1-1/(4\theta)}-1,
 \qquad
 \nu_\theta=\frac{\theta}{4\theta-1},
 \qquad
 Q_u(\kappa)
 =
 1+\frac{b_\theta u}{1+A_\kappa}.
 \label{eq:grid-b-nu-Q}
\end{equation}
The known-description cap bound is
\begin{equation}
 \mathcal R_u(\kappa)
 =
 e^{1/4}c\Delta^\theta
 Q_u(\kappa)^{-\nu_\theta}.
 \label{eq:grid-rate-definition}
\end{equation}

On $[1/2,1]$, $b_\theta$ is increasing and $\nu_\theta$ is decreasing.
Indeed,
\begin{equation}
 \nu_\theta'
 =
 -\frac{1}{(4\theta-1)^2},
 \qquad
 |\nu_\theta'|\leq1.
 \label{eq:nu-derivative}
\end{equation}
Thus
\begin{equation}
 0\leq
 \nu_\theta-\nu_{\bar\theta}
 \leq
 \bar\theta-\theta
 \leq
 \eta_T.
 \label{eq:nu-mismatch}
\end{equation}

\begin{lemma}[Constant-factor joint approximation]
\label{lem:grid-comparison}
For every integer cap $1\leq u\leq T$,
\begin{equation}
 \mathcal R_u(\bar\kappa)
 \leq
 C_{\rm grid}\mathcal R_u(\kappa),
 \qquad
 C_{\rm grid}
 =
 2e^2C_A^{1/2}
 =
 8e^{2+2r_\Delta}.
 \label{eq:grid-rate-comparison}
\end{equation}
\end{lemma}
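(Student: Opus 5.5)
The plan is to write $\mathcal R_u(\bar\kappa)/\mathcal R_u(\kappa)$ as a product of four factors and bound each one separately. By \eqref{eq:grid-recursion-rate} and \eqref{eq:grid-rate-definition},
\[
 \frac{\mathcal R_u(\bar\kappa)}{\mathcal R_u(\kappa)}
 =
 \frac{\underline c}{c}\cdot
 \frac{\bar\Delta^{\bar\theta}}{\Delta^{\theta}}\cdot
 \left(\frac{Q_u(\bar\kappa)}{Q_u(\kappa)}\right)^{-\nu_{\bar\theta}}\cdot
 Q_u(\kappa)^{\nu_\theta-\nu_{\bar\theta}} .
\]
The factors are, in order: the constant factor, the gap-scale factor, the stage-scale factor, and the exponent-mismatch factor. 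The target is to show they are at most $1$, $2$, $C_A^{1/2}$ and $e^2$ respectively, which multiply to $2e^2C_A^{1/2}=8e^{2+2r_\Delta}$.

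\textbf{Constant and gap-scale factors.} The first factor is at most $1$ by Lemma~\ref{lem:grid-cardinality}. For the second, I use $\bar\Delta\le1$ and $\bar\theta\ge\theta$ to get $\bar\Delta^{\bar\theta}\le\bar\Delta^{\theta}$. Then $\bar\Delta<2\Delta$ gives $\bar\Delta^{\theta}\le 2^\theta\Delta^\theta\le 2\Delta^\theta$.

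\textbf{Stage-scale factor.} Since $b_\theta$ is increasing on $[1/2,1]$, we have $b_{\bar\theta}\ge b_\theta$. Lemma~\ref{lem:grid-A} gives $1+A_{\bar\kappa}\le C_A(1+A_\kappa)$. Because $C_A\ge1$, these combine to
\[
 Q_u(\bar\kappa)\ \ge\ 1+\frac{b_\theta u}{C_A(1+A_\kappa)}\ \ge\ \frac{Q_u(\kappa)}{C_A}.
\]
Since $\nu_{\bar\theta}\le1/2$, the third factor is therefore at most $C_A^{\nu_{\bar\theta}}\le C_A^{1/2}$.

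\textbf{Exponent-mismatch factor (main step).} This is the only place the horizon-dependent mesh matters. By \eqref{eq:nu-mismatch}, $0\le\nu_\theta-\nu_{\bar\theta}\le\eta_T$, and $Q_u(\kappa)\ge1$, so the factor is at most $Q_u(\kappa)^{\eta_T}$. For the crude upper bound on $Q_u(\kappa)$, I use $u\le T$, $A_\kappa\ge0$ and $b_\theta\le e^{3/4}-1<e^2-1$. This gives
\[
 Q_u(\kappa)\le 1+(e^{3/4}-1)T\le e^2T,
 \qquad\text{so}\qquad
 \log Q_u(\kappa)\le 2+\log T .
\]
Since $\eta_T=1/\lceil\log(eT)\rceil\le 1/(1+\log T)$,
\[
 \eta_T\log Q_u(\kappa)\le\frac{2+\log T}{1+\log T}\le 2 .
\]
Hence $Q_u(\kappa)^{\eta_T}\le e^2$, uniformly in $T$ and $u$. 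A fixed mesh would instead cost a polynomial factor $T^{\eta}$, so the delicate point is confirming that the $\log T$ in $\log Q_u$ is exactly cancelled by the mesh $\eta_T\asymp1/\log(eT)$. The cap restriction $u\le T$ is what makes this work. Multiplying the four bounds gives \eqref{eq:grid-rate-comparison}.
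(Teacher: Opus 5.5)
Your proof is correct and follows the paper's argument essentially step for step: the same four-factor split of $\mathcal R_u(\bar\kappa)/\mathcal R_u(\kappa)$, the same bounds $1$, $2$, and $C_A^{1/2}$ for the first three factors (via Lemma~\ref{lem:grid-A} and monotonicity of $b_\theta$), and the same mesh cancellation for $Q_u(\kappa)^{\nu_\theta-\nu_{\bar\theta}}$. The only difference is cosmetic: the paper uses $Q_u(\kappa)\le e^{3/4}T$ to get the sharper factor $e$ and then relaxes $2eC_A^{1/2}$ to $2e^2C_A^{1/2}$, whereas your cruder bound $Q_u(\kappa)\le e^2T$ lands on $e^2$ directly.
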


\begin{proof}
Since $b_{\bar\theta}\geq b_\theta$,
Lemma~\ref{lem:grid-A} gives
% \begin{align}
%  Q_u(\bar\kappa)
%  &=
%  1+
%  \frac{b_{\bar\theta}u}
%  {1+A_{\bar\kappa}}
%  \nonumber\\
%  &\geq
%  1+
%  \frac{b_\theta u}
%  {C_A(1+A_\kappa)}
%  \nonumber\\
%  &\geq
%  \frac{Q_u(\kappa)}{C_A}.
%  \label{eq:grid-Q-comparison}
% \end{align}
\begin{equation}
  Q_u(\bar\kappa) = 1 + \frac{b_{\bar\theta}u}{1+A_{\bar\kappa}} \ge 1 + \frac{b_\theta u}{C_A(1+A_\kappa)} \ge \frac{Q_u(\kappa)}{C_A}.
  \label{eq:grid-Q-comparison}
\end{equation}
Moreover, using \eqref{eq:grid-neighbor-directions} and $\Delta\leq1$,
% \begin{align}
%  \frac{\underline c\,\bar\Delta^{\bar\theta}}
%       {c\Delta^\theta}
%  &\leq
%  v_\Delta^{\bar\theta}
%  \Delta^{\bar\theta-\theta}
%  \nonumber\\
%  &\leq2.
%  \label{eq:grid-prefactor-comparison}
% \end{align}
\begin{equation}
  \frac{\underline{c}\,\bar\Delta^{\bar\theta}}{c\Delta^\theta}
  \le v_\Delta^{\bar\theta}\Delta^{\bar\theta-\theta}
  \le 2.
  \label{eq:grid-prefactor-comparison}
\end{equation}
Combining
\eqref{eq:grid-Q-comparison}--\eqref{eq:grid-prefactor-comparison},
\begin{equation}
 \frac{\mathcal R_u(\bar\kappa)}
      {\mathcal R_u(\kappa)}
 \leq
 2C_A^{\nu_{\bar\theta}}
 Q_u(\kappa)^{\nu_\theta-\nu_{\bar\theta}}.
 \label{eq:grid-rate-ratio}
\end{equation}
Because $\nu_{\bar\theta}\leq1/2$,
\[
 C_A^{\nu_{\bar\theta}}
 \leq
 C_A^{1/2}.
\]
Also $b_\theta\leq e^{3/4}-1$ and $1+A_\kappa\geq1$, so for $u\leq T$,
\[
 Q_u(\kappa)
 \leq
 1+(e^{3/4}-1)T
 \leq
 e^{3/4}T.
\]
By \eqref{eq:grid-mesh} and \eqref{eq:nu-mismatch},
% \begin{align*}
%  Q_u(\kappa)^{\nu_\theta-\nu_{\bar\theta}}
%  &\leq
%  \exp\!\left(
%  \frac{\log(e^{3/4}T)}
%  {\lceil\log(eT)\rceil}
%  \right)\\
%  &\leq e.
% \end{align*}
\[
  Q_u(\kappa)^{\nu_\theta-\nu_{\bar\theta}}
  \le \exp\!\left( \frac{\log(e^{3/4}T)}{\lceil\log(eT)\rceil} \right)
  \le e.
\]
Hence
\[
 \mathcal R_u(\bar\kappa)
 \leq
 2eC_A^{1/2}\mathcal R_u(\kappa)
 \leq
 2e^2C_A^{1/2}\mathcal R_u(\kappa).
\]
Finally,
\[
 C_A^{1/2}=4e^{2r_\Delta},
\]
and therefore
\[
 C_{\rm grid}
 =
 2e^2C_A^{1/2}
 =
 8e^{2+2r_\Delta}.
\]
\end{proof}

Lemmas~\ref{lem:grid-cardinality},
\ref{lem:grid-A},
\ref{lem:grid-restart-transfer}, and
\ref{lem:grid-comparison} establish the recursion-validity and
constant-factor approximation components of Lemma~\ref{lem:grid}.
Lemma~\ref{lem:grid-nesting} records the associated expectation-level
one-sided KL relation.

\subsection{Affordability transfer}
\label{app:grid:affordability}

For a per-candidate integer training cap $n$, define
\begin{equation}
 \mathfrak K_T^{\rm aff}(f;n)
 =
 \left\{
 \kappa:
 \begin{array}{l}
 \kappa\text{ satisfies Assumption~\ref{ass:klc} and the ranges} 
 \text{ in \eqref{eq:grid-ranges}, and }
 C_A(1+A_\kappa)\leq n
 \end{array}
 \right\}.
 \label{eq:grid-affordable-class}
\end{equation}

\begin{lemma}[First-stage affordability transfers to the neighbor]
\label{lem:grid-affordability}
If
$\kappa\in\mathfrak K_T^{\rm aff}(f;n)$,
then its one-sided neighbor obeys
\begin{equation}
 \left\lceil A_{\bar\kappa}\right\rceil
 \leq n.
 \label{eq:grid-neighbor-affordable}
\end{equation}
Consequently, Lemma~\ref{lem:grid-restart-transfer} applies to the grid
candidate trained under cap $n$.
\end{lemma}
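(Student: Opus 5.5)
The argument is a short chain of inequalities, essentially a corollary of Lemma~\ref{lem:grid-A}. First fix $\kappa\in\mathfrak K_T^{\rm aff}(f;n)$ and let $\bar\kappa=(\bar\theta,\underline c,\bar\Delta)$ be its one-sided neighbor from \eqref{eq:grid-neighbor-definition}. Since $\kappa$ lies in the declared ranges \eqref{eq:grid-ranges}, the neighbor directions \eqref{eq:grid-neighbor-directions} hold, and so do the hypotheses of Lemma~\ref{lem:grid-A}. Applying its second inequality in \eqref{eq:grid-A-comparison} gives
\[
  1+A_{\bar\kappa}\le C_A(1+A_\kappa)\le n ,
\]
where the last step is the defining affordability constraint in \eqref{eq:grid-affordable-class}.

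Next, convert this into a statement about the ceiling. Since $n$ is an integer and $A_{\bar\kappa}>0$, we have $\lceil A_{\bar\kappa}\rceil\le A_{\bar\kappa}+1\le n$, which is \eqref{eq:grid-neighbor-affordable}. Note that $\lceil A_{\bar\kappa}\rceil=\bar t_0$ is exactly the length of the first rounded restart stage in \eqref{eq:restart-schedule-app}, evaluated with the parameters $\bar\kappa$. Hence the first-stage affordability hypothesis \eqref{eq:first-stage-affordable} holds for the rounded schedule under cap $u=n$.

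For the consequence, check the remaining hypotheses of Lemma~\ref{lem:grid-restart-transfer}. The neighbor $\bar\kappa$ is recursion-valid relative to $\kappa$ for three reasons. The one-sided inequalities come from Lemma~\ref{lem:grid-cardinality}. The bound $\bar\Delta\le1$ holds because $\mathcal D_T$ is capped at $1$. The common-region requirement is part of validity of $\kappa$ under Assumption~\ref{ass:klc}, which covers the neighboring-grid trajectory. The lemma then applies with $u=n$, and this $u$ is an integer.

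The only point requiring care is the ceiling step. It needs the strengthened form $1+A_{\bar\kappa}\le C_A(1+A_\kappa)$ rather than only $A_{\bar\kappa}\le C_AA_\kappa$, because the latter does not control $\lceil A_{\bar\kappa}\rceil$ when $A_{\bar\kappa}$ is small. This is why the affordability class is defined through $C_A(1+A_\kappa)$. Beyond this, I expect no real obstacle.
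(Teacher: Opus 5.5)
Your proof is correct and is essentially the paper's argument: the chain $\lceil A_{\bar\kappa}\rceil\le 1+A_{\bar\kappa}\le C_A(1+A_\kappa)\le n$ via Lemma~\ref{lem:grid-A}, followed by recursion validity from the one-sided neighbor conditions, the cap $\bar\Delta\le1$, and common-region validity of $\kappa$. One minor point: your closing remark overstates the need for the ``$+1$'' form. Since $n$ is an integer, $A_{\bar\kappa}\le C_AA_\kappa\le C_A(1+A_\kappa)\le n$ already gives $\lceil A_{\bar\kappa}\rceil\le n$.
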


\begin{proof}
Lemma~\ref{lem:grid-A} and $C_A\geq1$ give
\[
 \lceil A_{\bar\kappa}\rceil
 \leq
 1+A_{\bar\kappa}
 \leq
 C_A(1+A_\kappa)
 \leq n.
\]
The one-sided neighbor conditions and the common-region validity of the
original description make $\bar\kappa$ recursion-valid relative to $\kappa$.
Lemma~\ref{lem:grid-restart-transfer} therefore applies.
\end{proof}

Combining Lemmas~\ref{lem:grid-affordability},
\ref{lem:grid-restart-transfer}, and \ref{lem:grid-comparison} gives, for
every $\kappa\in\mathfrak K_T^{\rm aff}(f;n)$,
\begin{equation}
 \bigl(\E S(x_n(\bar\kappa))^2\bigr)^{1/2}
 \leq
 \mathcal R_n(\bar\kappa)
 \leq
 C_{\rm grid}\mathcal R_n(\kappa).
 \label{eq:grid-affordable-export}
\end{equation}

\subsection{Boundary cases and scope}
\label{app:grid:audit}

\paragraph{Grid endpoints.}
At $\theta=1$, $c=c_{\min,T}$, or $\Delta=1$, the corresponding neighbor may
coincide with the continuous description, and all comparison inequalities
remain valid. The dyadic bounds
\[
 \frac c2<\underline c,
 \qquad
 \bar\Delta<2\Delta
\]
also remain valid under the endpoint conventions.

\paragraph{Declared parameter ranges.}
The comparison class consists of descriptions satisfying
\[
 \Delta\in[\Delta_{\min,T},1],
 \qquad
 c\in[c_{\min,T},c_{\max,T}].
\]
The polynomially expanding ranges make the grid logarithmic in the horizon
while allowing increasingly broad geometric descriptions as $T$ grows.

\paragraph{Normalization.}
The upward exponent direction uses deterministic expected-gap targets in
$[0,1]$, which is ensured by the normalized upper-gap coordinate
$\Delta\leq1$. The construction places no corresponding upper bound on each
realized envelope gap. In the restart transfer, the rounded stationarity
target is converted into the next expected-gap target using the original
pointwise KL description.

\paragraph{Nonunique descriptions.}
The construction applies independently to every valid continuous
description. The master theorem can therefore optimize over the entire
affordable valid class rather than selecting or estimating a distinguished
``true'' KL exponent.

\paragraph{Executable grid.}
All grid schedules are executable from their declared parameters. The oracle
comparison requires only that each continuous comparator entering the
infimum admit the recursion-valid neighbor constructed above. Thus validity
is a comparator property rather than an input required to run the portfolio.

\paragraph{Dependence on $r_\Delta$.}
The comparison constant
\[
 C_{\rm grid}=8e^{2+2r_\Delta}
\]
is uniform in the horizon. Its dependence on $r_\Delta$ quantifies the cost
of allowing the declared gap floor to decrease as $T^{-r_\Delta}$.

\paragraph{Small horizons.}
The grid is defined for every $T\geq1$. The oracle comparison becomes
effective as soon as the neighboring first stage of a valid description fits
the allocated training cap, exactly as encoded by
$\mathfrak K_T^{\rm aff}(f;n)$.

\subsection{Interface to the master theorem}
\label{app:grid:contract}

For every
$\kappa\in\mathfrak K_T^{\rm aff}(f;n)$,
this appendix constructs a grid neighbor
$\bar\kappa\in\mathcal G_T$ satisfying
\begin{equation}
 \bigl(\E S(x_n(\bar\kappa))^2\bigr)^{1/2}
 \leq
 \mathcal R_n(\bar\kappa)
 \leq
 C_{\rm grid}\mathcal R_n(\kappa),
 \label{eq:grid-export}
\end{equation}
and hence, by Cauchy--Schwarz,
\begin{equation}
 \E S(x_n(\bar\kappa))
 \leq
 C_{\rm grid}\mathcal R_n(\kappa).
 \label{eq:grid-export-mean}
\end{equation}
The structural quantities are
\[
 K_T=O((\log T)^3),
 \qquad
 C_A=16e^{4r_\Delta},
 \qquad
 C_{\rm grid}=8e^{2+2r_\Delta}.
\]
Thus, the master theorem receives a logarithmic-size executable portfolio and,
for every affordable valid continuous comparator, a grid candidate whose
finite-horizon stationarity guarantee differs by only a uniform multiplicative
constant.

\subsection{Reduced grid with a common conservative gap bound}
\label{app:grid:reduced}

A common conservative upper bound on the initial envelope gap allows the gap
coordinate to be shared across all schedules. Suppose
\[
 0<\Delta_+\leq1,
 \qquad
 \Delta_0\leq\Delta_+,
\]
and define
\[
 \mathcal G_T^{\rm red}
 =
 \Theta_T\times\mathcal C_T,
 \qquad
 K_T^{\rm red}
 =
 |\mathcal G_T^{\rm red}|.
\]
For a valid comparator
\[
 \kappa=(\theta,c,\Delta),
 \qquad
 \Delta\leq\Delta_+,
\]
choose
\[
 \widetilde\kappa
 =
 (\bar\theta,\underline c,\Delta_+),
\]
where $\bar\theta$ and $\underline c$ are the one-sided neighbors defined
above. Write
\[
 d=\bar\theta-\theta,
 \qquad
 u_c=\frac{c}{\underline c},
 \qquad
 \Gamma_\Delta=\frac{\Delta_+}{\Delta}.
\]
Direct cancellation gives
\begin{equation}
 \frac{A_{\widetilde\kappa}}{A_\kappa}
 =
 u_c^4
 \Gamma_\Delta^{-(4\bar\theta-1)}
 \Delta^{-4d}.
 \label{eq:reduced-A-ratio}
\end{equation}
Using
$u_c<2$, $\Gamma_\Delta\geq1$,
$\Delta\geq T^{-r_\Delta}$, and
$d\leq\eta_T$ yields
\begin{equation}
 A_{\widetilde\kappa}
 \leq
 C_A^{\rm red}A_\kappa,
 \qquad
 C_A^{\rm red}
 =
 16e^{4r_\Delta}.
 \label{eq:reduced-A}
\end{equation}
Therefore, the shared conservative gap bound preserves the same uniform
first-stage affordability comparison as the full construction.

Define
\begin{equation}
 \mathfrak K_{T,\rm red}^{\rm aff}(f;n)
 =
 \left\{
 \kappa:
 \begin{array}{l}
 \kappa\text{ satisfies Assumption~\ref{ass:klc} and the declared ranges},\\[-1mm]
 \Delta\leq\Delta_+,\quad
 C_A^{\rm red}(1+A_\kappa)\leq n
 \end{array}
 \right\}.
 \label{eq:reduced-affordable-class}
\end{equation}
For every comparator in this class,
\[
 \lceil A_{\widetilde\kappa}\rceil
 \leq
 1+A_{\widetilde\kappa}
 \leq
 C_A^{\rm red}(1+A_\kappa)
 \leq n.
\]
The one-sided parameter conditions and the common-region validity of the
original description make $\widetilde\kappa$ recursion-valid relative to
$\kappa$. Lemma~\ref{lem:grid-restart-transfer} therefore supplies its capped
restart guarantee.

For the rate comparison, monotonicity of $b_\theta$, the exponent-mesh bound
from Lemma~\ref{lem:grid-comparison}, and
% \begin{align}
%  \frac{\underline c\Delta_+^{\bar\theta}}
%       {c\Delta^\theta}
%  &\leq
%  \Gamma_\Delta^{\bar\theta}
%  \Delta^{\bar\theta-\theta}
%  \nonumber\\
%  &\leq
%  \Gamma_\Delta^{\bar\theta}
%  \label{eq:reduced-prefactor}
% \end{align}
\begin{equation}
  \frac{\underline{c}\Delta_+^{\bar\theta}}{c\Delta^\theta}
  \le \Gamma_\Delta^{\bar\theta}\Delta^{\bar\theta-\theta}
  \le \Gamma_\Delta^{\bar\theta}.
  \label{eq:reduced-prefactor}
\end{equation}
give, for every $1\leq u\leq T$,
\begin{equation}
 \mathcal R_u(\widetilde\kappa)
 \leq
 C_{\rm red}
 \Gamma_\Delta^{\bar\theta}
 \mathcal R_u(\kappa),
 \qquad
 C_{\rm red}
 =
 e^2(C_A^{\rm red})^{1/2}
 =
 4e^{2+2r_\Delta}.
 \label{eq:reduced-rate}
\end{equation}
The factor two appearing in $C_{\rm grid}$ is absent because the reduced
construction does not perform dyadic upper rounding of the gap coordinate.

Consequently, every
$\kappa\in\mathfrak K_{T,\rm red}^{\rm aff}(f;n)$
has a reduced-grid candidate satisfying
\begin{equation}
 \bigl(\E S(x_n(\widetilde\kappa))^2\bigr)^{1/2}
 \leq
 C_{\rm red}
 \Gamma_\Delta^{\bar\theta}
 \mathcal R_n(\kappa).
 \label{eq:reduced-export}
\end{equation}
Moreover,
\begin{equation}
 K_T^{\rm red}
 =
 O((\log T)^2).
 \label{eq:reduced-cardinality}
\end{equation}
The same portfolio construction therefore allocates a larger per-candidate
training and certification budget than the full three-dimensional grid. Under
the nondegenerate-budget condition used by the master theorem,
\[
 n\geq\frac{T}{4K_T^{\rm red}},
\]
which yields Corollary~\ref{cor:reduced}.

The reduced construction exposes a direct tradeoff between portfolio size and
gap-scale mismatch. Sharing $\Delta_+$ removes one logarithmic grid dimension,
while the oracle comparison acquires the explicit factor
\[
 \Gamma_\Delta^{\bar\theta}
 =
 \left(\frac{\Delta_+}{\Delta}\right)^{\bar\theta}.
\]
When $\Gamma_\Delta=O(1)$, the polynomial rate is unchanged and the
portfolio-sharing overhead improves from
$O((\log T)^{3\nu_\theta})$ to
$O((\log T)^{2\nu_\theta})$. As a result, a common conservative gap bound can be
shared across schedules at a precisely quantified cost. The explicit mismatch
factor also characterizes the regime in which retaining the full gap grid
remains preferable.
% END appendices/app_c_grid

% BEGIN appendices/app_d_certificate
\section{Primitive Same-Oracle Certificate}
\label{app:certificate}

This appendix proves Proposition~\ref{prop:certificate} and establishes the
certification interface used by CARP-KL. For any trained candidate $x$, the
construction approximates its proximal point using only projection onto $X$
and fresh calls to the primitive stochastic-subgradient oracle. The resulting
scalar statistic estimates
\[
  S(x)=\|\nabla F(x)\|
\]
with an explicit finite-sample confidence radius. Repeating independent
blocks and taking their scalar median yields simultaneous certification of the
entire trained portfolio under the same primitive-oracle model used for
optimization.

Fix a candidate $x\in X$ and define
\begin{equation}
 \Psi_x(y)
 =
 g(y)+\rho\|y-x\|^2+\iota_X(y),
 \qquad
 y_x^\star
 =
 \arg\min_{y\in X}\Psi_x(y).
 \label{eq:prox-subproblem}
\end{equation}
Since $g$ is $\rho$-weakly convex, $\Psi_x$ is $\rho$-strongly convex and
$y_x^\star$ is the unique proximal point at parameter $1/(2\rho)$. Therefore,
\begin{equation}
 S(x)
 =
 \|\nabla F(x)\|
 =
 2\rho\|x-y_x^\star\|.
 \label{eq:moreau-prox-identity}
\end{equation}

For an integer block length $q\geq1$, initialize $y_0=x$ and run
\begin{equation}
 y_{t+1}
 =
 \Pi_X\!\left[
 y_t-\alpha_t
 \bigl(G(y_t,\xi_t)+2\rho(y_t-x)\bigr)
 \right],
 \qquad
 \alpha_t=\frac{2}{\rho(t+32)},
 \label{eq:certificate-update}
\end{equation}
for $t=0,\ldots,q-1$. The block returns the last iterate and the scalar
\begin{equation}
 Z_q(x)=2\rho\|x-y_q\|.
 \label{eq:block-statistic}
\end{equation}
Thus, certification reduces to approximating the strongly convex proximal
subproblem and reading off the distance from its last iterate to the
candidate.

\subsection{Exact proximal localization}
\label{app:cert:localization}

The proximal geometry itself supplies the localization needed by the
certificate, even when $X$ is unbounded.

\begin{lemma}[Proximal self-localization]
\label{lem:prox-localization}
For every $x\in X$,
\begin{equation}
 \|y_x^\star-x\|
 \leq
 \frac{2L}{\rho},
 \qquad
 S(x)\leq4L.
 \label{eq:prox-localization}
\end{equation}
\end{lemma}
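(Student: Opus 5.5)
The plan is to compare the proximal point $y_x^\star$ with the feasible competitor $y=x$ in the strongly convex subproblem \eqref{eq:prox-subproblem}, and to bound the change in $g$ using a subgradient at $x$. Since $x\in X$, optimality of $y_x^\star$ gives
\[
 g(y_x^\star)+\rho\|y_x^\star-x\|^2\le g(x).
\]
To control $g(x)-g(y_x^\star)$, I would use the oracle: by \eqref{eq:oracle}, $v:=\E_\xi G(x,\xi)\in\partial g(x)$. By Jensen, $\|v\|\le(\E\|G(x,\xi)\|^2)^{1/2}\le L$. Weak convexity then gives the subgradient inequality
\[
 g(y)\ge g(x)+\langle v,y-x\rangle-\tfrac{\rho}{2}\|y-x\|^2
\]
for all $y\in X$.

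Applying this at $y=y_x^\star$ and combining with the optimality inequality, I would obtain
\[
 \rho\|y_x^\star-x\|^2\le -\langle v,y_x^\star-x\rangle+\tfrac{\rho}{2}\|y_x^\star-x\|^2\le L\|y_x^\star-x\|+\tfrac{\rho}{2}\|y_x^\star-x\|^2 .
\]
Hence $\tfrac{\rho}{2}\|y_x^\star-x\|^2\le L\|y_x^\star-x\|$, which yields $\|y_x^\star-x\|\le 2L/\rho$. The identity \eqref{eq:moreau-prox-identity} then gives $S(x)=2\rho\|x-y_x^\star\|\le 4L$.

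The main point to check is the form of the weak-convexity subgradient inequality for the subdifferential in which the oracle mean lies. For a $\rho$-weakly convex lsc $g$, the function $g+\tfrac{\rho}{2}\|\cdot\|^2$ is convex, and its convex subdifferential equals $\partial g+\rho(\cdot)$. So $v\in\partial g(x)$ means $v+\rho x$ is a convex subgradient of $g+\tfrac{\rho}{2}\|\cdot\|^2$ at $x$. Expanding that convex inequality gives exactly the displayed bound with the $-\tfrac{\rho}{2}\|y-x\|^2$ term, and no boundedness of $X$ enters anywhere. I would also note that $g(x)$ is finite because the oracle condition \eqref{eq:oracle} holds on $X$, which makes $\partial g(x)$ nonempty there.
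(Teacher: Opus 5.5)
Your proposal is correct and follows the paper's proof essentially line for line. You compare $y_x^\star$ with the feasible point $x$, lower-bound $g(y_x^\star)$ through the weak-convexity inequality at $x$ using the oracle mean in $\partial g(x)$ (whose norm is at most $L$ by Jensen), obtain $\tfrac{\rho}{2}\|y_x^\star-x\|^2\le L\|y_x^\star-x\|$, and conclude $S(x)=2\rho\|x-y_x^\star\|\le 4L$. Your justification of the subgradient inequality via convexity of $g+\tfrac{\rho}{2}\|\cdot\|^2$ is a step the paper uses without comment.
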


\begin{proof}
Optimality of $y_x^\star$ relative to the feasible comparison point $x$ gives
\begin{equation}
 g(y_x^\star)+\rho\|y_x^\star-x\|^2
 \leq
 g(x).
 \label{eq:prox-compare-x}
\end{equation}
Let
\[
 s=y_x^\star-x,
 \qquad
 \bar G(x)=\E_\xi G(x,\xi)\in\partial g(x).
\]
Weak convexity yields
\begin{equation}
 g(y_x^\star)
 \geq
 g(x)+\langle\bar G(x),s\rangle
 -\frac{\rho}{2}\|s\|^2.
 \label{eq:weak-convex-at-x}
\end{equation}
Combining
\eqref{eq:prox-compare-x}--\eqref{eq:weak-convex-at-x},
\[
 \frac{\rho}{2}\|s\|^2
 \leq
 -\langle\bar G(x),s\rangle
 \leq
 \|\bar G(x)\|\|s\|
 \leq
 L\|s\|,
\]
where the final inequality follows from Jensen's inequality and
\eqref{eq:oracle}. Division by $\|s\|$ when $s\neq0$ gives
\[
 \|y_x^\star-x\|
 \leq
 \frac{2L}{\rho};
\]
the case $s=0$ is immediate. Equation~\eqref{eq:moreau-prox-identity} then
gives
\[
 S(x)\leq4L.
\]
\end{proof}

The same localization bound serves two roles below: it controls the auxiliary
oracle moment during certification and gives a deterministic stationarity
bound on the complement of the simultaneous certification event.

\subsection{Strong-convexity drift}
\label{app:cert:drift}

Write
\[
 \bar G(y)=\E_\xi G(y,\xi),
 \qquad
 h_x(y)=\bar G(y)+2\rho(y-x).
\]
The proximal objective generates a uniform drift toward its exact minimizer.

\begin{lemma}[Exact drift]
\label{lem:certificate-drift}
For every $y\in X$,
\begin{equation}
 \langle h_x(y),y-y_x^\star\rangle
 \geq
 \rho\|y-y_x^\star\|^2.
 \label{eq:certificate-drift}
\end{equation}
\end{lemma}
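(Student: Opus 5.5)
The plan is to prove the drift inequality \eqref{eq:certificate-drift} by combining two monotonicity facts: one from weak convexity of $g$, one from the quadratic proximal term, both evaluated against the optimality condition at $y_x^\star$.

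First, I will extract a first-order optimality condition for the strongly convex subproblem \eqref{eq:prox-subproblem}. Since $\Psi_x$ is $\rho$-strongly convex and minimized at $y_x^\star$, for every $y\in X$ we have $\Psi_x(y)\ge\Psi_x(y_x^\star)+\frac{\rho}{2}\|y-y_x^\star\|^2$. This uses only the function-value form of strong convexity, so no subdifferential calculus for $g+\iota_X$ is needed.

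Second, I will lower-bound $\Psi_x(y_x^\star)$ through a linearization at $y$. Set $\bar G(y)\in\partial g(y)$ and apply weak convexity at $y$ in the direction $y_x^\star-y$:
\[
 g(y_x^\star)\ge g(y)+\langle\bar G(y),y_x^\star-y\rangle-\frac{\rho}{2}\|y_x^\star-y\|^2 .
\]
The quadratic term $\phi(z)=\rho\|z-x\|^2$ is exactly $2\rho$-strongly convex, so
\[
 \phi(y_x^\star)=\phi(y)+\langle2\rho(y-x),y_x^\star-y\rangle+\rho\|y_x^\star-y\|^2 .
\]
Adding these two relations gives $\Psi_x(y_x^\star)\ge\Psi_x(y)-\langle h_x(y),y-y_x^\star\rangle+\frac{\rho}{2}\|y-y_x^\star\|^2$ for $y\in X$.

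Third, I will chain the two estimates. Combining the first and second steps yields
\[
 \Psi_x(y)\ge\Psi_x(y)-\langle h_x(y),y-y_x^\star\rangle+\rho\|y-y_x^\star\|^2 ,
\]
and cancelling $\Psi_x(y)$ gives \eqref{eq:certificate-drift}.

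The main obstacle is the finiteness of $g(y)$ and $g(y_x^\star)$, which the cancellation requires. Since $\partial g(y)$ is nonempty on $X$ by \eqref{eq:oracle}, $g$ is finite throughout $X$, so the cancellation is legitimate. I also need to confirm that the weak-convexity subgradient inequality holds for the subdifferential used in \eqref{eq:oracle}. That inequality is the standard characterization of $\rho$-weak convexity, namely convexity of $g+\frac{\rho}{2}\|\cdot\|^2$, and it is the same inequality already used in Lemma~\ref{lem:prox-localization}.
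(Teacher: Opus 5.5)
Your proof is correct and follows essentially the same route as the paper: weak convexity at $y$ combined with the exact expansion of $\rho\|\cdot-x\|^2$ lower-bounds $\Psi_x(y_x^\star)$, and the quadratic-growth form of $\rho$-strong convexity at the constrained minimizer closes the argument once the finite value $\Psi_x(y)$ is cancelled. The only difference is order of presentation: you state the growth inequality first and then chain, whereas the paper rearranges first and then substitutes it.
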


\begin{proof}
Weak convexity at $y$ gives
\[
 g(y_x^\star)
 \geq
 g(y)
 +
 \langle\bar G(y),y_x^\star-y\rangle
 -
 \frac{\rho}{2}\|y-y_x^\star\|^2.
\]
The quadratic identity
% \begin{align*}
%  \rho\|y_x^\star-x\|^2
%  &=
%  \rho\|y-x\|^2
%  +2\rho\langle y-x,y_x^\star-y\rangle\\
%  &\quad
%  +\rho\|y-y_x^\star\|^2
% \end{align*}
\[
  \rho\|y_x^\star-x\|^2 = \rho\|y-x\|^2 + 2\rho\langle y-x, y_x^\star-y\rangle + \rho\|y-y_x^\star\|^2
\]
therefore implies
\begin{equation}
 \Psi_x(y_x^\star)
 \geq
 \Psi_x(y)
 +
 \langle h_x(y),y_x^\star-y\rangle
 +
 \frac{\rho}{2}\|y-y_x^\star\|^2.
 \label{eq:weak-subgradient-psi}
\end{equation}
Rearranging,
\begin{equation}
 \langle h_x(y),y-y_x^\star\rangle
 \geq
 \Psi_x(y)-\Psi_x(y_x^\star)
 +
 \frac{\rho}{2}\|y-y_x^\star\|^2.
 \label{eq:drift-two-parts}
\end{equation}
Since $\Psi_x$ is $\rho$-strongly convex on the closed convex set $X$ and
$y_x^\star$ is its constrained minimizer,
\[
 \Psi_x(y)-\Psi_x(y_x^\star)
 \geq
 \frac{\rho}{2}\|y-y_x^\star\|^2.
\]
Substitution into \eqref{eq:drift-two-parts} proves
\eqref{eq:certificate-drift}.
\end{proof}

\subsection{Self-bounding oracle moment}
\label{app:cert:selfbound}

Let
\[
 d_t=y_t-y_x^\star,
 \qquad
 D_t=\|d_t\|^2.
\]
All conditional expectations below are with respect to the fresh sample
$\xi_t$, conditional on the candidate and the block history through $y_t$.
The oracle moment grows at most affinely with the squared distance to the
proximal point.

\begin{lemma}[State-dependent second moment]
\label{lem:certificate-moment}
For every $t\geq0$,
\begin{equation}
 \E_t
 \|G(y_t,\xi_t)+2\rho(y_t-x)\|^2
 \leq
 66L^2+16\rho^2D_t.
 \label{eq:certificate-moment}
\end{equation}
\end{lemma}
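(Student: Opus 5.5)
The bound follows from two applications of the elementary inequality $\|a+b\|^2\le 2\|a\|^2+2\|b\|^2$, together with the oracle second-moment bound \eqref{eq:oracle} and the proximal self-localization of Lemma~\ref{lem:prox-localization}. No drift or recursion argument is needed at this stage. The lemma is a one-step, conditional statement about the fresh sample $\xi_t$.

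\textbf{Step 1 (measurability).} I would first record that $y_t\in X$ for every $t$. This holds because $y_0=x\in X$ and each update \eqref{eq:certificate-update} ends with the projection $\Pi_X$. Moreover, $y_t$ is determined by the candidate and the block history through time $t-1$, while $\xi_t$ is fresh. Hence, conditional on that history, $y_t$ is a fixed point of $X$, and \eqref{eq:oracle}, which holds uniformly on $X$, gives $\E_t\|G(y_t,\xi_t)\|^2\le L^2$.

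\textbf{Step 2 (first split).} Separate the stochastic part from the deterministic quadratic part:
\[
 \E_t\|G(y_t,\xi_t)+2\rho(y_t-x)\|^2
 \le 2\E_t\|G(y_t,\xi_t)\|^2+8\rho^2\|y_t-x\|^2
 \le 2L^2+8\rho^2\|y_t-x\|^2 .
\]

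\textbf{Step 3 (localization through the proximal point).} The term $\|y_t-x\|$ is not directly controlled, since $X$ may be unbounded. The key step is to route it through $y_x^\star$ via the triangle inequality, $\|y_t-x\|\le\|y_t-y_x^\star\|+\|y_x^\star-x\|$. Squaring and using the same elementary inequality gives
\[
 \|y_t-x\|^2\le 2D_t+2\|y_x^\star-x\|^2 .
\]
Lemma~\ref{lem:prox-localization} then supplies $\|y_x^\star-x\|\le 2L/\rho$, so $\|y_t-x\|^2\le 2D_t+8L^2/\rho^2$.

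\textbf{Step 4 (combine).} Substituting the bound from Step~3 into Step~2 gives
\[
 \E_t\|G(y_t,\xi_t)+2\rho(y_t-x)\|^2\le 2L^2+16\rho^2D_t+64L^2=66L^2+16\rho^2D_t ,
\]
which is exactly the claimed bound. The constants match without slack, which suggests this is the intended route.

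The only point requiring care is Step~3. One must avoid any diameter bound and instead use the self-localization of the exact proximal point, so that the moment bound is affine in the distance $D_t$ to $y_x^\star$. This affine form is what the subsequent drift recursion (using Lemma~\ref{lem:certificate-drift}) can absorb with the step size $2/(\rho(t+32))$.
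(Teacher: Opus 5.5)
Your proposal is correct and follows the paper's proof essentially line for line: the split $2L^2+8\rho^2\|y_t-x\|^2$, then $\|y_t-x\|^2\le 2D_t+8L^2/\rho^2$ via Lemma~\ref{lem:prox-localization}, giving exactly $66L^2+16\rho^2D_t$. Your explicit remark that $y_t\in X$ (so that \eqref{eq:oracle} applies at $y_t$) is a small point the paper leaves implicit.
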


\begin{proof}
The inequality
$\|a+b\|^2\leq2\|a\|^2+2\|b\|^2$
and \eqref{eq:oracle} give
\begin{equation}
 \E_t
 \|G(y_t,\xi_t)+2\rho(y_t-x)\|^2
 \leq
 2L^2+8\rho^2\|y_t-x\|^2.
 \label{eq:moment-first}
\end{equation}
By Lemma~\ref{lem:prox-localization},
\[
 \|y_t-x\|^2
 \leq
 2\|y_t-y_x^\star\|^2
 +
 2\|y_x^\star-x\|^2
 \leq
 2D_t+\frac{8L^2}{\rho^2}.
\]
Substitution into \eqref{eq:moment-first} gives
\[
 2L^2+16\rho^2D_t+64L^2
 =
 66L^2+16\rho^2D_t,
\]
proving the claim.
\end{proof}

This affine self-bound supplies the complete localization mechanism needed on
an unbounded feasible set. The uniform oracle moment assumption controls the
stochastic term, while the state-dependent quadratic term is absorbed by the
strong-convexity drift.

\subsection{Full-constant last-iterate recursion}
\label{app:cert:recursion}

We now combine the drift and moment bounds to obtain an explicit mean-square
approximation rate for the last proximal iterate.

\begin{lemma}[Mean-square proximal approximation]
\label{lem:certificate-ms}
For every integer $q\geq0$, the iterates in
\eqref{eq:certificate-update} satisfy
\begin{equation}
 \E\|y_q-y_x^\star\|^2
 \leq
 \frac{264L^2}{\rho^2(q+32)}.
 \label{eq:certificate-ms}
\end{equation}
\end{lemma}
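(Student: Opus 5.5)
The plan is the standard last-iterate analysis of projected stochastic gradient descent on the $\rho$-strongly convex subproblem $\Psi_x$. We combine the exact drift of Lemma~\ref{lem:certificate-drift} with the self-bounding moment of Lemma~\ref{lem:certificate-moment}, and close by induction on the claimed rate $C/(t+32)$ with $C=264L^2/\rho^2$.

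\textbf{Step 1 (one-step expansion).} Since $y_x^\star\in X$ and $\Pi_X$ is nonexpansive,
\[
 D_{t+1}\le \bigl\|d_t-\alpha_t\bigl(G(y_t,\xi_t)+2\rho(y_t-x)\bigr)\bigr\|^2
 = D_t-2\alpha_t\langle G(y_t,\xi_t)+2\rho(y_t-x),d_t\rangle+\alpha_t^2\|G(y_t,\xi_t)+2\rho(y_t-x)\|^2 .
\]
Take $\E_t$ and use $\E_t G(y_t,\xi_t)=\bar G(y_t)$, so the cross term becomes $\langle h_x(y_t),d_t\rangle$. Lemma~\ref{lem:certificate-drift} bounds this below by $\rho D_t$. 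Lemma~\ref{lem:certificate-moment} bounds the last term by $66L^2+16\rho^2D_t$. Taking total expectations gives
\[
 \E D_{t+1}\le\bigl(1-2\rho\alpha_t+16\rho^2\alpha_t^2\bigr)\E D_t+66L^2\alpha_t^2 .
\]

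\textbf{Step 2 (plug in the step).} Write $s=t+32$, so $\alpha_t=2/(\rho s)$. Then $2\rho\alpha_t=4/s$ and $16\rho^2\alpha_t^2=64/s^2\le 2/s$, because $s\ge32$. The contraction factor is therefore at most $1-2/s$. The noise term equals $264L^2/(\rho^2 s^2)=C/s^2$. Hence
\[
 \E D_{t+1}\le (1-2/s)\,\E D_t+C/s^2 .
\]
This absorption of the state-dependent quadratic term into the drift is the step I expect to be the crux. It is exactly why the offset $32$ is chosen, and it is what removes any need for a bounded domain.

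\textbf{Step 3 (induction).} For the base case, $D_0=\|x-y_x^\star\|^2\le 4L^2/\rho^2$ by Lemma~\ref{lem:prox-localization}. This is at most $C/32=8.25\,L^2/\rho^2$. For the inductive step, if $\E D_t\le C/s$, then
\[
 \E D_{t+1}\le C\Bigl(\frac{1}{s}-\frac{2}{s^2}+\frac{1}{s^2}\Bigr)=\frac{C(s-1)}{s^2}\le\frac{C}{s+1},
\]
using $(s-1)(s+1)\le s^2$. This yields \eqref{eq:certificate-ms} for every $q\ge0$.

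The only point requiring care is measurability. The candidate $x$ and the history through $y_t$ are fixed under $\E_t$, and $\xi_t$ is fresh, so both the unbiasedness and the second-moment bound in \eqref{eq:oracle} apply conditionally. The resulting bounds hold conditionally on the trained candidate, as the certificate interface requires.
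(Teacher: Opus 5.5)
Your proposal is correct and matches the paper's proof step for step: the same expansion via nonexpansiveness of the projection, the same absorption $64/s^2\le 2/s$ (valid since $s\ge 32$) giving $\E D_{t+1}\le(1-2/s)\E D_t+C/s^2$, and the same induction with base case $D_0\le 4L^2/\rho^2\le C/32$, closed by $(s-1)(s+1)\le s^2$. The only difference is harmless: you take total expectations before substituting the step size, while the paper first simplifies the conditional recursion.
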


\begin{proof}
Nonexpansiveness of projection and feasibility of $y_x^\star$ give
% \begin{align*}
%  D_{t+1}
%  &\leq
%  D_t
%  -
%  2\alpha_t
%  \langle
%  G(y_t,\xi_t)+2\rho(y_t-x),
%  d_t
%  \rangle\\
%  &\quad
%  +
%  \alpha_t^2
%  \|G(y_t,\xi_t)+2\rho(y_t-x)\|^2.
% \end{align*}
\[
  D_{t+1} \le D_t - 2\alpha_t \langle G(y_t,\xi_t) + 2\rho(y_t-x), d_t \rangle + \alpha_t^2 \|G(y_t,\xi_t) + 2\rho(y_t-x)\|^2.
\]
Taking conditional expectation and applying
Lemmas~\ref{lem:certificate-drift}--\ref{lem:certificate-moment},
\begin{equation}
 \E_tD_{t+1}
 \leq
 \left(
 1-2\rho\alpha_t+16\rho^2\alpha_t^2
 \right)D_t
 +
 66L^2\alpha_t^2.
 \label{eq:certificate-raw-recursion}
\end{equation}
Set $z=t+32$. Since
\[
 \alpha_t=\frac{2}{\rho z},
\]
we have
\[
 16\rho^2\alpha_t^2
 =
 \frac{64}{z^2}
 \leq
 \frac{2}{z}
 =
 \rho\alpha_t.
\]
The offset $32$ therefore yields
\begin{equation}
 \E_tD_{t+1}
 \leq
 \left(1-\frac{2}{t+32}\right)D_t
 +
 \frac{264L^2}{\rho^2(t+32)^2}.
 \label{eq:certificate-clean-recursion}
\end{equation}

At initialization, Lemma~\ref{lem:prox-localization} gives
\[
 D_0
 =
 \|x-y_x^\star\|^2
 \leq
 \frac{4L^2}{\rho^2}
 \leq
 \frac{264L^2}{32\rho^2}.
\]
Suppose inductively that
\[
 \E D_t
 \leq
 \frac{264L^2}{\rho^2(t+32)}
\]
and again set $z=t+32$. Taking total expectation in
\eqref{eq:certificate-clean-recursion},
% \begin{align*}
%  \E D_{t+1}
%  &\leq
%  \frac{264L^2}{\rho^2}
%  \left[
%  \left(1-\frac{2}{z}\right)\frac1z
%  +
%  \frac1{z^2}
%  \right]\\
%  &=
%  \frac{264L^2}{\rho^2}
%  \frac{z-1}{z^2}\\
%  &\leq
%  \frac{264L^2}{\rho^2(z+1)},
% \end{align*}
\[
  \mathbb{E} D_{t+1} \le \frac{264L^2}{\rho^2} \left[ \left(1-\frac{2}{z}\right)\frac{1}{z} + \frac{1}{z^2} \right] = \frac{264L^2}{\rho^2} \frac{z-1}{z^2} \le \frac{264L^2}{\rho^2(z+1)}.
\]
because $(z-1)(z+1)\leq z^2$. Induction proves
\eqref{eq:certificate-ms}. The bound applies directly to the last iterate
$y_q$.
\end{proof}

\subsection{One-block scalar certificate}
\label{app:cert:block}

The proximal approximation immediately induces a scalar estimator of
stationarity.

\begin{lemma}[Constant-success block]
\label{lem:certificate-block}
For every fixed candidate $x\in X$ and integer $q\geq1$,
\begin{equation}
 \Prob\!\left(
 |Z_q(x)-S(x)|
 >
 \frac{8\sqrt{66}L}{\sqrt{q+32}}
 \right)
 \leq
 \frac14.
 \label{eq:certificate-block}
\end{equation}
\end{lemma}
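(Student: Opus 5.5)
The plan is to reduce the scalar deviation to the vector proximal error and then apply Markov's inequality to the mean-square bound of Lemma~\ref{lem:certificate-ms}.

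\emph{Step 1 (scalar reduction).} By the identity \eqref{eq:moreau-prox-identity} and the reverse triangle inequality,
\[
 |Z_q(x)-S(x)|
 =2\rho\bigl|\|x-y_q\|-\|x-y_x^\star\|\bigr|
 \le 2\rho\|y_q-y_x^\star\|.
\]
Only this scalar inequality is needed, so no dimension-dependent vector concentration enters.

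\emph{Step 2 (second moment).} Squaring and taking expectations gives
\[
 \E|Z_q(x)-S(x)|^2
 \le 4\rho^2\E\|y_q-y_x^\star\|^2
 \le \frac{1056L^2}{q+32}
 = \frac{16\cdot 66\,L^2}{q+32}.
\]
This uses Lemma~\ref{lem:certificate-ms}, which applies because the block samples are fresh and $x$ is fixed.

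\emph{Step 3 (Chebyshev/Markov).} Set
\[
 r=\frac{8\sqrt{66}L}{\sqrt{q+32}},
 \qquad
 r^2=\frac{64\cdot 66\,L^2}{q+32}.
\]
Markov's inequality applied to the squared deviation then gives
\[
 \Prob\bigl(|Z_q-S|>r\bigr)
 \le \frac{16\cdot 66\,L^2/(q+32)}{64\cdot 66\,L^2/(q+32)}
 = \frac14,
\]
which is exactly the constant $1/4$ claimed in the lemma.

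Nothing here is delicate. The only point requiring care is that Lemma~\ref{lem:certificate-ms} is stated for the last iterate with initialization $y_0=x$, and that $x$ is deterministic, or conditioned upon, so that the bound holds for the probability in question. The constant bookkeeping in Step 3, which matches $4\cdot 264$ against $64\cdot 66$, is the main thing to verify.
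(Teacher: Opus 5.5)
Your proposal is correct and follows the paper's proof step for step. The reverse triangle inequality reduces the scalar error to $2\rho\|y_q-y_x^\star\|$, Lemma~\ref{lem:certificate-ms} bounds its second moment by $1056L^2/(q+32)$, and Markov's inequality at the squared threshold $4224L^2/(q+32)=4\cdot 1056L^2/(q+32)$ gives the constant $1/4$.
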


\begin{proof}
The reverse triangle inequality and
\eqref{eq:moreau-prox-identity} imply pathwise
\[
 |Z_q(x)-S(x)|
 \leq
 2\rho\|y_q-y_x^\star\|.
\]
Lemma~\ref{lem:certificate-ms} therefore gives
\begin{equation}
 \E|Z_q(x)-S(x)|^2
 \leq
 \frac{1056L^2}{q+32}.
 \label{eq:block-second-moment}
\end{equation}
Applying Markov's inequality at squared threshold
\[
 \frac{4224L^2}{q+32}
 =
 4\frac{1056L^2}{q+32}
\]
proves \eqref{eq:certificate-block}.
\end{proof}

\subsection{Scalar-median amplification}
\label{app:cert:median}

For $\varepsilon\in(0,1/2)$, define the odd block count
\begin{equation}
 B(\varepsilon)
 =
 2\left\lceil4\log(1/\varepsilon)\right\rceil+1.
 \label{eq:certificate-block-count}
\end{equation}
Run $B(\varepsilon)$ independent blocks of length $q$ and define
\begin{equation}
 \widehat S(x)
 =
 \operatorname{median}
 \{Z_q^{(1)}(x),\ldots,Z_q^{(B(\varepsilon))}(x)\}.
 \label{eq:certificate-median}
\end{equation}

\begin{lemma}[Primitive-oracle median certificate]
\label{lem:certificate-median}
For every fixed $x\in X$,
\begin{equation}
 \Prob\!\left(
 |\widehat S(x)-S(x)|
 >
 r_q
 \right)
 \leq
 \varepsilon,
 \qquad
 r_q
 =
 \frac{8\sqrt{66}L}{\sqrt{q+32}}.
 \label{eq:certificate-median-bound}
\end{equation}
The certificate uses exactly $B(\varepsilon)q$ fresh primitive calls.
\end{lemma}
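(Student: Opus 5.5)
The plan is the standard median-of-means amplification applied to the scalar block statistics of Lemma~\ref{lem:certificate-block}. Fix $x\in X$ and let $Z^{(1)},\ldots,Z^{(B)}$ be the block outputs, with $B=B(\varepsilon)=2\lceil4\log(1/\varepsilon)\rceil+1$. Each block starts at $y_0=x$ and uses its own fresh oracle samples, so the $Z^{(b)}$ are i.i.d. Define the indicators
\[
  I_b=\mathbf 1\{|Z^{(b)}-S(x)|>r_q\}.
\]
By Lemma~\ref{lem:certificate-block}, the $I_b$ are independent Bernoulli variables with mean $p\le 1/4$.

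The first step is deterministic. Since $B$ is odd, the median is the $((B+1)/2)$-th order statistic. Suppose at most $(B-1)/2$ blocks are inaccurate. Then a strict majority of the $Z^{(b)}$ lies in the interval $[S(x)-r_q,S(x)+r_q]$. Fewer than half of the values can lie strictly below $S(x)-r_q$, and fewer than half strictly above $S(x)+r_q$, so the median lies in the interval. Hence
\[
  \{|\widehat S(x)-S(x)|>r_q\}\subseteq\Bigl\{\sum_b I_b\ge \tfrac{B+1}{2}\Bigr\}.
\]

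The second step bounds the probability of the right-hand event with Hoeffding's inequality. The event requires $\sum_b I_b - pB \ge B/2 - B/4 = B/4$, so
\[
  \Prob\Bigl(\sum_b I_b\ge B/2\Bigr)\le \exp\bigl(-2B(1/4)^2\bigr)=\exp(-B/8).
\]
This is the only place where care is needed, because $\exp(-B/8)\le\varepsilon$ requires $B\ge 8\log(1/\varepsilon)$. The chosen $B\ge 8\log(1/\varepsilon)+1$ satisfies this, so the bound closes exactly at the stated block count.

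Finally, the call count is immediate. Each block makes $q$ oracle calls, one per iteration of \eqref{eq:certificate-update}, and there are $B(\varepsilon)$ blocks, for exactly $B(\varepsilon)q$ fresh primitive calls. The only substantive point to check is the independence across blocks used in the Hoeffding step. It holds because each block is initialized deterministically at $x$, and its randomness comes only from its own samples, which are fresh and i.i.d.
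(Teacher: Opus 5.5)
Your proposal is correct and follows the paper's proof. You bound each block's failure probability by $1/4$ via Lemma~\ref{lem:certificate-block}, apply Hoeffding to the number of bad blocks to get $e^{-B(\varepsilon)/8}\le\varepsilon$, and conclude with the strict-majority property of the odd median, spelling out the order-statistic step and the Hoeffding arithmetic that the paper leaves implicit.
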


\begin{proof}
Call a block bad when its scalar error exceeds $r_q$.
Lemma~\ref{lem:certificate-block} bounds its bad probability by $1/4$.
Independence of the blocks and Hoeffding's inequality give
\[
 \Prob\!\left(
 \#\{\text{bad blocks}\}
 \geq
 B(\varepsilon)/2
 \right)
 \leq
 e^{-B(\varepsilon)/8}
 \leq
 \varepsilon.
\]
Outside this event, a strict majority of the scalar observations belong to
\[
 [S(x)-r_q,S(x)+r_q].
\]
Their scalar median belongs to the same interval, proving
\eqref{eq:certificate-median-bound}. The call count follows directly from
the block construction.
\end{proof}

Lemma~\ref{lem:certificate-median} establishes
Proposition~\ref{prop:certificate}. The amplification requires only the
second-moment oracle assumption: independent constant-success blocks convert
the one-block estimate into logarithmic confidence without a sub-Gaussian
noise model.

\subsection{Simultaneous certification of random portfolio candidates}
\label{app:cert:conditional}

Let $\mathcal T$ be the sigma-field generated by the complete candidate
training phase, and let
\[
 x_1,\ldots,x_{K_T}
\]
be the resulting $\mathcal T$-measurable candidates. Certification uses
oracle samples fresh relative to $\mathcal T$ and mutually independent across
blocks within each candidate. Cross-candidate independence is not required
for the simultaneous guarantee.

\begin{lemma}[Simultaneous conditional validity]
\label{lem:certificate-simultaneous}
Certify every $x_j$ with
\[
 B_K=B(\delta/K_T)
\]
blocks of length $q_T\geq1$. Then, almost surely with respect to
$\mathcal T$,
\begin{equation}
 \Prob\!\left(
 \bigcap_{j=1}^{K_T}
 \{|\widehat S_j-S(x_j)|\leq r_{q_T}\}
 \;\middle|\;
 \mathcal T
 \right)
 \geq
 1-\delta.
 \label{eq:certificate-simultaneous}
\end{equation}
\end{lemma}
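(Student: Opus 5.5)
The plan is to condition on the training sigma-field $\mathcal T$, apply the fixed-candidate median certificate (Lemma~\ref{lem:certificate-median}) to each $x_j$ with $\varepsilon=\delta/K_T$, and finish with a conditional union bound over the $K_T$ candidates.

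\emph{Reduction to a fixed center.} Each $x_j$ is $\mathcal T$-measurable. The certification samples for all blocks are fresh relative to $\mathcal T$, and within each candidate they are mutually independent across blocks. Consequently, conditional on $\mathcal T$, the $B_K$ block statistics $Z_{j,b}$ for candidate $j$ are produced by the recursion \eqref{eq:certificate-update} with a deterministic center $x_j\in X$ and i.i.d.\ oracle noise obeying \eqref{eq:oracle}. I would make this precise through a regular conditional distribution. Because the certification noise is independent of $\mathcal T$, the conditional law of $(Z_{j,b})_b$ given $\mathcal T$ equals the unconditional law of the block statistics centered at the fixed point $x=x_j(\omega)$. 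This is a freezing or substitution lemma for independent variables. It requires the map $(x,\xi)\mapsto Z_q(x)$ to be jointly measurable, which holds because the recursion is composed of measurable maps: projection, $G$, and norms.

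\emph{Per-candidate bound and union.} With $x_j$ frozen, Lemma~\ref{lem:certificate-median} applies directly. It requires $\varepsilon=\delta/K_T\in(0,1/2)$, which holds since $\delta<1/2$ and $K_T\ge1$; the block count $B(\delta/K_T)=B_K$ matches the algorithm's choice. It gives
\[
\Prob\bigl(|\widehat S_j-S(x_j)|>r_{q_T}\mid\mathcal T\bigr)\le \delta/K_T \quad\text{a.s.}
\]
Here $S(x_j)$ is $\mathcal T$-measurable and $r_{q_T}$ is deterministic, so the event is well posed. A union bound for conditional probabilities over $j=1,\dots,K_T$ bounds the conditional probability of the complement of the intersection by $\delta$, which yields \eqref{eq:certificate-simultaneous}. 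The union bound needs no independence across candidates, which is why cross-candidate independence is never invoked.

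\emph{Main obstacle.} The only delicate step is the conditioning argument in the first paragraph. It must justify applying a fixed-$x$ result to a random $\mathcal T$-measurable center without assuming independence between training runs. I would state it as follows: for a measurable $\phi(x,\xi)$ with $\xi$ independent of $\mathcal T$ and $X$ $\mathcal T$-measurable, $\E[\phi(X,\xi)\mid\mathcal T]=\Phi(X)$ a.s., where $\Phi(x)=\E\phi(x,\xi)$. I would then apply it with $\phi$ the indicator of the bad event.
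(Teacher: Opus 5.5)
Your proposal is correct and follows the paper's proof. You condition on $\mathcal T$, apply Lemma~\ref{lem:certificate-median} to each frozen center with $\varepsilon=\delta/K_T$ (so that $B(\delta/K_T)=B_K$), and finish with a conditional union bound that needs no cross-candidate independence. Your explicit freezing identity $\E[\phi(X,\xi)\mid\mathcal T]=\Phi(X)$ makes rigorous the paper's one-line assertion that ``conditional on $\mathcal T$, every candidate is fixed.''
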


\begin{proof}
Conditional on $\mathcal T$, every candidate is fixed and every fresh
certification block satisfies the same primitive-oracle assumption with the
same constants. Lemma~\ref{lem:certificate-median} therefore bounds each
conditional failure probability by $\delta/K_T$. A conditional union bound
gives \eqref{eq:certificate-simultaneous}. Integrating the conditional
inequality yields the same probability bound unconditionally.
\end{proof}

For the allocation in Algorithm~\ref{alg:carp},
$m\geq B_K$ implies
\[
 q_T
 =
 \left\lfloor\frac{m}{B_K}\right\rfloor
 \geq1,
 \qquad
 B_Kq_T\leq m.
\]
Thus, every median block is nonempty and the certificate remains within its
per-candidate primitive-call allocation.

\subsection{Boundary cases and scope}
\label{app:cert:scope}

\paragraph{Unbounded feasible sets.}
The certificate requires no diameter bound on $X$.
Lemma~\ref{lem:prox-localization} localizes the exact proximal point, while
Lemma~\ref{lem:certificate-moment} expresses the auxiliary oracle moment as
an affine function of the current squared proximal error. The step schedule
then absorbs the state-dependent term through strong-convexity drift.

\paragraph{Uniform oracle moment.}
The oracle assumption \eqref{eq:oracle} applies uniformly to the auxiliary
iterates generated by \eqref{eq:certificate-update}. This is the same
primitive stochastic-subgradient model used by the optimization procedure
and supplies the moment control required by
Lemma~\ref{lem:certificate-moment}.

\paragraph{Oracle access.}
Each certificate block queries
\[
 G(y_t,\xi_t)
 \quad\text{and}\quad
 \Pi_X
\]
only. Function values, $f^\star$, exact proximal points, Moreau gradients,
and KL parameters are not queried. The known quantities $(\rho,L)$ determine
the certificate steps and confidence radius.

\paragraph{Fresh certification.}
Freshness is imposed relative to the trained portfolio. Conditional on the
training sigma-field $\mathcal T$, this makes every candidate fixed and
allows the one-candidate certificate to be applied directly before a union
bound over the portfolio. This separation also prevents candidate training
from adapting to the certification samples used for final selection.

\paragraph{Projected composite structure.}
The certificate is formulated for
\[
 f=g+\iota_X,
\]
matching the optimization model of the main theorem. Projection onto $X$ is a
deterministic operation and all stochastic information continues to enter
through the primitive oracle \eqref{eq:oracle}.

\paragraph{Failure-event control.}
Lemma~\ref{lem:prox-localization} gives
\[
 S(x)\leq4L
\]
uniformly over every portfolio candidate. Hence the complement of the
simultaneous certification event contributes at most $4L\delta$ to the expectation oracle inequality.

\subsection{Interface to portfolio selection}
\label{app:cert:contract}

For the trained portfolio
$x_1,\ldots,x_{K_T}$, define the simultaneous certification event
\[
 \mathcal E_{\rm cert}
 =
 \bigcap_{j=1}^{K_T}
 \left\{
 |\widehat S_j-S(x_j)|
 \leq
 r_{q_T}
 \right\}.
\]
The results above establish, conditionally on the complete training phase,
\begin{equation}
 \Prob(\mathcal E_{\rm cert}\mid\mathcal T)
 \geq
 1-\delta,
 \qquad
 r_{q_T}
 =
 \frac{8\sqrt{66}L}{\sqrt{q_T+32}},
 \label{eq:certificate-export-event}
\end{equation}
with certification cost
\begin{equation}
 B_Kq_T\leq m
 \label{eq:certificate-export-cost}
\end{equation}
for each candidate. In addition,
\begin{equation}
 0\leq S(x_j)\leq4L
 \qquad
 \text{for every }j.
 \label{eq:certificate-export-uniform}
\end{equation}

Equations~\eqref{eq:certificate-export-event}--%
\eqref{eq:certificate-export-uniform} form the complete certification
interface used by Appendix~\ref{app:master}. On
$\mathcal E_{\rm cert}$, all candidate stationarity values are simultaneously
localized by observable scalar intervals; on its complement, the deterministic
bound $S(x_j)\leq4L$ controls the expectation contribution. Thus candidate
training, certification, and final selection remain within one
primitive-oracle model and one explicitly accounted horizon.
% END appendices/app_d_certificate

% BEGIN appendices/app_e_master
\section{Certified Portfolio and Master Theorem}
\label{app:master}

This appendix proves Theorem~\ref{thm:master} by composing the three
interfaces established above: the finite-cap restart guarantee of
Appendix~\ref{app:restart}, its one-sided discretization in
Appendix~\ref{app:grid}, and the fresh primitive-oracle certificate of
Appendix~\ref{app:certificate}. The proof separates four operations:
candidate construction, simultaneous certification, deterministic selection,
and exact primitive-call accounting.

All selection probabilities are conditional on the complete training
sigma-field $\mathcal T$ introduced in
Appendix~\ref{app:cert:conditional}. Candidate quality remains an
expectation/RMS guarantee. This separation allows the training procedures to
be arbitrarily dependent while retaining simultaneous fresh certification of
their realized outputs.

Recall the allocation
\begin{equation}
 K=K_T,\qquad
 n=m=\left\lfloor\frac{T}{2K}\right\rfloor,\qquad
 B_K=2\left\lceil4\log(K/\delta)\right\rceil+1,\qquad
 q_T=\left\lfloor\frac{m}{B_K}\right\rfloor.
 \label{eq:master-allocation}
\end{equation}
The finite-budget theorem operates under
\begin{equation}
 T\geq4K,\qquad
 m\geq B_K,\qquad
 \mathfrak K_T^{\rm aff}(f;n)\neq\varnothing.
 \label{eq:master-budget-threshold}
\end{equation}

\subsection{Certified upper-confidence selection}
\label{app:master:selection}

The selection argument is deterministic once simultaneous certificate
intervals are available. It requires no independence among candidates or
among their training procedures.

\begin{lemma}[Upper-confidence portfolio selection]
\label{lem:master-selection}
Let $x_1,\ldots,x_K\in X$, write $S_j=S(x_j)$, and suppose estimates
$\widehat S_j$ have deterministic radii $r_j\geq0$. Choose any measurable
tie-broken minimizer
\[
 \widehat j
 \in
 \arg\min_{1\leq j\leq K}
 (\widehat S_j+r_j).
\]
On the simultaneous event
\begin{equation}
 \mathcal E
 =
 \bigcap_{j=1}^K
 \{|\widehat S_j-S_j|\leq r_j\},
 \label{eq:master-selection-event}
\end{equation}
we have, simultaneously for every $j$,
\begin{equation}
 S(x_{\widehat j})
 \leq
 S(x_j)+2r_j.
 \label{eq:master-pathwise-selection}
\end{equation}
In particular, if all radii equal $r$,
\begin{equation}
 S(x_{\widehat j})
 \leq
 \min_{1\leq j\leq K}S(x_j)+2r
 \qquad\text{on }\mathcal E.
 \label{eq:master-equal-radius-selection}
\end{equation}
\end{lemma}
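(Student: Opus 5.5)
The argument is deterministic and purely order-theoretic. Fix an outcome in the simultaneous event $\mathcal E$ of \eqref{eq:master-selection-event}, so that $|\widehat S_j-S_j|\le r_j$ holds for every $j$ at once. No probabilistic input is needed beyond membership in $\mathcal E$. In particular, independence among the candidates or their training runs plays no role, which matches the remark preceding the lemma.

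The chain of inequalities runs as follows. First, apply the lower half of the certificate interval at the selected index:
\[
S(x_{\widehat j})\le \widehat S_{\widehat j}+r_{\widehat j}.
\]
Second, since $\widehat j$ minimizes $\widehat S_j+r_j$, we have $\widehat S_{\widehat j}+r_{\widehat j}\le \widehat S_j+r_j$ for every $j$. Third, apply the upper half of the interval at the arbitrary index $j$, namely $\widehat S_j\le S_j+r_j$. Chaining the three inequalities gives
\[
S(x_{\widehat j})\le S_j+2r_j
\]
for every $j$ simultaneously, which is \eqref{eq:master-pathwise-selection}. Measurability of the tie-broken minimizer matters only so that $x_{\widehat j}$ is a random variable for later expectations; it does not enter this pathwise step.

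For the equal-radius case $r_j\equiv r$, the bound $S(x_{\widehat j})\le S_j+2r$ holds for all $j$, so taking the minimum over $j$ on the right yields \eqref{eq:master-equal-radius-selection}.

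There is no real obstacle. The only points needing care are applying the two-sided interval with the correct sign at each index, and noting that the event $\mathcal E$ is simultaneous, so the arbitrary $j$ and the data-dependent index $\widehat j$ are both covered by the same event.
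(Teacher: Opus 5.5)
Your proof is correct and is the same argument as the paper's: the three-step chain $S(x_{\widehat j})\le \widehat S_{\widehat j}+r_{\widehat j}\le \widehat S_j+r_j\le S(x_j)+2r_j$ on $\mathcal E$, followed by a minimum over $j$ in the equal-radius case. Your remark that the simultaneous event covers the data-dependent index $\widehat j$ as well as the arbitrary $j$ is exactly the point the paper relies on.
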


\begin{proof}
On $\mathcal E$, for every $j$,
\[
 S(x_{\widehat j})
 \leq
 \widehat S_{\widehat j}+r_{\widehat j}
 \leq
 \widehat S_j+r_j
 \leq
 S(x_j)+2r_j.
\]
The middle inequality is exactly the selection rule. Taking the minimum gives
\eqref{eq:master-equal-radius-selection}.
\end{proof}

Fresh certification converts this pathwise statement into an expectation
oracle inequality for random trained candidates.

\begin{lemma}[Expectation selection under fresh certificates]
\label{lem:master-expectation-selection}
Let $x_1,\ldots,x_K$ be random, $\mathcal T$-measurable candidates. Suppose
fresh certification satisfies
\[
 \Prob(\mathcal E\mid\mathcal T)\geq1-\delta
\]
almost surely, all certificate radii equal $r$, and
\[
 S(x)\leq4L
 \qquad\text{for every }x\in X.
\]
Then, for every candidate index $j$,
\begin{equation}
 \E S(x_{\widehat j})
 \leq
 \E S(x_j)+2r+4L\delta.
 \label{eq:master-expectation-selection}
\end{equation}
\end{lemma}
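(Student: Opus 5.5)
The argument splits $S(x_{\widehat j})$ along the simultaneous event $\mathcal E$ and its complement, and conditions on $\mathcal T$. Write
\[
 S(x_{\widehat j})
 =
 S(x_{\widehat j})\mathbf 1_{\mathcal E}
 +
 S(x_{\widehat j})\mathbf 1_{\mathcal E^c}.
\]

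\emph{Step 1 (good event).} On $\mathcal E$, Lemma~\ref{lem:master-selection} with all radii equal to $r$ gives the pathwise bound $S(x_{\widehat j})\le S(x_j)+2r$ for the fixed index $j$. Since $S(x_j)\ge0$, this yields
\[
 S(x_{\widehat j})\mathbf 1_{\mathcal E}
 \le
 \bigl(S(x_j)+2r\bigr)\mathbf 1_{\mathcal E}
 \le
 S(x_j)+2r .
\]
Here $x_j$ is $\mathcal T$-measurable, but the inequality is pathwise, so no measurability issue arises.

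\emph{Step 2 (bad event).} On $\mathcal E^c$, the uniform bound $S(x)\le 4L$ from Lemma~\ref{lem:prox-localization} gives $S(x_{\widehat j})\mathbf 1_{\mathcal E^c}\le 4L\,\mathbf 1_{\mathcal E^c}$. Taking conditional expectation given $\mathcal T$ and using $\Prob(\mathcal E^c\mid\mathcal T)\le\delta$ almost surely, we get
\[
 \E\bigl[S(x_{\widehat j})\mathbf 1_{\mathcal E^c}\mid\mathcal T\bigr]\le 4L\delta .
\]

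\emph{Step 3 (combine).} Add the two pieces and apply the tower property:
\[
 \E S(x_{\widehat j})\le \E S(x_j)+2r+4L\delta .
\]
The only point needing care is measurability. The selected index $\widehat j$ depends on the fresh certification samples as well as on $\mathcal T$, so $S(x_{\widehat j})$ is not $\mathcal T$-measurable. It is, however, a measurable function of $(\mathcal T,\text{certification samples})$, because $\widehat j$ is a measurable tie-broken argmin over finitely many indices. The bounds in Steps 1 and 2 are pathwise or conditional on $\mathcal T$, so they integrate cleanly.

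\emph{Main obstacle.} I do not expect a genuine obstacle. The one subtlety is that the failure contribution must use the deterministic bound $4L$ rather than an RMS bound, because $\mathcal E^c$ and the candidate quality may be correlated through $\mathcal T$. The uniform bound $S\le 4L$ from self-localization avoids any Cauchy--Schwarz coupling between the two.
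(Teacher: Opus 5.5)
Your proof is correct and follows essentially the same route as the paper: you split along $\mathcal E$ and $\mathcal E^c$, apply the pathwise selection bound of Lemma~\ref{lem:master-selection} on $\mathcal E$ and the uniform bound $S\le 4L$ on $\mathcal E^c$, and then integrate $\Prob(\mathcal E^c\mid\mathcal T)\le\delta$. Your explicit conditioning on $\mathcal T$ and your measurability remark for $\widehat j$ make the same argument slightly more explicit.
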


\begin{proof}
Lemma~\ref{lem:master-selection} gives, on $\mathcal E$,
\[
 S(x_{\widehat j})
 \leq
 S(x_j)+2r.
\]
On $\mathcal E^c$, Lemma~\ref{lem:prox-localization} gives
\[
 S(x_{\widehat j})\leq4L
\]
for every selected candidate. Hence,
\begin{align*}
 \E S(x_{\widehat j})
 &\leq
 \E[S(x_j)\mathbf 1_{\mathcal E}]
 +2r\Prob(\mathcal E)
 +4L\Prob(\mathcal E^c)
 \leq
 \E S(x_j)+2r+4L\delta,
\end{align*}
where the final inequality follows by integrating
\[
 \Prob(\mathcal E^c\mid\mathcal T)\leq\delta.
\]
\end{proof}

Appendix~\ref{app:certificate} supplies this event with
\begin{equation}
 r=r_{q_T}
 =
 \frac{8\sqrt{66}L}{\sqrt{q_T+32}},
 \label{eq:master-certificate-radius}
\end{equation}
simultaneously over the complete trained portfolio.

\subsection{Exact primitive-oracle accounting}
\label{app:master:budget}

The allocation in \eqref{eq:master-allocation} divides the horizon equally
between candidate training and fresh certification. Floors and incomplete
restart stages remain inside this accounting.

\begin{lemma}[Total primitive-oracle budget]
\label{lem:master-budget}
Under
\eqref{eq:master-allocation}--\eqref{eq:master-budget-threshold}, every
certificate block contains at least one primitive call and CARP-KL uses at
most $T$ primitive stochastic-subgradient calls:
\begin{equation}
 Kn+KB_Kq_T\leq T.
 \label{eq:master-total-budget}
\end{equation}
Moreover,
\begin{equation}
 n\geq\frac{T}{4K}.
 \label{eq:master-n-lower}
\end{equation}
\end{lemma}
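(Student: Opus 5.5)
The argument is elementary floor-and-cap arithmetic on the allocation \eqref{eq:master-allocation}, in three steps.

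\emph{Nonempty blocks.} The threshold $m\ge B_K$ in \eqref{eq:master-budget-threshold} gives $q_T=\lfloor m/B_K\rfloor\ge1$, so every certification block contains at least one primitive call. The floor also gives $B_Kq_T\le m$.

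\emph{Total budget.} Each candidate's training phase executes only complete restart stages whose prescribed lengths fit under the cap $n$. Hence it consumes at most $n$ calls, and no calls are reassigned across candidates. Each candidate's certification phase consumes exactly $B_Kq_T\le m$ calls. Summing over the $K$ candidates gives
\[
Kn+KB_Kq_T\le K(n+m)=2K\left\lfloor\frac{T}{2K}\right\rfloor\le T,
\]
which is \eqref{eq:master-total-budget}, exactly as in \eqref{eq:budget}. No further calls are made, since selection only compares the computed scalars $\widehat S_j+r_{q_T}$.

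\emph{Lower bound on $n$.} This is the only step needing care. Set $a=T/(2K)$. The hypothesis $T\ge4K$ gives $a\ge2$. For $a\ge1$ we have $\lfloor a\rfloor\ge a-1$ and $\lfloor a\rfloor\ge1$. Averaging these gives $\lfloor a\rfloor\ge a/2$. Indeed, if $a<2$ then $\lfloor a\rfloor=1>a/2$, and if $a\ge2$ then $a-1\ge a/2$. Therefore
\[
n=\lfloor a\rfloor\ge \frac{a}{2}=\frac{T}{4K},
\]
which is \eqref{eq:master-n-lower}. I expect no real obstacle. The one point to state explicitly is that atomic stage execution ensures the training cost never exceeds the allocated cap $n$, so unused calls are simply dropped.
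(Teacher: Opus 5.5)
Your proposal is correct and follows the paper's proof essentially step for step: $q_T\ge1$ and $B_Kq_T\le m$ from the floor, the chain $Kn+KB_Kq_T\le K(n+m)=2K\lfloor T/(2K)\rfloor\le T$, and $\lfloor z\rfloor\ge z-1\ge z/2$ for $z=T/(2K)\ge2$. Your averaging remark also shows that $n\ge T/(4K)$ already holds under the weaker condition $T\ge2K$, a harmless refinement that the paper does not need.
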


\begin{proof}
Since $m\geq B_K$,
\[
 q_T
 =
 \left\lfloor\frac{m}{B_K}\right\rfloor
 \geq1.
\]
The floor definition also gives
\[
 B_Kq_T\leq m.
\]
Therefore,
\[
 Kn+KB_Kq_T
 \leq
 K(n+m)
 =
 2K\left\lfloor\frac{T}{2K}\right\rfloor
 \leq
 T,
\]
which proves \eqref{eq:master-total-budget}.

For \eqref{eq:master-n-lower}, set
\[
 z=\frac{T}{2K}.
\]
The condition $T\geq4K$ gives $z\geq2$, and hence
\[
 n
 =
 \lfloor z\rfloor
 \geq
 z-1
 \geq
 \frac z2
 =
 \frac{T}{4K}.
\]
The budget charges the complete cap $n$ to every candidate even when its last
incomplete restart stage leaves unused calls. Projection and random selection
of an inner iterate do not query the stochastic oracle.
\end{proof}

\subsection{Oracle inequality at the allocated cap}
\label{app:master:captheorem}

The allocated-cap form is the direct composition of the discretization and
certification interfaces. It isolates the statistical cost of certified
selection from the later cost of sharing the horizon among portfolio
candidates.

\begin{theorem}[Certified oracle inequality at the allocated cap]
\label{thm:master-cap}
Under the assumptions of Theorem~\ref{thm:master}, CARP-KL satisfies
% \begin{align}
%  \E S(\widehat x_T)
%  &\leq
%  C_{\rm grid}
%  \inf_{\kappa\in\mathfrak K_T^{\rm aff}(f;n)}
%  \mathcal R_n(\kappa)
%  \nonumber\\
%  &\quad+
%  \frac{16\sqrt{66}L}{\sqrt{q_T+32}}
%  +4L\delta.
%  \label{eq:master-cap-bound}
% \end{align}
\begin{equation}
  \mathbb{E} S(\widehat{x}_T)
  \le
  C_{\mathrm{grid}} \inf_{\kappa \in \mathfrak{K}_T^{\mathrm{aff}}(f;n)} \mathcal{R}_n(\kappa)
  + \frac{16\sqrt{66}L}{\sqrt{q_T+32}}
  + 4L\delta.
  \label{eq:master-cap-bound}
\end{equation}
\end{theorem}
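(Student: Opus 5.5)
The plan is to compose the three interfaces already established, working at the allocated training cap $n$ rather than the horizon $T$. First, fix an arbitrary comparator $\kappa\in\mathfrak K_T^{\rm aff}(f;n)$, which is nonempty by \eqref{eq:master-budget-threshold}. Lemma~\ref{lem:grid-cardinality} supplies its one-sided neighbor $\bar\kappa=(\bar\theta,\underline c,\bar\Delta)\in\cG_T$. This is a genuine portfolio index, say $\bar\kappa=\kappa_{j(\kappa)}$, and $j(\kappa)$ is deterministic: it depends only on $\kappa$ and the declared grid, not on any sample. By Lemma~\ref{lem:grid-affordability}, the first stage of $\bar\kappa$ fits under cap $n$. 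Step~2 of Algorithm~\ref{alg:carp} therefore returns exactly the completed-stage output $x_n(\bar\kappa)$, and \eqref{eq:grid-export-mean} gives
\[
\E S(x_{j(\kappa)})\le C_{\rm grid}\mathcal R_n(\kappa).
\]
I will check that the candidate produced by the algorithm coincides with the object controlled in Lemma~\ref{lem:grid-restart-transfer}. This holds because stages are executed atomically and the last completed output is returned.

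Second, I invoke the certification interface. Lemma~\ref{lem:master-budget} gives $q_T\ge1$ and $B_Kq_T\le m$ under $m\ge B_K$, so the blocks are well defined. Lemma~\ref{lem:certificate-simultaneous}, with $B_K=B(\delta/K_T)$, gives $\Prob(\mathcal E_{\rm cert}\mid\mathcal T)\ge1-\delta$ almost surely, with common radius $r=r_{q_T}$. The uniform bound $S\le4L$ comes from Lemma~\ref{lem:prox-localization}. Lemma~\ref{lem:master-expectation-selection}, applied with the index $j=j(\kappa)$, then yields
\[
\E S(\widehat x_T)\le \E S(x_{j(\kappa)})+2r_{q_T}+4L\delta\le C_{\rm grid}\mathcal R_n(\kappa)+\frac{16\sqrt{66}L}{\sqrt{q_T+32}}+4L\delta.
\]
The factor $2r_{q_T}=16\sqrt{66}L/\sqrt{q_T+32}$ matches the stated second term.

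Finally, the right-hand side holds for every $\kappa$ in the affordable class, so taking the infimum over $\kappa\in\mathfrak K_T^{\rm aff}(f;n)$ gives \eqref{eq:master-cap-bound}. No measurability issue arises in this step, because the expectation bound holds separately for each fixed $\kappa$ before the infimum is taken. The total-call claim is already covered by Lemma~\ref{lem:master-budget}.

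The main obstacle is the legitimacy of applying Lemma~\ref{lem:master-expectation-selection} with a \emph{fixed} index $j(\kappa)$ when the candidates are random and dependent. This requires two things. First, every $x_j$ must be $\mathcal T$-measurable. Second, the certificates must be fresh conditional on $\mathcal T$, so that the conditional union bound is valid. I will verify both by defining $\mathcal T$ as the sigma-field of all training randomness, including the random inner-iterate selection within stages. A related point is that the common-region clause of Assumption~\ref{ass:klc} must cover the trajectory of $\bar\kappa$ specifically. This is built into the definition of a valid description, and I will cite it explicitly when invoking Lemma~\ref{lem:grid-restart-transfer}.
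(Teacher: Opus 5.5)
Your proposal is correct and follows the paper's own proof essentially step for step. Like the paper, you fix an affordable comparator $\kappa$, pass to its one-sided grid neighbor $\bar\kappa$ with the exported bound $\E S(x_n(\bar\kappa))\le C_{\rm grid}\mathcal R_n(\kappa)$, apply the fresh-certificate expectation-selection lemma at that fixed portfolio index with radius $r_{q_T}$ and the uniform bound $S\le 4L$, and then take the infimum over $\kappa$; your extra checks (the index $j(\kappa)$ is deterministic, $\mathcal T$-measurability, freshness) are consistent with the paper's setup.
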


\begin{proof}
Fix any
\[
 \kappa\in\mathfrak K_T^{\rm aff}(f;n).
\]
Appendix~\ref{app:grid} constructs a one-sided grid neighbor
\[
 \bar\kappa\in\mathcal G_T
\]
whose first stage is affordable under cap $n$ and whose restart trajectory is
recursion-valid relative to $\kappa$. The exported grid guarantee
\eqref{eq:grid-affordable-export} gives
\begin{equation}
 \bigl(\E S(x_n(\bar\kappa))^2\bigr)^{1/2}
 \leq
 \mathcal R_n(\bar\kappa)
 \leq
 C_{\rm grid}\mathcal R_n(\kappa).
 \label{eq:master-neighbor-bound}
\end{equation}
By Cauchy--Schwarz,
\begin{equation}
 \E S(x_n(\bar\kappa))
 \leq
 C_{\rm grid}\mathcal R_n(\kappa).
 \label{eq:master-neighbor-firstmoment}
\end{equation}

The simultaneous fresh certificate of
Appendix~\ref{app:certificate} has radius
\[
 r_{q_T}
 =
 \frac{8\sqrt{66}L}{\sqrt{q_T+32}}.
\]
Applying Lemma~\ref{lem:master-expectation-selection} with the neighboring
candidate therefore yields
\[
 \E S(\widehat x_T)
 \leq
 C_{\rm grid}\mathcal R_n(\kappa)
 +
 \frac{16\sqrt{66}L}{\sqrt{q_T+32}}
 +
 4L\delta.
\]
This holds for every affordable valid comparator. Taking the infimum proves
\eqref{eq:master-cap-bound}.
\end{proof}

Theorem~\ref{thm:master-cap} separates the two finite-budget effects:
$\mathcal R_n$ records the optimization quality attainable by an affordable
KL description under its allocated training cap, while the final two terms
record the cost of observable same-oracle selection.

\subsection{Full-horizon comparison and proof of Theorem~\ref{thm:master}}
\label{app:master:theorem}

The main theorem expresses the allocated-cap guarantee relative to a
known-description run receiving the full horizon. The following elementary
comparison makes the portfolio-sharing cost explicit.

\begin{lemma}[Budget-sharing comparison]
\label{lem:master-budget-sharing}
For every description
$\kappa=(\theta,c,\Delta)$
and every $1\leq n\leq T$,
\begin{equation}
 \mathcal R_n(\kappa)
 \leq
 \left(\frac{T}{n}\right)^{\nu_\theta}
 \mathcal R_T(\kappa).
 \label{eq:master-budget-sharing}
\end{equation}
Under \eqref{eq:master-budget-threshold},
\begin{equation}
 \mathcal R_n(\kappa)
 \leq
 (4K_T)^{\nu_\theta}
 \mathcal R_T(\kappa).
 \label{eq:master-fourK}
\end{equation}
\end{lemma}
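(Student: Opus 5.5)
The plan is to compare the two bounds through the single quantity $Q_u(\kappa)=1+b_\theta u/(1+A_\kappa)$. By \eqref{eq:grid-rate-definition},
\[
  \mathcal R_u(\kappa)=e^{1/4}c\Delta^\theta Q_u(\kappa)^{-\nu_\theta},
\]
and the prefactor $e^{1/4}c\Delta^\theta$ does not depend on $u$. The ratio therefore reduces to
\[
  \frac{\mathcal R_n(\kappa)}{\mathcal R_T(\kappa)}
  =\left(\frac{Q_T(\kappa)}{Q_n(\kappa)}\right)^{\nu_\theta},
\]
and it suffices to show $Q_T(\kappa)\le (T/n)\,Q_n(\kappa)$ for $1\le n\le T$.

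Write $\lambda=T/n\ge1$ and $s=b_\theta n/(1+A_\kappa)\ge0$. Then $Q_n=1+s$ and $Q_T=1+\lambda s$. Because $\lambda\ge1$,
\[
  1+\lambda s\le\lambda+\lambda s=\lambda(1+s).
\]
Raising both sides to the power $\nu_\theta>0$ preserves the inequality, which gives \eqref{eq:master-budget-sharing}. The step relies on the affine form of $Q_u$ in $u$ and on the additive $1$ being dominated by $\lambda$.

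For \eqref{eq:master-fourK}, I will invoke Lemma~\ref{lem:master-budget}. Under \eqref{eq:master-budget-threshold}, that lemma gives $n\ge T/(4K_T)$, so $T/n\le4K_T$. Since $\nu_\theta>0$, the map $z\mapsto z^{\nu_\theta}$ is increasing, and therefore $(T/n)^{\nu_\theta}\le(4K_T)^{\nu_\theta}$. The hypothesis $1\le n\le T$ holds because $n\ge T/(4K_T)\ge1$ by $T\ge4K_T$, while $n\le T/(2K_T)\le T$.

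No real obstacle is expected. The only point needing care is that the prefactor is identical on both sides, which holds because the same description $\kappa$ appears in both $\mathcal R_n$ and $\mathcal R_T$.

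Once the lemma is in hand, combining it with Theorem~\ref{thm:master-cap} yields Theorem~\ref{thm:master}. Fix $\kappa\in\Kaff$ and apply \eqref{eq:master-fourK} inside the infimum, which replaces $\mathcal R_n(\kappa)$ by $(4K_T)^{\nu_\theta}\mathcal R_T(\kappa)$.
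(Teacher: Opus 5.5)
Your proposal is correct and follows the paper's argument. The paper likewise uses the affine form of $Q_u$ to get $Q_n\ge (n/T)\,Q_T$, and then applies $n\ge T/(4K_T)$ from Lemma~\ref{lem:master-budget}. Your explicit check that $1\le n\le T$ holds under the threshold is a small detail the paper leaves implicit.
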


\begin{proof}
Write
\[
 Q_u
 =
 1+\frac{b_\theta u}{1+A_\kappa}.
\]
Since $n\leq T$,
% \begin{align*}
%  Q_n
%  &=
%  1+\frac nT(Q_T-1)\\
%  &=
%  \left(1-\frac nT\right)
%  +\frac nTQ_T\\
%  &\geq
%  \frac nTQ_T.
% \end{align*}
\[
  Q_n = 1 + \frac{n}{T}(Q_T - 1) = \left(1 - \frac{n}{T}\right) + \frac{n}{T}Q_T \ge \frac{n}{T}Q_T.
\]
Substitution into the definition of $\mathcal R_n$ gives
\[
 \mathcal R_n(\kappa)
 \leq
 \left(\frac{T}{n}\right)^{\nu_\theta}
 \mathcal R_T(\kappa).
\]
Lemma~\ref{lem:master-budget} gives
\[
 \frac{T}{n}\leq4K_T,
\]
which proves \eqref{eq:master-fourK}.
\end{proof}

\begin{proof}[Proof of Theorem~\ref{thm:master}]
Combining Theorem~\ref{thm:master-cap} with
Lemma~\ref{lem:master-budget-sharing},
% \begin{align*}
%  \E S(\widehat x_T)
%  &\leq
%  C_{\rm grid}
%  \inf_{\kappa\in\mathfrak K_T^{\rm aff}(f;n)}
%  \left\{
%  (4K_T)^{\nu_\theta}
%  \mathcal R_T(\kappa)
%  \right\}\\
%  &\quad+
%  \frac{16\sqrt{66}L}{\sqrt{q_T+32}}
%  +4L\delta.
% \end{align*}
\[
  \mathbb{E}S(\widehat{x}_T) \le C_{\mathrm{grid}} \inf_{\kappa \in \mathfrak{K}_T^{\mathrm{aff}}(f;n)} \left\{ (4K_T)^{\nu_\theta} \mathcal{R}_T(\kappa) \right\} + \frac{16\sqrt{66}L}{\sqrt{q_T+32}} + 4L\delta.
\]
The factor $(4K_T)^{\nu_\theta}$ remains inside the infimum because
$\nu_\theta$ is part of the comparator description. The total
primitive-oracle bound follows from Lemma~\ref{lem:master-budget}.
\end{proof}

This decomposition makes the adaptive overhead explicit. Relative to the
full-horizon known-description guarantee, candidate sharing contributes
$(4K_T)^{\nu_\theta}$, certification contributes
$16\sqrt{66}L/\sqrt{q_T+32}$, and certificate failure contributes
$4L\delta$.

\subsection{Asymptotic rate and eventual affordability}
\label{app:master:rate}

We now derive Corollary~\ref{cor:rate} for any fixed admissible KL
description.

\begin{proof}[Proof of Corollary~\ref{cor:rate}]
Fix an admissible
\[
 \kappa=(\theta,c,\Delta)
\]
with finite $A_\kappa$ that lies in the declared expanding ranges for all
sufficiently large $T$. Since
\[
 K_T=O((\log T)^3),
\]
the per-candidate training allocation satisfies
\[
 n
 =
 \left\lfloor\frac{T}{2K_T}\right\rfloor
 \longrightarrow\infty.
\]
The comparison constant
\[
 C_A=16e^{4r_\Delta}
\]
is independent of $T$, so eventually
\[
 C_A(1+A_\kappa)\leq n.
\]
Hence, every fixed admissible description is eventually affordable.

For fixed $\kappa$,
\begin{equation}
 \mathcal R_T(\kappa)
 =
 O(T^{-\nu_\theta}),
 \qquad
 \nu_\theta
 =
 \frac{\theta}{4\theta-1}.
 \label{eq:master-fixed-rate}
\end{equation}
Since
\[
 K_T=O((\log T)^3),
\]
the optimization contribution in Theorem~\ref{thm:master} is
\begin{equation}
 O\!\left(
 (\log T)^{3\nu_\theta}
 T^{-\nu_\theta}
 \right).
 \label{eq:master-optimization-rate}
\end{equation}
Set
\[
 \delta=T^{-1}.
\]
Then,
\[
 B_K
 =
 O(\log(K_TT))
 =
 O(\log T),
 \qquad
 m
 =
 \Theta(T/K_T).
\]
For all sufficiently large $T$,
$m\geq2B_K$, and therefore
\[
 q_T
 =
 \left\lfloor\frac{m}{B_K}\right\rfloor
 \geq
 \frac{m}{2B_K}
 =
 \Omega\!\left(
 \frac{T}{(\log T)^4}
 \right).
\]
Consequently,
\begin{equation}
 \frac{16\sqrt{66}L}{\sqrt{q_T+32}}
 =
 O\!\left(
 \frac{L(\log T)^2}{\sqrt T}
 \right),
 \qquad
 4L\delta
 =
 O(L/T).
 \label{eq:master-certificate-rate}
\end{equation}
Finally,
\[
 \nu_\theta
 \in
 [1/3,1/2].
\]
The certificate therefore decays at least as fast in polynomial order as the
optimization contribution. Combining
\eqref{eq:master-optimization-rate} and
\eqref{eq:master-certificate-rate} gives
\[
 \E
 \|\nabla f_{1/(2\rho)}(\widehat x_T)\|
 =
 \widetilde O\!\left(
 T^{-\theta/(4\theta-1)}
 \right).
\]
At $\theta=1/2$, both contributions have polynomial exponent $1/2$ and the
certificate carries the larger logarithmic factor; this is absorbed by the
tilde notation.
\end{proof}
Thus, CARP-KL preserves the polynomial stationarity exponent of every fixed
valid description once that description becomes affordable, while charging
both candidate construction and observable selection to the same primitive
oracle horizon.

\subsection{Finite-budget interpretation}
\label{app:master:finite}

The finite-budget theorem applies to the affordable valid comparator class
while the executable portfolio itself contains every declared grid schedule.
This distinction gives the algorithm a simple implementation and the theorem
a precise comparator interpretation.
\paragraph{Affordable comparison.}
Theorem~\ref{thm:master} compares against
\[
 \mathfrak K_T^{\rm aff}(f;n),
\]
the valid descriptions whose one-sided neighbors can complete their first
restart stage under the allocated training cap. Every fixed admissible
description enters this class for all sufficiently large horizons.

\paragraph{Executable candidates.}
Every declared grid schedule can be run without knowing whether its
description is valid. A candidate whose first restart stage does not fit the
cap returns $x_0$ and remains certifiable. Such candidates do not enter the
oracle comparison until affordability holds, but they remain legitimate
members of the executable portfolio.

\paragraph{Validity-agnostic selection.}
CARP-KL never identifies which grid descriptions are valid. The proof needs
only one recursion-valid neighbor for each comparator entering the infimum.
All candidate outputs, including those generated by other grid schedules,
satisfy the universal stationarity bound
\[
 S(x)\leq4L
\]
and receive the same fresh certification procedure. Hence the selection rule
is independent of KL validity labels.

\paragraph{Conditional confidence and expected optimization quality.}
The simultaneous certification event is high probability conditional on the
complete training phase. The restart guarantees are RMS and expectation
bounds over optimization randomness. Their composition therefore yields an
expected-stationarity oracle inequality selected through high-confidence
fresh certificates.

\paragraph{Full-horizon comparator.}
The quantity $\mathcal R_T(\kappa)$ represents the theoretical guarantee of the known-description restart
procedure if that description received the full primitive-call horizon.
CARP-KL instead shares the horizon across the portfolio. The factor $(4K_T)^{\nu_\theta}$ makes this resource-sharing cost explicit.

\paragraph{Primitive-call accounting.}
Incomplete restart-stage remainders and unused certification remainders need
not be redistributed for the theorem. They remain within the global budget
\eqref{eq:master-total-budget}. Projection operations are deterministic and
are not counted as stochastic primitive-oracle calls.

\subsection{Reduced portfolio with a shared gap bound}
\label{app:master:reduced}

The same composition immediately yields the reduced-grid oracle inequality
when a common conservative initial-gap bound $\Delta_+$ is available. Let
\[
 K_{\rm red}
 =
 K_T^{\rm red},
 \qquad
 n_{\rm red}
 =
 m_{\rm red}
 =
 \left\lfloor
 \frac{T}{2K_{\rm red}}
 \right\rfloor,
\]
and define
\begin{equation}
 B_{\rm red}
 =
 2\left\lceil
 4\log(K_{\rm red}/\delta)
 \right\rceil+1,
 \qquad
 q_{\rm red}
 =
 \left\lfloor
 \frac{m_{\rm red}}{B_{\rm red}}
 \right\rfloor.
 \label{eq:reduced-master-allocation}
\end{equation}
Assume
\[
 T\geq4K_{\rm red},
 \qquad
 m_{\rm red}\geq B_{\rm red},
 \qquad
 \mathfrak K_{T,\rm red}^{\rm aff}(f;n_{\rm red})
 \neq\varnothing.
\]

For every
\[
 \kappa
 \in
 \mathfrak K_{T,\rm red}^{\rm aff}(f;n_{\rm red}),
\]
Appendix~\ref{app:grid} constructs
\[
 \widetilde\kappa
 =
 (\bar\theta,\underline c,\Delta_+)
\]
with
\begin{equation}
 \bigl(
 \E S(x_{n_{\rm red}}(\widetilde\kappa))^2
 \bigr)^{1/2}
 \leq
 C_{\rm red}
 \Gamma_\Delta^{\bar\theta}
 \mathcal R_{n_{\rm red}}(\kappa).
 \label{eq:reduced-master-candidate}
\end{equation}
Applying the same fresh certificate and selection argument gives
\begin{align}
 \E S(\widehat x_T^{\rm red})
 &\leq
 C_{\rm red}
 \inf_{\kappa\in
 \mathfrak K_{T,\rm red}^{\rm aff}(f;n_{\rm red})}
 \left\{
 \Gamma_\Delta(\kappa)^{\bar\theta(\kappa)}
 \mathcal R_{n_{\rm red}}(\kappa)
 \right\}
 +
 \frac{16\sqrt{66}L}{\sqrt{q_{\rm red}+32}}
 +4L\delta.
 \label{eq:reduced-master-cap}
\end{align}
The budget-sharing comparison then yields
\begin{align}
 \E S(\widehat x_T^{\rm red})
 &\leq
 C_{\rm red}
 \inf_{\kappa\in
 \mathfrak K_{T,\rm red}^{\rm aff}(f;n_{\rm red})}
 \left\{
 \Gamma_\Delta(\kappa)^{\bar\theta(\kappa)}
 (4K_{\rm red})^{\nu_\theta}
 \mathcal R_T(\kappa)
 \right\}
 +
 \frac{16\sqrt{66}L}{\sqrt{q_{\rm red}+32}}
 +4L\delta.
 \label{eq:reduced-master-full}
\end{align}

Since
\[
 K_{\rm red}
 =
 O((\log T)^2),
\]
the resource-sharing factor becomes
\[
 (4K_{\rm red})^{\nu_\theta}
 =
 O((\log T)^{2\nu_\theta}).
\]
Moreover, with $\delta=T^{-1}$,
\[
 B_{\rm red}=O(\log T),
 \qquad
 q_{\rm red}
 =
 \Omega\!\left(
 \frac{T}{(\log T)^3}
 \right),
\]
so the reduced certificate contribution is
\[
 O\!\left(
 \frac{L(\log T)^{3/2}}{\sqrt T}
 \right).
\]
Therefore, whenever
\[
 \Gamma_\Delta
 =
 \frac{\Delta_+}{\Delta}
 =
 O(1),
\]
the reduced portfolio preserves the same polynomial stationarity exponent and
improves the portfolio-sharing logarithmic factor from
\[
 O((\log T)^{3\nu_\theta})
 \quad\text{to}\quad
 O((\log T)^{2\nu_\theta}),
\]
with the gap-scale mismatch represented explicitly by
$\Gamma_\Delta^{\bar\theta}$.

\subsection{Finite-budget consequences for the empirical diagnostics}
\label{app:master:finite-budget-diagnostics}

The oracle inequalities expose two finite-budget mechanisms observed in
Appendix~\ref{app:experiments}. First, the portfolio may already contain a
strong realized candidate while the certification budget is still too small
to distinguish it reliably from nearby candidates. Second, reducing the
portfolio dimension reallocates more primitive calls to each candidate and
certificate, but introduces an explicit gap-scale mismatch. The following
results make these two effects precise.

\begin{proposition}[Finite-budget selection resolution]
\label{prop:selection-resolution}
Let $x_1,\ldots,x_K$ be the trained CARP-KL candidates and let
$\widehat x_T=x_{\widehat j}$ be selected by the fresh equal-radius
certificates of Appendix~\ref{app:certificate}. Then,
\begin{equation}
 \E S(\widehat x_T)
 \leq
 \E\!\left[\min_{1\leq j\leq K}S(x_j)\right]
 +
 \frac{16\sqrt{66}L}{\sqrt{q_T+32}}
 +4L\delta.
 \label{eq:best-realized-selection}
\end{equation}
Moreover, on the simultaneous certificate event, if $j_\star$ is any
minimizer of $S(x_j)$, then every candidate satisfying
\begin{equation}
 S(x_j)>S(x_{j_\star})+2r_{q_T}
 \label{eq:selection-exclusion}
\end{equation}
is excluded by the selector. In particular, if the best realized candidate
is unique and every other candidate is separated from it by more than
$2r_{q_T}$, then the selector recovers it exactly on that event.

Under the allocation in \eqref{eq:master-allocation}, if $m\geq2B_K$, then
\begin{equation}
 2r_{q_T}
 =
 O\!\left(
 L\sqrt{\frac{K\log(K/\delta)}{T}}
 \right).
 \label{eq:selection-resolution-scaling}
\end{equation}
Hence candidate availability and candidate recovery are distinct
finite-budget phenomena: a near-oracle candidate may already be present while
the available certification budget is insufficient to resolve its advantage.
\end{proposition}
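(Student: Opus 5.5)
The plan is to reuse the deterministic selection argument of Lemma~\ref{lem:master-selection}, but to apply it pathwise with a \emph{random} comparator index instead of a fixed one. On the simultaneous event $\mathcal E_{\rm cert}$, inequality \eqref{eq:master-equal-radius-selection} already gives
\[
 S(x_{\widehat j})\le \min_{1\le j\le K}S(x_j)+2r_{q_T}.
\]
The minimum is $\mathcal T$-measurable, since the candidates are. I then split $\E S(\widehat x_T)$ over $\mathcal E_{\rm cert}$ and its complement, exactly as in the proof of Lemma~\ref{lem:master-expectation-selection}. On the event I use the display above together with nonnegativity of $\min_j S(x_j)$. On the complement I use $S(x)\le4L$ from Lemma~\ref{lem:prox-localization}. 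Integrating $\Prob(\mathcal E_{\rm cert}^c\mid\mathcal T)\le\delta$ from Lemma~\ref{lem:certificate-simultaneous} then yields \eqref{eq:best-realized-selection}, with $2r_{q_T}=16\sqrt{66}L/\sqrt{q_T+32}$. No independence across candidates is needed, because everything is conditional on $\mathcal T$.

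For the exclusion claim I again argue on $\mathcal E_{\rm cert}$. Take $j$ with $S(x_j)>S(x_{j_\star})+2r_{q_T}$. The certificate intervals give
\[
 \widehat S_j+r_{q_T}\ge S(x_j)>S(x_{j_\star})+2r_{q_T}\ge \widehat S_{j_\star}+r_{q_T}.
\]
So $j$ strictly loses to $j_\star$ in the argmin and cannot be selected. Under the stated separation hypothesis, every $j\ne j_\star$ is excluded, and therefore $\widehat j=j_\star$.

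For the scaling \eqref{eq:selection-resolution-scaling}, the bound $m\ge2B_K$ gives $q_T=\lfloor m/B_K\rfloor\ge m/(2B_K)$. With $m=\lfloor T/(2K)\rfloor$ and $T\ge4K$, we also have $m\ge T/(4K)$ by the argument of Lemma~\ref{lem:master-budget}. Hence $q_T\ge T/(8KB_K)$. Since $B_K\le 8\log(K/\delta)+3=O(\log(K/\delta))$, using $\delta<1/2$ and $K\ge1$ so that $\log(K/\delta)\ge\log2$ absorbs the additive constant, it follows that $2r_{q_T}\le 16\sqrt{66}L\,q_T^{-1/2}=O\bigl(L\sqrt{K\log(K/\delta)/T}\bigr)$.

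I do not expect a genuine obstacle. The only delicate point is the first part: I must make sure the minimum over a random index is handled pathwise on the event, rather than by taking an infimum of expectations, which would give the weaker statement $\min_j\E S(x_j)$. Controlling the complement by the uniform bound, and not by any property of the selected index, is what makes the swap of $\E$ and $\min$ legitimate.
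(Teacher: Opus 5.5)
Your proposal is correct and follows essentially the same route as the paper. The paper writes the pathwise bound $S(\widehat x_T)\le\min_j S(x_j)+2r_{q_T}+4L\mathbf 1_{\mathcal E^c}$ and takes expectations, which is your split over $\mathcal E_{\rm cert}$ and its complement; it also uses the same certificate-interval chain for exclusion and the same estimate $q_T\ge T/(8KB_K)$ for the scaling, with your explicit $B_K\le 8\log(K/\delta)+3$ simply spelling out the paper's $B_K=O(\log(K/\delta))$ step.
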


\begin{proof}
Let $\mathcal E$ denote the simultaneous certificate event from
Appendix~\ref{app:certificate}. On $\mathcal E$,
Lemma~\ref{lem:master-selection} gives
\begin{equation}
 S(\widehat x_T)
 \leq
 \min_{1\leq j\leq K}S(x_j)+2r_{q_T}.
 \label{eq:selection-event-bound}
\end{equation}
On $\mathcal E^c$, Lemma~\ref{lem:prox-localization} gives
$S(\widehat x_T)\leq4L$. Therefore, pathwise,
\[
 S(\widehat x_T)
 \leq
 \min_j S(x_j)+2r_{q_T}
 +4L\mathbf 1_{\mathcal E^c}.
\]
Taking expectations and using
$\Prob(\mathcal E^c)\leq\delta$ yields
\[
 \E S(\widehat x_T)
 \leq
 \E\!\left[\min_jS(x_j)\right]
 +2r_{q_T}+4L\delta.
\]
Substituting the radius from
\eqref{eq:master-certificate-radius},
\[
 2r_{q_T}
 =
 \frac{16\sqrt{66}L}{\sqrt{q_T+32}},
\]
proves \eqref{eq:best-realized-selection}.
For the exclusion statement, fix
$j_\star\in\arg\min_j S(x_j)$.
On $\mathcal E$, simultaneous certificate validity gives
\[
 \widehat S_{j_\star}+r_{q_T}
 \leq
 S(x_{j_\star})+2r_{q_T},
\]
whereas for every $j$,
\[
 \widehat S_j+r_{q_T}
 \geq
 S(x_j).
\]
Consequently, if
$S(x_j)>S(x_{j_\star})+2r_{q_T}$, then
\[
 \widehat S_j+r_{q_T}
 >
 \widehat S_{j_\star}+r_{q_T},
\]
so candidate $j$ cannot minimize the selection score. If $j_\star$ is unique
and every other candidate satisfies this separation, $j_\star$ is therefore
selected.

It remains to identify the scale of the resolution. Since
$q_T=\lfloor m/B_K\rfloor$ and $m\geq2B_K$,
\[
 q_T
 \geq
 \frac{m}{2B_K}.
\]
Under \eqref{eq:master-allocation},
$m=\lfloor T/(2K)\rfloor$; in the regime $T\geq4K$ used by the master
theorem,
\[
 m\geq\frac{T}{4K}.
\]
Hence,
\[
 q_T
 \geq
 \frac{T}{8KB_K}.
\]
Since $B_K=O(\log(K/\delta))$,
\[
 2r_{q_T}
 =
 \frac{16\sqrt{66}L}{\sqrt{q_T+32}}
 =
 O\!\left(
 L\sqrt{\frac{K\log(K/\delta)}{T}}
 \right),
\]
which proves \eqref{eq:selection-resolution-scaling}.
\end{proof}

\begin{remark}
\label{rem:selection-resolution}
Proposition~\ref{prop:selection-resolution} is a finite-budget recovery
statement, not a claim that the selector must fail whenever candidate gaps are
smaller than $2r_{q_T}$. The threshold is sufficient for exclusion and exact
recovery on the simultaneous certificate event. Below that scale, the
certificate guarantee alone does not force the correct ranking. This is the
regime probed by the candidate-ranking diagnostics in
Appendix~\ref{app:experiments}.
\end{remark}

\begin{proposition}[Reduced-portfolio tradeoff]
\label{prop:reduced-tradeoff}
Fix a valid description $\kappa$ belonging to both the full and reduced
affordable comparator classes. Compare, at the same horizon $T$ and confidence
level $\delta$, the full- and reduced-portfolio upper bounds associated with
this same comparator. Write
\[
 K=K_T,
 \qquad
 K_{\rm red}=K_T^{\rm red},
\]
and let $\bar\theta$ denote the rounded exponent of the reduced neighbor.
If
\begin{equation}
 q_{\rm red}\geq q_T
 \qquad\text{and}\qquad
 \Gamma_\Delta^{\bar\theta}
 \leq
 \frac{C_{\rm grid}}{C_{\rm red}}
 \left(\frac{K}{K_{\rm red}}\right)^{\nu_\theta},
 \label{eq:reduced-bound-improvement-condition}
\end{equation}
then the reduced-portfolio upper bound associated with $\kappa$ is no larger
than the corresponding full-portfolio upper bound.

More generally, after removing the common failure term $4L\delta$, the two
bounds are
\begin{equation}
 \underbrace{
 C_{\rm red}\Gamma_\Delta^{\bar\theta}
 (4K_{\rm red})^{\nu_\theta}\mathcal R_T(\kappa)
 }_{\text{reduced optimization term}}
 +
 \underbrace{
 \frac{16\sqrt{66}L}{\sqrt{q_{\rm red}+32}}
 }_{\text{reduced certification term}}
 \label{eq:reduced-tradeoff-left}
\end{equation}
and
\begin{equation}
 \underbrace{
 C_{\rm grid}(4K)^{\nu_\theta}\mathcal R_T(\kappa)
 }_{\text{full optimization term}}
 +
 \underbrace{
 \frac{16\sqrt{66}L}{\sqrt{q_T+32}}
 }_{\text{full certification term}}.
 \label{eq:reduced-tradeoff-right}
\end{equation}
Thus, portfolio reduction trades the larger per-candidate optimization and
certification budgets created by a smaller portfolio against the explicit
gap-mismatch factor $\Gamma_\Delta^{\bar\theta}$.
\end{proposition}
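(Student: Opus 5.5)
The plan is to work directly from the two explicit upper bounds. First I will confirm that \eqref{eq:reduced-tradeoff-left} and \eqref{eq:reduced-tradeoff-right} really are the bounds the paper attaches to the fixed comparator $\kappa$. For the reduced portfolio, fix $\kappa$ in the reduced affordable class. The reduced-grid export \eqref{eq:reduced-master-candidate} gives the reduced neighbor $\widetilde\kappa$ an RMS bound of $C_{\rm red}\Gamma_\Delta^{\bar\theta}\mathcal R_{n_{\rm red}}(\kappa)$. I will pass to the first moment by Cauchy--Schwarz. Then I will apply Lemma~\ref{lem:master-expectation-selection} with radius $r_{q_{\rm red}}$, which adds $16\sqrt{66}L/\sqrt{q_{\rm red}+32}+4L\delta$. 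Finally I will apply Lemma~\ref{lem:master-budget-sharing}, valid because $T\ge 4K_{\rm red}$, to replace $\mathcal R_{n_{\rm red}}(\kappa)$ by $(4K_{\rm red})^{\nu_\theta}\mathcal R_T(\kappa)$. The full-portfolio bound for the same $\kappa$ comes from the proof of Theorem~\ref{thm:master}: I will use Theorem~\ref{thm:master-cap} before the infimum is taken, followed by \eqref{eq:master-fourK}. Both bounds contain the same $4L\delta$ term, which I will cancel. This establishes the ``more generally'' display.

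Next I will compare the two bounds term by term under \eqref{eq:reduced-bound-improvement-condition}. The certification terms are easy: $q\mapsto 16\sqrt{66}L/\sqrt{q+32}$ is decreasing, so $q_{\rm red}\ge q_T$ makes the reduced certification term no larger than the full one. For the optimization terms, I will multiply the second inequality in \eqref{eq:reduced-bound-improvement-condition} by $C_{\rm red}(4K_{\rm red})^{\nu_\theta}\mathcal R_T(\kappa)\ge0$. The right-hand side becomes $C_{\rm grid}(K/K_{\rm red})^{\nu_\theta}(4K_{\rm red})^{\nu_\theta}\mathcal R_T(\kappa)=C_{\rm grid}(4K)^{\nu_\theta}\mathcal R_T(\kappa)$, because the factor $4^{\nu_\theta}$ and the powers of $K_{\rm red}$ cancel. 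Adding the two termwise inequalities, and then adding back the common $4L\delta$, gives the claim.

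I expect the only real obstacle to be bookkeeping rather than analysis. The comparator must lie in both affordable classes at once, since the reduced class requires $\Delta\le\Delta_+$ and uses the cap $n_{\rm red}$. The budget thresholds $T\ge 4K_{\rm red}$ and $m_{\rm red}\ge B_{\rm red}$ must also hold, so that each bound is actually valid for this $\kappa$. Both points are hypotheses of the proposition, namely membership in both classes together with the standing assumptions inherited from Appendix~\ref{app:master:reduced}. I will state them explicitly and not re-derive them. The $\bar\theta$ used in the exponent of $\Gamma_\Delta$ is the same in both places, because the reduced neighbor uses the same $\Theta_T$ rounding as the full neighbor.
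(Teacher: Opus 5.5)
Your proposal is correct and follows essentially the same route as the paper. The paper likewise reads off the two fixed-comparator bounds from Theorem~\ref{thm:master} and \eqref{eq:reduced-master-full} and cancels the common $4L\delta$. It then compares the certification terms by monotonicity of $q\mapsto(q+32)^{-1/2}$, and the optimization terms by rewriting the second condition as $C_{\rm red}\Gamma_\Delta^{\bar\theta}(4K_{\rm red})^{\nu_\theta}\le C_{\rm grid}(4K)^{\nu_\theta}$. The only difference is that you rebuild those two displayed bounds from their constituent lemmas rather than citing them, which is harmless.
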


\begin{proof}
For the fixed comparator $\kappa$, Theorem~\ref{thm:master} gives the
full-portfolio bound
\begin{equation}
 \E S(\widehat x_T)
 \leq
 C_{\rm grid}(4K)^{\nu_\theta}\mathcal R_T(\kappa)
 +
 \frac{16\sqrt{66}L}{\sqrt{q_T+32}}
 +4L\delta.
 \label{eq:full-fixed-comparator-bound}
\end{equation}
The reduced master inequality \eqref{eq:reduced-master-full} gives
\begin{equation}
 \E S(\widehat x_T^{\rm red})
 \leq
 C_{\rm red}\Gamma_\Delta^{\bar\theta}
 (4K_{\rm red})^{\nu_\theta}\mathcal R_T(\kappa)
 +
 \frac{16\sqrt{66}L}{\sqrt{q_{\rm red}+32}}
 +4L\delta.
 \label{eq:reduced-fixed-comparator-bound}
\end{equation}

The second condition in
\eqref{eq:reduced-bound-improvement-condition} is equivalent to
\[
 C_{\rm red}\Gamma_\Delta^{\bar\theta}
 (4K_{\rm red})^{\nu_\theta}
 \leq
 C_{\rm grid}(4K)^{\nu_\theta},
\]
and therefore makes the reduced optimization term no larger than the full
optimization term. Since $q\mapsto(q+32)^{-1/2}$ is decreasing, the first
condition makes the reduced certification term no larger than its full
counterpart. The failure contribution $4L\delta$ is common to both bounds.
Adding the termwise comparisons proves the first claim, while
\eqref{eq:reduced-tradeoff-left}--\eqref{eq:reduced-tradeoff-right} display
the comparison before either sufficient condition is imposed.
\end{proof}

\begin{remark}[Interpretation of the reduced-portfolio comparison]
\label{rem:reduced-tradeoff}
Proposition~\ref{prop:reduced-tradeoff} compares theorem upper bounds for a
fixed valid comparator; it does not assert pointwise or empirical dominance
of CARP-2D over CARP-3D. Removing one portfolio coordinate decreases the
candidate count and reallocates the finite horizon, while the shared
conservative gap bound introduces
$\Gamma_\Delta^{\bar\theta}$. Which effect dominates can therefore depend on
the horizon, comparator, and problem constants, consistent with the mixed
finite-budget behavior reported in Appendix~\ref{app:experiments}.
\end{remark}
% END appendices/app_e_master

% BEGIN appendices/app_f_discussion
\section{Additional Discussion, Scope, and Extensions}
\label{app:discussion}

This appendix records the structural interpretation of the CARP-KL guarantee
and several directions that lie beyond the theorem proved here. The central
object is a finite-horizon oracle comparison over valid KL descriptions:
the method need not identify a unique geometric parameter, and the metric
being certified is exactly the Moreau stationarity measure appearing in the
main result.

\paragraph{Competition among nonunique descriptions.}
A KL triple is a valid geometric description rather than an identifiable
parameter of the objective. A single function may admit several triples
\[
  \kappa=(\theta,c,\Delta)
\]
whose stage scales, prefactors, and polynomial exponents lead to different
finite-horizon guarantees. These guarantees can cross as the horizon varies.
Theorem~\ref{thm:master} therefore competes with the complete finite-budget
expression of every valid affordable description instead of estimating a
distinguished ``true'' exponent. This description-based formulation is
intrinsic to the oracle inequality and allows the preferred comparator to
change with the available budget.

\paragraph{Oracle- and metric-aware comparison.}
Rates obtained under different information models need not be directly
comparable. Smooth or mean-square-smooth KL analyses can exploit regularity
beyond the primitive stochastic-subgradient oracle~\eqref{eq:oracle} and may
obtain stronger objective-gap guarantees
\citep{fatkhullin2022sharp}. Convex sharpness frameworks may use observable
objective progress or deterministic first-order information
\citep{roulet2017sharpness,renegar2022restart,adcock2025restarts}.
CARP-KL instead measures complexity in primitive stochastic-subgradient calls
and targets
\[
  S(x)=\|\nabla f_{1/(2\rho)}(x)\|.
\]
Its rate should therefore be interpreted relative to this oracle and this
stationarity metric.

\paragraph{Stationarity-aligned selection.}
The certification mechanism is deliberately aligned with the performance
criterion of the theorem. Selecting by
\[
  F(x)-f^\star
\]
would require access to both the Moreau envelope and the optimum value, while
selection by $f(x)$ would introduce objective-value access not present in the
primitive oracle and would not directly order Moreau stationarity. The
proximal-distance construction of Appendix~\ref{app:certificate} instead
produces an observable scalar certificate for $S(x)$ from fresh calls to the
same stochastic-subgradient oracle. Thus, optimization and validation operate
under one information model and target the same quantity.

\paragraph{Horizon-free adaptation.}
CARP-KL is stated for a known primitive-call horizon $T$, which determines the
grid resolution and the division of calls between training and certification.
A doubling construction provides a natural route to an anytime version, but
the epochs must jointly preserve one-sided grid coverage, freshness of the
certification samples, and global primitive-call accounting. Establishing an
anytime oracle inequality with these properties is a distinct extension of
the present finite-horizon construction.

\paragraph{General convex composite terms.}
The current model
\[
  f=g+\iota_X
\]
uses projection onto $X$ as the deterministic nonsmooth operation. This
structure enters both the projected stochastic-subgradient restart lemma and
the primitive proximal-distance certificate. Replacing $\iota_X$ by a
general proper closed convex term $h$ would require an explicit primitive
interface for the resulting composite proximal subproblems together with
corresponding restart and certification lemmas. The portfolio principle
remains applicable once such interfaces are available, but the required
composite analysis is separate from the theorem established here.

\paragraph{Adaptive overhead and lower bounds.}
The full portfolio uses
\[
  K_T=O((\log T)^3)
\]
candidates, while a shared conservative initial-gap bound reduces the
portfolio to
\[
  K_T^{\rm red}=O((\log T)^2).
\]
These constructions establish sufficient logarithmic overheads for preserving
the known-description polynomial stationarity exponent. They do not establish
the minimum possible cost of adaptation. Lower bounds for parameter adaptation
in stochastic convex optimization demonstrate that adaptation can carry a
nonconstant price in other settings \citep{carmon2024price}, but they do not
characterize the present problem of competition among unknown KL
descriptions. Determining the optimal portfolio, certification, and
information-theoretic overhead for this class remains an open problem.

\paragraph{Finite-budget certification as a design bottleneck.}
The oracle inequality separates the optimization contribution from the cost
of observable selection:
\[
  \E S(\widehat x_T)
  \leq
  \text{portfolio optimization term}
  +
  \text{certificate radius}
  +
  \text{failure contribution}.
\]
The experiments show why this decomposition matters at practical horizons.
Strong candidates can already be present in the portfolio while finite-sample
certification and budget sharing still affect which candidate is returned.
This observation does not alter the asymptotic polynomial exponent proved by
Theorem~\ref{thm:master}; rather, it identifies certified selection as an
important finite-budget component of adaptive KL optimization.

\paragraph{Outlook.}
The present results establish an executable form of KL adaptation in which
unknown and potentially nonunique geometric descriptions are converted into
a finite candidate portfolio, candidate stationarity is made observable using
the primitive oracle, and the complete optimization--selection procedure is
charged to one horizon. Natural next steps are to sharpen the adaptive
overhead, develop anytime and broader composite variants, and design more
sample-efficient certificates while retaining the same-oracle comparison
principle.
% END appendices/app_f_discussion

% BEGIN appendices/app_g_experiments
\section{Experimental Protocols and Diagnostics}
\label{app:experiments}

This appendix documents the complete experimental protocol underlying
Section~\ref{sec:experiments} and reports the diagnostics used to interpret
CARP-KL at finite horizons. The experiments serve three distinct purposes:
to verify the implementation and primitive-oracle accounting, to evaluate the
finite-budget behavior of certified adaptation, and to identify mechanisms
that are not visible from the asymptotic oracle inequality alone.

The complete pre-results specification and machine-readable configuration are
included in the reproducibility package. Experimental choices designated as
frozen below were fixed before the corresponding outcomes were examined.
Later diagnostics are identified explicitly and do not replace the earlier
criteria.

\subsection{Controlled problems and evaluation metric}
\label{app:experiments:problems}

All primary instances are one dimensional, with
\[
  X=[-0.4,0.4],
  \qquad
  x_0=0.4,
  \qquad
  \rho=1,
\]
and stochastic oracle
\begin{equation}
  G(x,\xi)=f'(x)+0.2\xi,
  \label{eq:experiment-oracle}
\end{equation}
where $\xi$ is an independent Rademacher variable.

We use
\[
  f_p(x)=\frac{|x|^p}{p},
  \qquad
  p\in\{2,3,4\},
\]
with KL exponents
\[
  \frac12,\qquad
  \frac23,\qquad
  \frac34,
\]
and the endpoint instance
\begin{equation}
 f_{\rm flat}(x)
 =
 \begin{cases}
   \exp(-1/x^2), & x\neq0,\\
   0, & x=0,
 \end{cases}
 \label{eq:flat-instance}
\end{equation}
with exponent $1$. The flat instance is convex on the chosen domain.
For each problem,
\[
  L=\max_{x\in X}|f'(x)|+0.2
\]
is an analytic uniform oracle-moment bound. Before stochastic evaluation, a
deterministic preflight calculation verifies that the declared parameter box
contains a valid KL description for each controlled problem.

\paragraph{Offline stationarity metric.}
For evaluation only, the unique proximal point is computed by bracketed
bisection to absolute tolerance $10^{-12}$, giving
\begin{equation}
  S(x)
  =
  2\rho
  \left|
  x-\prox_{f/(2\rho)}(x)
  \right|.
  \label{eq:offline-stationarity}
\end{equation}
This exact numerical evaluation is performed strictly after a method returns
its output and is never supplied to any optimization or selection procedure.

Unless stated otherwise, slope summaries are ordinary least-squares fits to
log median stationarity over the prespecified last four horizons.
Uncertainty intervals use $10{,}000$ paired seed-bootstrap resamples with
frozen seed $8675309$. The raw experimental records additionally retain
actual primitive calls, unused allocation, completed restart stages, and
projection counts.

\subsection{Frozen diagnostic protocol}
\label{app:experiments:frozen}

The declared CARP-KL ranges are
\[
  \theta\in[1/2,1],
  \qquad
  c\in[1/2,2],
  \qquad
  \Delta\in[1/4,1],
\]
with the grids of Appendix~\ref{app:grid} and
\[
  \delta=T^{-1}.
\]

The initial rate-tracking diagnostic uses
\[
  T=2^{15},\ldots,2^{21}
\]
and twenty training seeds $1729,\ldots,1748$. It compares CARP-KL with a
tuned ERdA receiving cap $T$, a tuned ERdA receiving the shared cap $n$, an
aggressively misspecified restart schedule, and no-restart projected
stochastic subgradient descent.

The joint-misspecification diagnostic uses all $3^3$ combinations of exponent
offsets
\[
  \{-\eta_T,0,+\eta_T\}
\]
and multipliers
\[
  \{1/2,1,2\}
\]
for both $c$ and $\Delta$, at $T=2^{20}$.

The certificate-calibration diagnostic uses $200$ fresh certificate
replicates at
\[
  q\in\{16,32,64,128,256,512,1024\}.
\]
The competitive benchmark and subsequent mechanism, robustness, and
extended-budget diagnostics were introduced only after these initial
protocols and their outcomes had been frozen.

\paragraph{Prespecified interpretation criteria.}
The rate-tracking criterion requires the mean absolute CARP-versus-shared-
oracle slope difference across the four instances to be at most $0.10$, no
individual difference to exceed $0.18$, and every CARP slope to be negative.
The misspecification-direction criterion requires at least $75\%$ of matched
conservative/aggressive cells to favor the conservative description in
median terminal stationarity, with no larger fallback frequency.
Certificate scaling requires a fitted median absolute-error slope in
$[-0.65,-0.35]$ for at least three instances and empirical coverage no more
than $0.05$ below nominal. The ranking-resolution criterion requires
selection regret at $q=1024$ to be no larger than at $q=16$ on every instance,
together with pooled Spearman correlation at most $-0.8$.

These thresholds are used only to interpret the frozen diagnostics. Seeds,
fit windows, endpoints, and thresholds are retained when a criterion is not
met.

\subsection{Initial diagnostic results}
\label{app:experiments:initial-results}

\paragraph{Rate tracking.}
The frozen run completed all $560$ problem--horizon--seed jobs and all
$2{,}800$ method records without method failure. The CARP-KL/shared-oracle
slope pairs were
\[
  (-0.552,-0.472)\quad\text{for P2},
\]
\[
  (-0.287,-0.271)\quad\text{for P3},
\]
\[
  (-0.646,-0.092)\quad\text{for P4},
\]
and
\[
  (2.025,0.011)\quad\text{for FLAT}.
\]
The mean absolute slope difference was $0.666$, exceeding the prespecified
$0.10$ threshold. Thus, the frozen experiment does not empirically confirm
rate tracking over these horizons.

A subsequent aggregation decomposition, reported separately from the frozen
criterion, shows that P4 nearly aligns under mean aggregation, with slope
difference $0.009$. FLAT remains numerically unstable under high-precision
offline evaluation. These observations characterize the finite-horizon
behavior but do not replace the original diagnostic.

\paragraph{Joint misspecification.}
The frozen run produced all $2{,}160$ expected records: $27$ nominal cells,
four instances, and twenty seeds. Under the prespecified extreme-pair
comparison, the conservative endpoint had no larger median terminal
stationarity on FLAT and P4, but not on P2 or P3. The resulting fraction was
$0.50$, below the $0.75$ criterion. Fallback frequency was zero for both
endpoints on all four instances.

The conservative/aggressive median stationarity pairs were approximately
\[
  (1.05,1.25)\times10^{-12}
  \qquad\text{for FLAT},
\]
\[
  (7.91\times10^{-4},8.16\times10^{-5})
  \qquad\text{for P2},
\]
\[
  (3.68\times10^{-4},3.17\times10^{-4})
  \qquad\text{for P3},
\]
and
\[
  (4.26\times10^{-5},3.71\times10^{-4})
  \qquad\text{for P4}.
\]
The complete $3^3$ response surfaces are nonmonotone and include intermediate
cells that outperform one or both extreme endpoints. The experiment therefore
shows that the one-sided theoretical comparison should not be interpreted as
a uniform finite-sample ordering of conservative and aggressive schedules.

\paragraph{Certificate calibration.}
All $5{,}600$ expected certificate records were obtained. The median
certificate-error slopes were
\[
  -0.5368\ \text{(P2)},\qquad
  -0.3882\ \text{(P3)},\qquad
  -0.4132\ \text{(P4)},\qquad
  -0.3853\ \text{(FLAT)}.
\]
All four lie in the prespecified certificate-scaling band. Marginal and
simultaneous empirical coverage were $1.000$ for every problem and every
tested block size. These results support the conservative finite-sample
scaling predicted by the certificate analysis.

\paragraph{Ranking resolution.}
The endpoint regret comparison between $q=16$ and $q=1024$ holds on all four
instances. Across the full block-size sweep, however, the pooled Spearman
correlation between block size and selection regret is only $-0.13494$, rather
than the prespecified value at most $-0.8$. Median regret is constant across
the grid for P2 and P4, nearly constant for FLAT, and decreases sharply only
at $q=1024$ for P3.

Thus, the certificate calibration and ranking diagnostics establish two
different empirical facts: the scalar certificate has conservative coverage
and the expected error scaling, while increased certificate resolution does
not produce uniformly monotone ranking improvement over the tested finite
budgets.

\subsection{Competitive benchmark}
\label{app:experiments:competitive}

After the initial protocols and outcomes were frozen, we introduced the
competitive benchmark reported in Section~\ref{sec:experiments}. Comparator
hyperparameters were selected using ten disjoint tuning seeds and fixed before
evaluation.

The benchmark uses twenty evaluation seeds
\[
  3101,\ldots,3120,
\]
seven horizons
\[
  \log_2T\in\{15,17,19,20,21,23,25\},
\]
and six methods:
CARP-KL, Oracle-KL, tuned SGD, logarithmic step-size search, AdaGrad-Norm, and
a source-policy-faithful Gao--Deng specialization. Its protocol-specific
bootstrap uses $10{,}000$ replicates with frozen seed $202710$.

Two independent full executions reproduce all $168$ problem--budget--method
cell medians and all $28$ canonical paired CARP/Oracle median log-ratios.
At $T=2^{20}$, the expanded sweep exactly reproduces the reference snapshot
in Table~\ref{tab:e5-competitive}.

\begin{table}[t]
\centering
\caption{Reference competitive snapshot at $T=2^{20}$ over twenty evaluation
seeds: median offline Moreau stationarity; lower is better.}
\label{tab:e5-competitive}
\small
\setlength{\tabcolsep}{5pt}
\begin{tabular}{lrrrrrr}
\toprule
Problem & CARP-KL & Oracle & SGD & LogSearch & AdaGrad & Gao--Deng \\
\midrule
P2
& $2.98\!\times\!10^{-3}$
& $1.13\!\times\!10^{-4}$
& $2.41\!\times\!10^{-3}$
& $9.42\!\times\!10^{-4}$
& $1.69\!\times\!10^{-3}$
& $3.29\!\times\!10^{-3}$ \\
P3
& $2.21\!\times\!10^{-3}$
& $1.31\!\times\!10^{-4}$
& $1.57\!\times\!10^{-3}$
& $4.88\!\times\!10^{-4}$
& $1.62\!\times\!10^{-4}$
& $8.15\!\times\!10^{-4}$ \\
P4
& $6.55\!\times\!10^{-4}$
& $6.69\!\times\!10^{-5}$
& $3.66\!\times\!10^{-4}$
& $5.14\!\times\!10^{-4}$
& $1.31\!\times\!10^{-4}$
& $1.85\!\times\!10^{-4}$ \\
FLAT
& $5.70\!\times\!10^{-7}$
& $1.14\!\times\!10^{-12}$
& $1.13\!\times\!10^{-12}$
& $1.37\!\times\!10^{-5}$
& $1.18\!\times\!10^{-11}$
& $2.93\!\times\!10^{-11}$ \\
\bottomrule
\end{tabular}
\end{table}

The benchmark is intentionally interpreted as a finite-budget comparison
rather than an empirical theorem verification. CARP-KL is not uniformly best
among the six methods; the experiment quantifies the realized cost of
portfolio sharing and certified selection under the primitive-call accounting
used by the theory.

\subsection{Expanded budget sweep and adaptation gap}
\label{app:experiments:gap}

The complete competitive experiment evaluates all six methods at
\[
  \log_2T\in\{15,17,19,20,21,23,25\}.
\]
CARP-KL displays a pronounced small-budget transition between $2^{15}$ and
$2^{17}$. From $2^{19}$ through $2^{25}$, its median absolute stationarity
decreases monotonically on P2 and P3. P4 improves overall with a local
deterioration at $2^{21}$, while FLAT is strongly influenced by numerical
floors.

For the prespecified seed-paired natural-log diagnostic,
\begin{equation}
 G_p(T)
 =
 \operatorname{median}_s
 \left[
 \log
 \frac{
 S_{\rm CARP,p,s}(T)
 }{
 S_{\rm Oracle,p,s}(T)
 }
 \right],
 \label{eq:paired-gap}
\end{equation}
the canonical trajectory is

\begin{center}
\small
\setlength{\tabcolsep}{5pt}
\begin{tabular}{crrrr}
\toprule
$\log_2T$ & P2 & P3 & P4 & FLAT \\
\midrule
15 & $-0.841$ & $-3.334$ & $-6.452$ & $-6.180$ \\
17 & $ 2.294$ & $ 4.001$ & $ 5.542$ & $22.707$ \\
19 & $ 2.233$ & $ 3.035$ & $ 1.624$ & $ 0.758$ \\
20 & $ 2.770$ & $ 3.138$ & $ 2.073$ & $12.664$ \\
21 & $ 2.160$ & $ 3.799$ & $ 2.850$ & $ 0.988$ \\
23 & $ 2.867$ & $ 3.248$ & $ 2.807$ & $ 8.153$ \\
25 & $ 3.149$ & $ 3.223$ & $ 2.242$ & $ 2.220$ \\
\bottomrule
\end{tabular}
\end{center}

The paired gap remains substantial and does not uniformly contract over the
tested range. FLAT ratios require particular caution because several methods
approach the offline numerical floor. This observation is compatible with the
oracle inequality: Theorem~\ref{thm:master} preserves the polynomial
stationarity exponent up to logarithmic factors and constants; it does not
predict that the finite-horizon CARP/Oracle ratio converges monotonically to
one over the tested budgets.

\begin{figure}[t]
\centering
\includegraphics[width=.78\textwidth]
{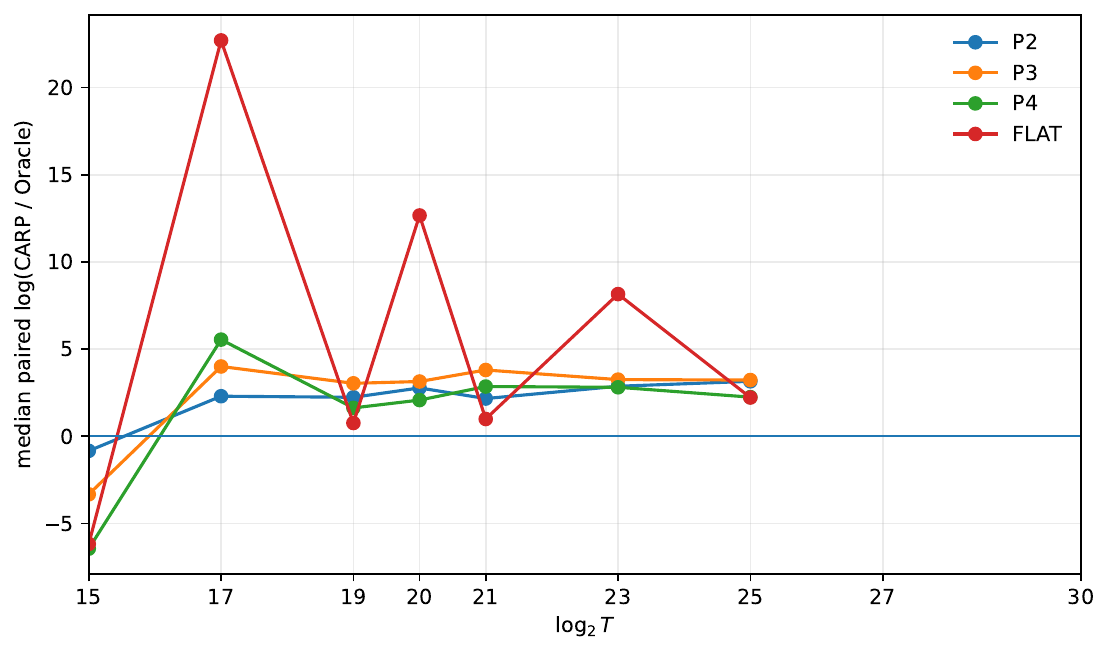}
\caption{\textbf{Paired CARP/Oracle adaptation gap across the competitive
budget sweep.}
Median seed-paired natural-log stationarity ratio over the seven frozen
budgets. Positive values favor Oracle-KL. FLAT is floor-sensitive.}
\label{fig:e5-gap-app}
\end{figure}

\subsection{Reduced-portfolio diagnostic}
\label{app:reduced-experiment}

Corollary~\ref{cor:reduced} predicts a structural tradeoff when a common
conservative gap bound is available: removing the gap-grid coordinate reduces
the number of candidates and increases the optimization and certification
budget available to each remaining schedule, while introducing the explicit
scale-mismatch factor $\Gamma_\Delta^{\bar\theta}.$

To isolate this mechanism, we compare two implementations of CARP-KL.
We call the full three-dimensional portfolio over
\((\theta,c,\Delta)\) \emph{CARP-3D}, and the reduced two-dimensional
portfolio over \((\theta,c)\), with a shared conservative gap bound
\(\Delta_+\), \emph{CARP-2D}. Thus CARP-3D independently discretizes all
three coordinates, whereas CARP-2D removes the gap coordinate from the
portfolio and uses the same value \(\Delta_+\) for every candidate.

We evaluate CARP-3D and CARP-2D on the same four controlled problems,
all twenty frozen evaluation seeds, and
\[
  \log_2 T \in \{15,17,19,20\}.
\]
For CARP-2D, we set
\[
  \Delta_+ = 1,
\]
while retaining the same restart construction, certification procedure,
selection rule, and total primitive-call accounting as CARP-3D. In the
controlled comparison, the valid comparator has
\[
  \Delta=\frac14,
  \qquad
  \Gamma_\Delta
  =\frac{\Delta_+}{\Delta}
  =4.
\]
At $T=2^{20}$, the full grid contains $81$ candidates and the reduced grid
contains $27$. Their resource allocations are
\[
  n_{\rm 3D}=6472,
  \qquad
  q_{\rm 3D}=43,
\]
and
\[
  n_{\rm 2D}=19418,
  \qquad
  q_{\rm 2D}=139.
\]
Thus, the reduced construction directly reallocates the fixed horizon from
portfolio breadth to per-candidate optimization and certification.

The corresponding median offline stationarity values are

\begin{center}
\begin{tabular}{lrr}
\toprule
Problem & CARP-3D & CARP-2D \\
\midrule
P2   & $2.98\times10^{-3}$ & $2.33\times10^{-3}$ \\
P3   & $2.21\times10^{-3}$ & $9.66\times10^{-4}$ \\
P4   & $6.55\times10^{-4}$ & $5.84\times10^{-4}$ \\
FLAT & $5.70\times10^{-7}$ & $1.13\times10^{-6}$ \\
\bottomrule
\end{tabular}
\end{center}

The effect varies with the budget. At $2^{15}$, CARP-2D has larger median
stationarity on all four problems. At $2^{17}$, it substantially improves P3,
P4, and FLAT relative to the CARP-3D small-budget transition. The picture is
mixed at $2^{19}$, while at $2^{20}$ the marginal medians improve on P2--P4
and deteriorate on FLAT. Seed-paired comparisons are likewise mixed.

The diagnostic therefore exhibits the resource-reallocation mechanism
predicted by the reduced construction. Proposition~\ref{prop:reduced-tradeoff}
formalizes the corresponding finite-budget competition between the smaller
portfolio-sharing and certificate terms and the explicit gap-mismatch factor.
The realized outcome also depends on candidate quality and finite-sample
selection.

\subsection{Mechanism diagnostics}
\label{app:mechanism-ablations}

The competitive results motivate a direct examination of the three components
that determine finite-budget adaptation:
\[
  \text{candidate availability},
  \qquad
  \text{certificate resolution},
  \qquad
  \text{candidate selection}.
\]
The diagnostics below evaluate these components using offline information
only after the CARP-KL run has completed.

\paragraph{Strong candidates are present in the realized portfolio.}
Exact deterministic replay evaluates every realized candidate using the
offline stationarity metric \eqref{eq:offline-stationarity}. At the frozen
$T=2^{22}$ mechanism endpoint, the best realized candidate has a negative
median paired log-stationarity ratio relative to Oracle-KL on all four
controlled problems. Thus, the realized CARP-KL portfolio contains candidates
whose post-return stationarity is competitive with the oracle comparator.

The selected candidates need not coincide with these best realized
candidates. Their median true ranks are
\[
  43/90\quad\text{(P2)},\qquad
  59.5/90\quad\text{(P3)},\qquad
  54/90\quad\text{(P4)},\qquad
  38/90\quad\text{(FLAT)}.
\]
The finite-budget gap therefore cannot be attributed solely to an absence of
strong candidates in the realized portfolio.

\paragraph{Certificate validity and ranking resolution are distinct.}
The certificate-calibration experiment shows conservative empirical coverage
and the predicted order of error decay. Exact replay nevertheless shows
substantial selection regret, and increasing the certificate block size does
not yield uniformly monotone ranking improvement. A certificate can therefore
be statistically valid while still being too coarse to reliably distinguish
nearby candidate stationarity levels under a fixed finite budget.
Proposition~\ref{prop:selection-resolution} makes this distinction explicit:
the selected point tracks the best realized candidate only up to the
finite-budget certificate resolution.

\paragraph{Selection interventions exhibit regime dependence.}
Prospectively specified selector interventions can improve candidate recovery
in some regimes and deteriorate it in others. A subsequent
candidate-generation $\times$ restriction $\times$ selector factorial
diagnostic reaches the same conclusion: fresh validation or stabilization
does not yield a uniformly better selector across the controlled problems.
The complete set of positive and negative cells is retained in the
reproducibility package.

\paragraph{Certification competes directly with candidate optimization.}
At $T=2^{22}$, certification consumes approximately half of the
primitive-call horizon, candidate optimization receives approximately
$35$--$37\%$, and integer rounding leaves approximately $13$--$15\%$ unused.
Additional validation can therefore improve ranking resolution while reducing
the calls available to construct each candidate. This is precisely the
finite-budget tradeoff exposed by the additive certificate term and the
portfolio-sharing factor in Theorem~\ref{thm:master}.

\paragraph{Mechanism interpretation.}
Taken together, the replay and ablation results identify certified selection
as a principal \emph{observed} finite-budget bottleneck of the current
construction once strong candidates are available. This statement is
mechanistic rather than causal: the CARP/Oracle gap also reflects portfolio
sharing, finite candidate budgets, rounding, and problem-dependent constants.
The experiments establish that candidate availability alone does not explain
the observed gap and that the tested selector modifications do not provide a
uniform replacement.

\subsection{Finite-second-moment robustness}
\label{app:experiments:heavy-tail}

The theoretical oracle model requires a finite uniform second moment rather
than sub-Gaussian noise. To probe this distinction, we evaluate the frozen
methods under centered variance-one Rademacher noise and standardized
Student-$t$ noise with
$
  5, 3, 2.25
$
degrees of freedom. These distributions remain within the finite-second-
moment regime.

CARP-KL remains operational on P2--P4 across the tested noise families without
systematic catastrophic degradation. FLAT remains sensitive to numerical
floors. Several practical comparators continue to attain lower stationarity
on substantial parts of the experiment, so the diagnostic supports
operational robustness under the stated moment model rather than a
comparative heavy-tail advantage.

% END appendices/app_g_experiments

\end{document}